\newtheorem{theorem}{Theorem}
\newtheorem{lemma}{Lemma}
\theoremstyle{definition}
\newtheorem{rem}{Remark}
\numberwithin{equation}{section}
\numberwithin{theorem}{section}
\numberwithin{lemma}{section}
 \numberwithin{rem}{section}
\begin{document}

\title[Modular Forms and $k$-colored Generalized Frobenius Partitions]
 {Modular Forms and $k$-colored Generalized Frobenius Partitions}

\author[Heng Huat Chan, Liuquan Wang, and Yifan Yang]{Heng Huat Chan, Liuquan Wang *, and Yifan Yang}
\address{Main address: Department of Mathematics, National University of Singapore,
Block S17, 10 Lower Kent Ridge Road,
Singapore 119076, Singapore}
\address{Temporary address: Fakult\"at f\"ur Mathematik, Universit\"at Wien,
Oskar-Morgenstern-Platz~1, A-1090 Vienna, Austria}
\email{matchh@nus.edu.sg}

\address{School of Mathematics and Statistics, Wuhan University, Wuhan 430072, Hubei, People's Republic of China}
\email{mathlqwang@163.com;wangliuquan@u.nus.edu;wanglq@whu.edu.cn}

\address{Department of Mathematics, National Taiwan University, Taipei, Taiwan 10617}
\email{yangyifan@ntu.edu.tw}

\subjclass[2010]{Primary 05A17, 11F11; Secondary 11P83, 11F03, 11F33}

\keywords{Generalized Frobenius partitions; generating functions; congruences; theta functions}

\dedicatory{Dedicated to Professor George E. Andrews on the occasion of his 80th birthday}
\thanks{* Corresponding author.}
\thanks{We corrected some typos appeared in the published version \cite{CWY-TAMS}. See the last section for details.}

\maketitle

\begin{abstract}
Let $k$ and $n$ be positive integers.
Let $c\phi_{k}(n)$ denote the number of $k$-colored generalized Frobenius partitions of $n$ and
$\mathrm{C}\Phi_k(q)$ be the generating function
of $c\phi_{k}(n)$. In this article, we study $\mathrm{C}\Phi_k(q)$  using the theory of modular forms and
discover new surprising properties of $\mathrm{C}\Phi_k(q)$.
\end{abstract}

\section{Introduction}

A partition $\pi$ of an integer $n$ is a sequence of non-increasing positive integers which add up to $n$. We denote the number of partitions of $n$ by $p(n)$. It is known that a partition $\pi$ of $n$  can be visualized using a Ferrers diagram by representing the positive integer $m$ of the $s$-th part by  $m$ dots on the $s$-th row. An example showing the pictorial representation of the partition
$4+4+4+2$ of the integer 14 is given in Figure \ref{fig1}.
\medskip

\begin{figure}[h]
\begin{center}
 \begin{tikzpicture}[scale=0.6]
 \filldraw (0,0) circle (3pt);
 \filldraw (1,0) circle (3pt);
 \filldraw (2,0) circle (3pt);
 \filldraw (3,0) circle (3pt);
 \filldraw (0,-1) circle (3pt);
 \filldraw (1,-1) circle (3pt);
  \filldraw (2,-1) circle (3pt);
  \filldraw (3,-1) circle (3pt);
   \filldraw (0,-2) circle (3pt);
 \filldraw (1,-2) circle (3pt);
  \filldraw (2,-2) circle (3pt);
  \filldraw (3,-2) circle (3pt);
    \filldraw (0,-3) circle (3pt);
 \filldraw (1,-3) circle (3pt);
\end{tikzpicture}
\end{center}
\caption{}\label{fig1}
\end{figure}
\medskip

From the Ferrers diagram of a partition, we can construct a $2$ by $d$ matrix by carrying out the following steps:
\begin{enumerate}[Step 1.]
\item Remove all the dots lying on the diagonal of the diagram.
\item Fill the first row of the matrix with entries $r_{1,j}$, where $r_{1,j}$ is the number of dots on the
$j$-th row that are to the right of the diagonal.
\item  Fill the second row of the matrix with entries $r_{2,j}$, where $r_{2,j}$ is the number of dots on the
$j$-th column that are below the diagonal.
\end{enumerate}
For example,
after Step 1, we obtained Figure \ref{fig2} from Figure \ref{fig1}.
\begin{figure}[h]
\begin{center}
 \begin{tikzpicture}[scale=0.6]
\draw[gray,very thin] (0,0) circle (3pt);
 \filldraw (1,0) circle (3pt);
 \filldraw (2,0) circle (3pt);
 \filldraw (3,0) circle (3pt);
 \filldraw (0,-1) circle (3pt);
\draw[gray,very thin] (1,-1) circle (3pt);
  \filldraw (2,-1) circle (3pt);
    \filldraw (3,-1) circle (3pt);
   \filldraw (0,-2) circle (3pt);
 \filldraw (1,-2) circle (3pt);
\draw[gray,very thin] (2,-2) circle (3pt);
\filldraw (3,-2) circle (3pt);
    \filldraw (0,-3) circle (3pt);
 \filldraw (1,-3) circle (3pt);
\end{tikzpicture}
\end{center}
\caption{}\label{fig2}
\end{figure}
Carrying out Steps 2 and 3, we arrive at the matrix
 $$\begin{pmatrix}
3 & 2 & 1 \\
3 &2 & 0
\end{pmatrix}.$$

It is clear that we can always construct a $2$ by $d$ matrix from any partition $\pi$ with $d$ dots along the diagonal of its Ferrers diagram
 and the matrix obtained  from a partition $\pi$ using the above procedures is
called a Frobenius symbol for the partition $\pi$. A Frobenius symbol, by construction, has
strictly decreasing entries on each row.

One way to find new functions that are similar to the partition function $p(n)$ is to start with a modified version of the Frobenius symbol.
In his 1984 AMS Memoir,  G.E. Andrews \cite[Section 4]{Andrews} introduced a generalized Frobenius symbol with at most $k$ repetitions for each integer by relaxing the ``strictly decreasing'' property and allowing at most $k$-repetitions of each positive integer in each row.
Andrews then used the generalized Frobenius symbol to define the generalized Frobenius partition of $n$. For a generalized
Frobenius symbol with entries $r_{i,j}, i=1,2, 1\leq j\leq d$, the generalized Frobenius partition of $n$ is
given by
$$n=d+\sum_{j=1}^{d}(r_{1,j}+r_{2,j}).$$ Andrews used the symbol $\phi_k(n)$ to denote
the number of such partitions of $n$.
As an example, we observe that $\phi_2(3)=5$ and these are given by the following generalized Frobenius  symbols with at most $2$ repetitions on each row:
 $$\begin{pmatrix} 2  \\ 0 \end{pmatrix},\quad \begin{pmatrix} 0  \\ 2  \end{pmatrix}, \quad \begin{pmatrix} 1 \\ 1 \end{pmatrix},
 \quad \begin{pmatrix} 1 & 0   \\ 0 & 0  \end{pmatrix},\quad \begin{pmatrix} 0 & 0   \\ 1 & 0  \end{pmatrix}.$$
Note that with this definition, $$\phi_1(n) = p(n).$$

There are at most $k$-repetitions in each row of a generalized Frobenius symbol. In order to restore
the ``strictly decreasing'' property of a Frobenius symbol from a generalized Frobenius symbol, Andrews colored
the repeated parts using ``colors'' denoted by $1,2,\cdots, k$ and imposed an ordering on these parts as follows:
\begin{equation}\label{new-ordering}0_1\prec 0_2\prec \cdots \prec 0_k\prec 1_1\prec 1_2\prec \cdots \prec 1_k \prec 2_1\prec 2_2\prec \cdots \prec 2_k\prec \cdots.\end{equation}
Here, we use ``$\prec$'' to differentiate the inequality from the usual inequality ``$<$''.
Andrews referred to a symbol obtained using $k$-colors in this way as a {$k$-colored} generalized Frobenius symbol.

Given a $k$-colored generalized Frobenius symbol
with entries
$$r_{i,j}\in \{\ell_c| \text{$\ell$ and $c$ are non-negative integers with $1\leq c\leq k$}\}$$
and
$$r_{i,j+1}\prec r_{i,j}, i=1,2\quad\text{and}\quad 1\leq j\leq d-1.$$
Andrews associated a $k$-colored generalized Frobenius partition of $n$ to a $k$-colored generalized Frobeinus symbol $(r_{i,j})_{2\times d}$  by setting
$$n=d+\sum_{j=1}^{d}(r_{1,j}+r_{2,j}),$$ where only the non-negative integer $\ell$ is added if $r_{i,j}=\ell_c.$
He used the symbol  $c\phi_k(n)$ to denote the number of such partitions of $n$.
Observe that when $k=1$, the $1$-colored generalized Frobenius symbols coincide with the Frobenius symbols and $c\phi_1(n)=p(n)$.
To help the reader understand $k$-colored generalized Frobenius symbols, we list the following $2$-colored generalized
Frobenius symbols which give rise to $2$-colored generalized Frobenius partitions of 2:
 \begin{align}\label{2-color-2}&\begin{pmatrix} 1_1\\  0_1 \end{pmatrix},\quad \begin{pmatrix} 1_1  \\ 0_2  \end{pmatrix},\quad \begin{pmatrix} 1_2 \\ 0_1 \end{pmatrix},\quad \begin{pmatrix} 1_2 \\ 0_2 \end{pmatrix},\\
 &\begin{pmatrix} 0_1 \\  1_1 \end{pmatrix},\quad \begin{pmatrix} 0_2 \\  1_1  \end{pmatrix},\quad \begin{pmatrix} 0_1 \\  1_2 \end{pmatrix},\quad \begin{pmatrix} 0_2 \\  1_2  \end{pmatrix},\quad \begin{pmatrix} 0_2  &  0_1 \\  0_2 & 0_1 \end{pmatrix}.\notag\end{align}
Note that there are altogether nine $2$-colored generalized Frobenius partitions of $2$ and hence, $$c\phi_2(2)=9.$$

The best way to study a new function such as the $k$-colored generalized Frobenius partition function $c\phi_k(n)$ is to study its generating function
$$\mathrm{C}\Phi_k(q):=\sum_{n=0}^\infty c\phi_k(n)q^n.$$
In \cite[Theorem 5.2]{Andrews},
Andrews showed that
\begin{equation}\label{notation}
\mathrm{C}\Phi_k(q)=\frac{1}{(q;q)_{\infty}^{k}}\sum\limits_{m_1,\cdots ,m_{k-1}\in \mathbf{Z}}q^{Q(m_1,\cdots, m_{k-1})},
\end{equation}
where
\begin{equation}\label{Q}Q(m_1,m_2,\cdots, m_{k-1})=\sum\limits_{i=1}^{k-1}m_{i}^{2}+\sum\limits_{1\le i<j\le k-1}m_{i}m_{j}\end{equation}
and
\[(a;q)_\infty = \prod_{j=1}^\infty \left(1-aq^{j-1}\right), \quad |q|<1.\]
Using (\ref{notation}), Andrews \cite[Corollary 5.2]{Andrews} discovered alternative expressions for $\mathrm{C}\Phi_{k}(q)$ when $k=2,3$ and $5$.
To describe Andrews' identities, let $q=e^{2\pi i \tau}$  throughout this paper,
\begin{align*}
\Theta_3(q)=\vartheta_3(0|2\tau)=\sum_{j=-\infty}^{\infty} q^{j^2}, \,\text{and}\, \Theta_2(q)=\vartheta_2(0|2\tau)=\sum_{j=-\infty}^{\infty}q^{(j+1/2)^2},
\end{align*}
where
$$\vartheta_2(u|\tau) = \sum_{j=-\infty}^\infty e^{\pi  i \tau(j+1/2)^2}e^{(2j+1)iu}$$
and
$$\vartheta_3(u|\tau) = \sum_{j=-\infty}^\infty e^{\pi i \tau j^2}e^{2ji u}.$$

Andrews showed that
\begin{align}\label{c-phi-2}
\mathrm{C}\Phi_{2}(q)&=\frac{(q^2;q^4)_{\infty}}{(q;q^2)_{\infty}^{4}(q^4;q^4)_{\infty}},\\
 \label{c-phi-3}
\mathrm{C}\Phi_{3}(q)&=\frac{1}{(q;q)_{\infty}^{3}}\left(\Theta_3(q)\Theta_3(q^3)+\Theta_2(q)\Theta_2(q^3) \right)\\
&=\frac{1}{(q;q)_\infty^3}\left(1+6\sum_{j=0}^\infty \left(\frac{j}{3}\right)\frac{q^j}{1-q^j}\right)\label{c-phi-3-2}
\intertext{and}
\mathrm{C}\Phi_{5}(q)&=\frac{1}{(q;q)_\infty^5}\left(1+25\sum_{j=1}^\infty \left(\frac{j}{5}\right)\frac{q^j}{(1-q^j)^2}-5\sum_{j=1}^\infty \left(\frac{j}{5}\right) \frac{jq^j}{1-q^j}\right)   \label{c-phi-5-2}
\end{align}
where $\left(\dfrac{j}{\cdot}\right)$ is the Kronecker symbol.
For \eqref{c-phi-5-2}, we have recorded the equivalent version of Andrews' identity found in the work of L.W. Kolitsch \cite[Lemma 1]{Kolitsch1989}.
Andrews \cite[pp. 13--15]{Andrews} used Jacobi triple product identity (see for example \cite[(3.1)]{Andrews}) and properties of theta series to prove \eqref{c-phi-2} and \eqref{c-phi-3}. The proofs of  \eqref{c-phi-3-2} and \eqref{c-phi-5-2}
\cite[pp. 26--27]{Andrews} are dependent on the work of H.D. Kloosterman \cite[p. 362, p. 358]{Kloosterman}.
In a paragraph before the proofs of \eqref{c-phi-3-2} and \eqref{c-phi-5-2}, Andrews \cite[p. 26]{Andrews} mentioned that similar identity exists for $k=7$, but this identity was not given in \cite{Andrews}. This missing identity, namely,
\begin{equation}\label{c-phi-7-Ei}
\mathrm{C}\Phi_{7}(q)=\frac{1}{(q;q)_{\infty}^{7}}\left(1+\frac{343}{8}\sum_{j=1}^\infty \left(\frac{j}{7}\right)
\frac{q^j+q^{2j}}{(1-q^j)^3}-\frac{7}{8}\sum_{j=1}^\infty \left(\frac{j}{7}\right)\frac{j^2q^j}{1-q^j}\right),
\end{equation}
was later published by Kolitsch \cite[Lemma 2]{Kolitsch1989}.

Recently,  N.D. Baruah and B.K. Sarmah \cite{BaruahDisc,Baruah-Rama} used the method illustrated in
 Z. Cao's work \cite{Cao} and  found representations of $\mathrm{C}\Phi_{k}(q)$ for $k=4,5$ and $6$. They showed that
\begin{equation}\label{c-phi-4}
\mathrm{C}\Phi_{4}(q)=\frac{1}{(q;q)_{\infty}^{4}}\left(\Theta_3^3(q^2)+3\Theta_3(q^2)\Theta_2^2(q^2) \right),
\end{equation}
\begin{align}\label{c-phi-5}
\mathrm{C}\Phi_{5}(q)=\frac{1}{(q;q)_{\infty}^{5}}\Big(&\Theta_3(q^{10})\Theta_3^{3}(q^2)+3\Theta_3(q^{10})\Theta_3(q^2)\Theta_2^2(q^2)+\frac{1}{2}
\Theta_2(q^{5/2})\Theta_2^3(q^{1/2})\notag \\
&+3\Theta_2(q^{10})\Theta_2(q^2)\Theta_3^{2}(q)+\Theta_2(q^{10})\Theta_2^{3}(q^2)  \Big)
\end{align}
and
\begin{align}\label{c-phi-6}
\mathrm{C}\Phi_{6}(q)=&\frac{1}{(q;q)_{\infty}^{6}}\Big(
\Theta_3^3(q)\Theta_3(q^2)\Theta_3(q^6)+\frac{3}{4}\Theta_2^3(q^{1/2})\Theta_2(q)\Theta_2(q^{3/2})\nonumber\\
&\quad +\Theta_3^2(q)\Theta_2(q^2)\Theta_2(q^6)\Big).
\end{align}
Identities \eqref{c-phi-4} and \eqref{c-phi-5} can be found in \cite[(2.2)]{BaruahDisc} and \cite[(2.13)]{BaruahDisc} respectively while
\eqref{c-phi-6} can be found in \cite[(2.1)]{Baruah-Rama}.

For $k>7$, it is not clear if new identities associated with $\mathrm{C}\Phi_{k}(q)$ could be derived using
the methods of Andrews and Baruah-Sarmah.
In fact, Andrews \cite[p.\ 15]{Andrews} commented that as $k$ increases, ``the expressions quickly become long and messy''. The main goal of this paper is to discuss ways of finding new representations of $\mathrm{C}\Phi_{k}(q)$. Using the theory of modular forms, we will derive all the identities mentioned above. In addition to providing new proofs to known identities, we will also construct
new representations for $\mathrm{C}\Phi_{k}(q)$ for the first time for $8\leq k\leq 17$.
In Section 2, we discuss the behavior of $\mathrm{C}\Phi_k(q)$ as modular form for each integer $k>2$.
In Section 3, we derive alternative representations of
$\mathrm{C}\Phi_k(q)$ for primes $k=3,5,7,11,13$ and $17$ and prove
Kolitsch's identities \cite[p. 223]{Kolitsch1989}
\begin{align}\label{Kolitsch5}
c\phi_5(n) &=p(n/5)+5 p(5n-1)\intertext{and}
c\phi_7(n) &= p(n/7)+7p(7n-2).\label{Kolitsch7}
\end{align}
We also
discover and prove the identities
\begin{align}\label{Analogue11}
c\phi_{11}(n) &= p(n/11)+11p(11n-5), \intertext{and}
c\phi_{13}(n) &= p(n/13)+13 p(13n-7) + 26 a(n)\label{Analogue13}
\end{align}
where $p(x)=0$ when $x$ is not an integer and
$$q\prod_{j=1}^\infty \frac{(1-q^{13j})}{(1-q^j)^2} = \sum_{j=0}^\infty a(j) q^j.$$
It turns out that \eqref{Analogue11} is equivalent to Kolitsch's identity for 11-colored generalized Frobenius partition with order 11
\cite[Theorem 3]{Kolitsch1991} which was
first established using the results of F.G. Garvan, D. Kim and D. Stanton \cite{Garvan-Kim-Stanton}. Identity \eqref{Analogue13}, on the other hand, is new.
The proof of \eqref{Analogue13} motivates the discovery of a uniform method in treating identities
such as \eqref{Analogue13}. We discuss this method in Section 4 and derive analogues of
 \eqref{Analogue13} for $\ell=17, 19$ and 23. This method also leads to the discovery of interesting modular functions
that satisfy mysterious congruences. For example, if
$$h_\ell(\tau) = (q^\ell;q^\ell)_\infty\mathrm{C}\Phi_\ell(q)-
1-\ell (q^\ell;q^\ell)_\infty\sum_{j=1}^\infty p\left(\ell j -\frac{\ell^2-1}{24}\right)q^j
-2\ell^{(\ell-11)/2}\frac{\eta^{\ell-11}(\ell\tau)}{\eta^{\ell-11}(\tau)},$$
where $\eta(\tau)$ is the Dedekind eta function given by
\[\eta(\tau)=q^{\frac{1}{24}}\prod_{n=1}^{\infty}(1-q^n),\]
then for $\ell=17,19$ and 23,
$$h_\ell(\tau) \equiv 0 \,\,\pmod{\nu_\ell}$$
where $$\nu_\ell=\ell^2-\ell p\left(\ell n -\frac{\ell^2-1}{24}\right).$$

In Section 5, we discuss the cases for  $k=9$ and 15, the two composite odd integers less than 17.
We derive the following  congruence satisfied by $c\phi_k(n)$:
\begin{equation}\label{main-cong-power} c\phi_{p^\alpha N}(n)\equiv c\phi_{p^{\alpha-1}N}(n/p)\pmod{p^{2\alpha}},\end{equation}
where $c\phi_k(m)=0$ if $m$ is not an integer, $p$ a prime, $N$ and $\alpha$ are positive integers with $(N,p)=1$.
The discovery of congruence \eqref{main-cong-power} is motivated
 by congruences found in the study of $\mathrm{C}\Phi_{10}(q)$ and $\mathrm{C}\Phi_{14}(q)$ in Section 6 where identities associated with
 $k=4,6,8,10,12$ and $16$ are given.
More precisely, we discovered that
\begin{equation}\label{2pto2}\mathrm{C}\Phi_{2p}(q)\equiv \frac{\Theta_3(q^p)}{(q^p;q^p)_\infty}=\mathrm{C}\Phi_{2}(q^p)\pmod{p^2},\end{equation}
which holds for any odd prime $p$.
The second equality follows from Andrews' identity for $\mathrm{C}\Phi_{2}(q)$ (see also \eqref{c-phi-2-mod}).
Congruence \eqref{2pto2} can be viewed as an extension of Andrews' congruence \cite[Corollary 10.2]{Andrews}
\begin{equation}\label{andrews-cphip}\mathrm{C}\Phi_{p}(q)\equiv \frac{1}{(q^p;q^p)_\infty} \pmod{p^2}\end{equation}
if we rewrite \eqref{andrews-cphip}
as
\begin{equation}\label{andrews-cphip-2}\mathrm{C}\Phi_{p}(q)\equiv \mathrm{C}\Phi_{1}(q^p) \pmod{p^2}\end{equation}
using the fact that $$c\phi_1(n) =p(n).$$
The discovery of \eqref{2pto2} leads
 to the congruence
\begin{equation}\label{pltol}\mathrm{C}\Phi_{\ell p}(q)\equiv  \mathrm{C}\Phi_{\ell}(q^p)\pmod{p^2},\end{equation}
which holds for any distinct primes $\ell$ and $p$. Congruence \eqref{pltol} eventually  leads to \eqref{main-cong-power}.

There may be more surprising properties to be discovered for $c\phi_k(n)$  and we hope that this article will be helpful to future researchers who are interested in knowing more about these functions.

\section{Modular properties of $\mathrm{C}\Phi_k(q)$}

In this section, we determine the modular properties of the function $$\mathfrak{A}_{k}(q):=(q;q)_{\infty}^{k}\mathrm{C}\Phi_{k}(q)=\sum\limits_{m_1,\cdots ,m_{k-1}\in
\mathbf{Z}}q^{Q(m_1,\cdots, m_{k-1})}, \quad k>1.$$

Let $\chi$ be a Dirichlet character and $M_{k}(\Gamma_{0}(N),\chi)$  be the space of modular forms on $\Gamma_{0}(N)$ with weight $k$ and multiplier $\chi$. When $\chi$ is the trivial Dirichlet character, we  write $M_{k}(\Gamma_{0}(N))$ for $M_{k}(\Gamma_{0}(N),\chi)$.

Let
\begin{equation} \label{An}
A_n=\left(\begin{matrix}
2 & 1 & 1 & \cdots & 1 \\
1 &2 & 1& \cdots & 1 \\
\cdots & \cdots &\cdots & \cdots & \cdots\\
1 & 1 & 1& \cdots & 2
\end{matrix}\right)_{n\times n}\, .
\end{equation}
Then $\det(A_n)=n+1$ and
\begin{equation} \label{An-1}
A_n^{-1}=\left(\begin{matrix}
\dfrac{n}{n+1} & -\dfrac{1}{n+1} &  -\dfrac{1}{n+1} & \cdots &  -\dfrac{1}{n+1} \\
& & & & \\
 -\dfrac{1}{n+1} & \dfrac{n}{n+1} &  -\dfrac{1}{n+1} & \cdots &  -\dfrac{1}{n+1}\\
 & & & & \\
 \cdots & \cdots & \cdots & \cdots & \cdots &\\
 & & & & \\
  -\dfrac{1}{n+1} &  -\dfrac{1}{n+1} &  -\dfrac{1}{n+1} & \cdots & \dfrac{n}{n+1}
\end{matrix}\right).
\end{equation}

Let $n$ be a positive even integer and
\[\chi(\cdot) =\left(\frac{(-1)^{n/2}\det(A_n)}{\cdot} \right) =\left(\frac{(-1)^{n/2}(n+1)}{\cdot} \right).\]
Since all the diagonal components of $A_{n}$ and $(n+1)A_{n}^{-1}$ are even,
we deduce from \cite[Corollary 4.9.5 (3)]{Miyake} that if
\[\theta(\tau;A_n)=\sum\limits_{m\in \mathbf{Z}^{n}}e^{\pi i \tau\cdot m^t A_n m}=\sum\limits_{m\in \mathbf{Z}^{n}}q^{\frac{1}{2}m^t A_n m}=\sum\limits_{m\in \mathbf{Z}^{n}}q^{Q(m_1,\cdots m_{n})},\] then
\begin{equation}\label{odd-case}
\theta(\tau;A_n)=\sum\limits_{m\in \mathbf{Z}^{n}}q^{Q(m_1,\cdots m_{n})} = \mathfrak{A}_{n+1}\in M_{n/2}\big(\Gamma_{0}(n+1),\chi \big).
\end{equation}

Next, let $n>1$ be an odd positive integer and
$$B_n=\left(\begin{matrix} A_{n} & 0 \\ 0& 2 \end{matrix}\right).$$
Then $\det B_n=2(n+1)$.
We have
\begin{eqnarray*}
\theta(\tau;B_n) &=&\sum\limits_{m\in \mathbf{Z}^{n+1}}e^{\pi i \tau\cdot m^t B_n m}=\sum\limits_{m\in \mathbf{Z}^{n}}q^{\frac{1}{2}m^t A_{n} m}\sum\limits_{m_{n+1}\in \mathbf{Z}}q^{m_{n+1}^2}\\
&=&\sum\limits_{m\in \mathbf{Z}^{n}}q^{Q(m_1,\cdots m_{n})}\sum\limits_{m_{n+1}\in \mathbf{Z}}q^{m_{n+1}^2}\\
&=&\mathfrak{A}_{n+1}(q)\Theta_3(q).
\end{eqnarray*}
Note that
\[B_n^{-1}=\left(\begin{matrix} A_n^{-1} & 0 \\ 0 & \frac{1}{2} \end{matrix} \right).\]
Let
$$\chi(\cdot) =\left(\frac{(-1)^{(n+1)/2}\det(B_n)}{\cdot} \right) =\left(\frac{2(-1)^{(n+1)/2}(n+1)}{\cdot} \right).$$
Since all the diagonal components of $B_{n}$ and $2(n+1)B_{n}^{-1}$ are even, we deduce from \cite[Corollary 4.9.5 (3)]{Miyake} that
\begin{equation}\label{even-case}
\theta(\tau;B_n)=\mathfrak{A}_{n+1}(q)\Theta_3(q) \in M_{(n+1)/2}\big(\Gamma_{0}(2(n+1)),\chi \big).
\end{equation}

Similarly,  let
\[C_n=\left(\begin{matrix} A_{n} & 0 \\ 0& 4 \end{matrix}\right).  \]
Then $\det C_n=4(n+1)$.
 Note that
\[C_n^{-1}=\left(\begin{matrix} A_{n}^{-1} & 0 \\ 0 & \frac{1}{4} \end{matrix} \right).\]
Let
\[\chi(\cdot) =\left(\frac{(-1)^{(n+1)/2}\det(C_n)}{\cdot} \right) =\left(\frac{(-1)^{(n+1)/2}(n+1)}{\cdot} \right).\]
Since all the diagonal components of $C_{n}$ and $4(n+1)C_{n}^{-1}$ are even, we find from  \cite[Corollay 4.9.5 (3)]{Miyake} that
\begin{equation}\label{even-case-2}
\theta(\tau;C_n)=\mathfrak{A}_{n+1}(q)\Theta_3(q^2) \in M_{(n+1)/2}\big(\Gamma_{0}(4(n+1)),\chi \big).
\end{equation}

From \eqref{odd-case}, \eqref{even-case} and \eqref{even-case-2}, we deduce the following theorem:
\begin{theorem}\label{generating-thm}
If $k=2r+1$ is odd, then
\[\mathfrak{A}_k(q) \in M_{(k-1)/2}\Big(\Gamma_{0}(k), \big(\frac{(-1)^{r}\cdot k}{\cdot} \big) \Big).\]
If $k=2r$ is even, then
\[\Theta_3(q)\mathfrak{A}_k(q) \in M_{k/2}\Big(\Gamma_{0}(2k), \big(\frac{2(-1)^{r}\cdot k}{\cdot} \big) \Big),\]
and
\[\Theta_3(q^2)\mathfrak{A}_k(q) \in M_{k/2}\Big(\Gamma_{0}(4k), \big(\frac{(-1)^{r}\cdot k}{\cdot} \big) \Big).\]
\end{theorem}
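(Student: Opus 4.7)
The plan is to observe that the statement is essentially a repackaging of the three computations already carried out in the paragraphs preceding it, namely equations (\ref{odd-case}), (\ref{even-case}) and (\ref{even-case-2}). All that remains is to re-index via $n = k-1$ and to check that the characters match.

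First I would handle the odd case. Writing $k = 2r+1$, set $n = k-1 = 2r$, which is a positive even integer, so (\ref{odd-case}) applies directly. It yields
\[
\mathfrak{A}_k(q) = \theta(\tau;A_n) \in M_{n/2}\Bigl(\Gamma_0(n+1),\, \bigl(\tfrac{(-1)^{n/2}(n+1)}{\cdot}\bigr)\Bigr).
\]
Substituting $n/2 = r = (k-1)/2$, $n+1 = k$ and $(-1)^{n/2} = (-1)^r$ gives the first claim. The only verification needed is that the hypotheses of \cite[Corollary 4.9.5 (3)]{Miyake} are indeed met, which amounts to the fact already recorded that all diagonal entries of $A_n$ and of $(n+1)A_n^{-1}$ are even — this is immediate from (\ref{An}) and (\ref{An-1}).

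Next I would handle the even case. Writing $k = 2r$, set $n = k-1 = 2r-1$, which is odd and $\geq 1$ (the case $k=2$, i.e. $n=1$, is trivial or can be discarded since $k>2$ is assumed in the motivation). Applying (\ref{even-case}) with this $n$, one gets
\[
\Theta_3(q)\mathfrak{A}_k(q) = \theta(\tau;B_n) \in M_{(n+1)/2}\Bigl(\Gamma_0(2(n+1)),\, \bigl(\tfrac{2(-1)^{(n+1)/2}(n+1)}{\cdot}\bigr)\Bigr),
\]
and $(n+1)/2 = r = k/2$, $2(n+1) = 2k$, $(-1)^{(n+1)/2} = (-1)^r$, yielding the second claim. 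The third claim follows in the same manner from (\ref{even-case-2}), using that the diagonal entries of $C_n$ and $4(n+1)C_n^{-1}$ are even, so that \cite[Corollary 4.9.5 (3)]{Miyake} applies with level $4(n+1) = 4k$ and the character $\bigl(\tfrac{(-1)^{(n+1)/2}(n+1)}{\cdot}\bigr) = \bigl(\tfrac{(-1)^r k}{\cdot}\bigr)$.

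Since the modularity assertions of Miyake's corollary have already been invoked in the three lead-up computations, there is no genuine obstacle; the only subtle point is bookkeeping the sign $(-1)^r$ and the factor of $2$ in the character when passing from $A_n$ to $B_n$ or $C_n$. I would therefore present the proof as a short explicit translation of $n \mapsto k-1$ in each of (\ref{odd-case})--(\ref{even-case-2}), together with the one-line verification that the diagonal-even hypothesis used to invoke \cite[Corollary 4.9.5 (3)]{Miyake} is preserved under the block-diagonal extensions from $A_n$ to $B_n$ and $C_n$.
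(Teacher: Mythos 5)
Your proposal is correct and is essentially identical to the paper's own argument: the paper likewise obtains the theorem by applying \cite[Corollary 4.9.5 (3)]{Miyake} to the matrices $A_n$, $B_n$, $C_n$ (after noting that their diagonal entries and those of the scaled inverses are even) and then substituting $n=k-1$ in \eqref{odd-case}, \eqref{even-case} and \eqref{even-case-2}. The character and level bookkeeping you carry out matches the paper exactly.
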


\section{Generating function of $c\phi_{k}(n)$ when $k$ is a prime}

In this section, we will derive expressions for ${\rm C}\Phi_k(q)$ when $k$ is a prime number less than 18.

\subsection{Case $k=2$}\quad
\medskip

Our proof for $k=2$ is exactly the same as that of  Andrews' proof of \eqref{c-phi-2} and we include it for the sake of completeness.
From \eqref{notation}, we find that
\begin{equation}\label{c-phi-2-mod}
\mathrm{C}\Phi_{2}(q) = \frac{\Theta_3(q)}{(q;q)_\infty^2}.
\end{equation}
Using Jacobi triple product identity (see \cite[(3.1)]{Andrews}), we deduce that
\begin{equation}\label{T3}\Theta_3(q) = (-q;q^2)_\infty^2(q^2;q^2)_\infty.\end{equation}
Substituting \eqref{T3} into \eqref{c-phi-2-mod} and simplifying, we complete the proof of \eqref{c-phi-2}.

\subsection{Case $k=3$}\quad
\medskip

From Theorem \ref{generating-thm}, we deduce that
$\mathfrak{A}_3(q)$ is a modular form of weight 1 on $\Gamma_0(3)$ with multiplier $\left(\frac{-3}{\cdot}\right)$.
Comparing the coefficients of $\mathfrak{A}_3(q)$ with the known Eisenstein series of weight 1 \cite[Theorem 4.8.1]{Diamond}
on $\Gamma_0(3)$ with multiplier $\left(\frac{-3}{\cdot}\right)$, we deduce that
$$\mathfrak{A}_3(q) =\sum_{m,n=-\infty}^\infty q^{m^2+mn+n^2}=
1+6\sum_{j=1}^\infty \left(\frac{3}{j}\right) \dfrac{q^j}{1-q^j}.$$ This is equivalent to \eqref{c-phi-3-2}.
Another proof of \eqref{c-phi-3-2} can also be found, for example,
in the article by J.M. Borwein, P.B. Borwein and F.G. Garvan \cite[p. 43]{Borwein}.

We next show that \eqref{c-phi-3} follows from a general identity.
Let $\omega=(1+\sqrt{-d})/2$, with $d\equiv 3\pmod{4}.$ Observe that
the set
$$S=\left\{m+n\omega|m,n\in\mathbf Z\right\}$$ is a disjoint union of
$$S_0=\left\{m+n\omega|m,n\in\mathbf Z,n\equiv 0\!\!\!\!\pmod{2}\right\}
\,\,\text{and}\,\, S_1=\left\{m+n\omega|m,n\in\mathbf Z, n\equiv 1\!\!\!\!\pmod{2}\right\}.$$
Let $$N(m+n\omega)=m^2+mn+\left(\frac{d+1}{4}\right)n^2.$$
 Then
\begin{equation*}\sum_{v\in S}q^{N(v)} = \sum_{v\in S_0}q^{N(v)}+\sum_{v\in S_1}q^{N(v)}.\end{equation*}
Simplifying the above, we deduce that
{
\begin{equation}\label{id}
\sum_{m,n\in\mathbf Z} q^{m^2+mn+\left(\frac{d+1}{4}\right)n^2} = \Theta_3(q)\Theta_3(q^d)+\Theta_2(q)\Theta_2(q^d).
\end{equation}
}
Identity \eqref{c-phi-3} follows from \eqref{id} with $d=3$.

\subsection{Case $k=5$}\label{sub5}\quad
\medskip

We first establish three representations of $\mathrm{C}\Phi_{5}(q)$:
\begin{theorem}\label{cphi5-thm}
The following identities hold:
\begin{align}\label{Phi-5-E}
\mathrm{C}\Phi_{5}(q)&=\frac{1}{(q;q)_\infty^{5}}\left(1+25\sum_{j=1}^\infty \left(\frac{j}{5}\right)\frac{q^j}{(1-q^j)^2}-5\sum_{j=1}^\infty \left(\frac{j}{5}\right) \frac{jq^j}{1-q^j}\right)
\\
\label{c-phi-5-product}
&=\frac{1}{(q^5;q^5)_\infty}+25q\frac{(q^5;q^5)_\infty^{5}}{(q;q)_\infty^{6}}\\
&=\frac{1}{(q^5;q^5)_\infty} + 5\sum_{j=1}^\infty p(5j-1)q^j.\label{c-phi-5-right}
\end{align}
\end{theorem}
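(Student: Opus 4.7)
The plan is to read all three formulas as identities in the small finite-dimensional space $M_2(\Gamma_0(5),\chi_5)$, and to pin down $\mathfrak{A}_5(q)=(q;q)_\infty^5\,\mathrm{C}\Phi_5(q)$ inside this space by matching a handful of Fourier coefficients. By Theorem~\ref{generating-thm} applied with $k=5$ (so $r=2$), $\mathfrak{A}_5\in M_2(\Gamma_0(5),\chi_5)$ with $\chi_5(\cdot)=\left(\frac{5}{\cdot}\right)=\left(\frac{\cdot}{5}\right)$. Ligozat's criterion for eta quotients shows that
\[
f_1(\tau):=\frac{\eta(\tau)^5}{\eta(5\tau)}=\frac{(q;q)_\infty^5}{(q^5;q^5)_\infty},\qquad f_2(\tau):=\frac{\eta(5\tau)^5}{\eta(\tau)}=q\,\frac{(q^5;q^5)_\infty^5}{(q;q)_\infty}
\]
are holomorphic weight-$2$ forms on $\Gamma_0(5)$ with the same character $\chi_5$. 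A standard dimension count, using that $X_1(5)$ has genus zero (so $S_2(\Gamma_1(5))=0$) while the Eisenstein subspace attached to the two factorisations $\chi_5=1\cdot\chi_5=\chi_5\cdot 1$ has dimension $2$, gives $\dim M_2(\Gamma_0(5),\chi_5)=2$; and $\{f_1,f_2\}$ is a basis because their leading $q$-powers differ.

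To establish \eqref{c-phi-5-product} I would write $\mathfrak{A}_5=\alpha f_1+\beta f_2$ and solve for $\alpha,\beta$ from the first two Fourier coefficients. The expansions $f_1=1-5q+O(q^2)$ and $f_2=q+O(q^2)$ are immediate from the product formulas, while a direct count of vectors $m\in\mathbf{Z}^4$ with $Q(m)\le 1$ yields $\mathfrak{A}_5=1+20q+O(q^2)$ (the twenty contributions being the $8$ vectors $\pm e_i$ and the $12$ vectors $\pm(e_i-e_j)$). This forces $\alpha=1$, $\beta=25$, and dividing through by $(q;q)_\infty^5$ yields \eqref{c-phi-5-product}.

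Identity \eqref{c-phi-5-right} then follows at once from Ramanujan's classical identity $\sum_{n\ge 0}p(5n+4)q^n=5(q^5;q^5)_\infty^5/(q;q)_\infty^6$: multiplying by $q$ and reindexing $j=n+1$ rewrites the second summand of \eqref{c-phi-5-product} as $5\sum_{j\ge 1}p(5j-1)q^j$. For \eqref{Phi-5-E}, the Eisenstein-type series
\[
E(\tau):=1+25\sum_{j\ge 1}\left(\frac{j}{5}\right)\frac{q^j}{(1-q^j)^2}-5\sum_{j\ge 1}\left(\frac{j}{5}\right)\frac{jq^j}{1-q^j}
\]
is a linear combination of the two standard weight-$2$ Eisenstein series on $\Gamma_0(5)$ attached to the factorisations of $\chi_5$, and hence lies in $M_2(\Gamma_0(5),\chi_5)$. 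A short expansion gives $E=1+20q+O(q^2)$, matching $\mathfrak{A}_5$, so the same two-coefficient matching argument forces $E=\mathfrak{A}_5$, and dividing by $(q;q)_\infty^5$ yields \eqref{Phi-5-E}.

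The main obstacle is entirely structural: correctly reading off the character from Theorem~\ref{generating-thm}, verifying via Ligozat that $f_1,f_2\in M_2(\Gamma_0(5),\chi_5)$, and pinning down $\dim M_2(\Gamma_0(5),\chi_5)=2$ so that matching two Fourier coefficients determines a form uniquely. Once this setup is in hand, every remaining step is either a short product-formula expansion or a finite lattice-point count, and the three identities fall out in turn.
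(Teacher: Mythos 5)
Your argument is correct, and two of its three steps coincide with the paper's own proof: the authors likewise realize $\mathfrak{A}_5(q)$ as $25E_{5,1}+E_{5,2}$ in the two-dimensional space $M_2\left(\Gamma_0(5),\left(\frac{5}{\cdot}\right)\right)$ by comparing Fourier coefficients (giving \eqref{Phi-5-E}), and they deduce \eqref{c-phi-5-right} from \eqref{c-phi-5-product} via the same Ramanujan identity \eqref{Ramapart5}. Where you genuinely diverge is in the derivation of \eqref{c-phi-5-product}: the paper drops to weight zero, notes that $(q^5;q^5)_\infty\mathrm{C}\Phi_5(q)$ is a modular \emph{function} on $\Gamma_0(5)$, and invokes Kondo's description of the function field of $X_0(5)$ to write it as $1+25\,\eta^6(5\tau)/\eta^6(\tau)$; you instead stay in the weight-two space and expand $\mathfrak{A}_5$ in the eta-quotient basis $\{\eta^5(\tau)/\eta(5\tau),\ \eta^5(5\tau)/\eta(\tau)\}$, justified by Ligozat's criterion, which after division by $(q;q)_\infty^5$ yields the same product formula. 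Your route is somewhat more self-contained — it needs only the dimension count, Ligozat, and two Fourier coefficients (your lattice count $\mathfrak{A}_5=1+20q+O(q^2)$ and the resulting $\alpha=1$, $\beta=25$ are correct, and since the basis has leading terms $1+\cdots$ and $q+\cdots$ in a two-dimensional space, two coefficients do pin the form down) — whereas the paper's Hauptmodul argument is the one it reuses verbatim for $\ell=7$ and $\ell=13$, where the analogous expression is a higher-degree polynomial in $\eta$-quotients and a holomorphic weight-$(\ell-1)/2$ basis would be clumsier. Both are complete proofs; no gap.
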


\begin{proof}
From Theorem \ref{generating-thm}, we deduce that
$$\mathfrak{A}_{5}(q) \in M_{2}\left(\Gamma_{0}(5), \left(\frac{5}{\cdot} \right) \right).$$
Since \cite[Theorem 1.34]{Ono}
$$\dim M_{2}\left(\Gamma_{0}(5), \left(\frac{5}{\cdot} \right)\right)=2,$$
we deduce that the two modular forms
\begin{align}\label{Eisenstein-basis}
E_{5,1}&=\sum_{m=1}^\infty\sum_{d=1}^\infty \left(\frac{d}{5}\right)m q^{md}=\sum_{j=1}^\infty \left(\frac{j}{5}\right)\frac{q^j}{(1-q^j)^2}
\intertext{and}
E_{5,2}&= 1-5\sum_{m=1}^\infty\sum_{d=1}^\infty \left(\frac{d}{5}\right)dq^{md}=1-5\sum_{j=1}^\infty \left(\frac{j}{5}\right) \frac{jq^j}{1-q^j},\notag
\end{align} which are in $M_2\left(\Gamma_{0}(5), \left(\frac{5}{\cdot} \right) \right)$ (see \cite[Sec.\ 4.6]{Diamond}), form a basis for this space of modular forms.
By comparing Fourier coefficients of $\mathfrak{A}_5(q), E_{5,1}$ and $E_{5,2}$, we deduce that
\[\mathfrak{A}_{5}(q)=25E_{5,1}+E_{5,2}\]
and the proof of \eqref{Phi-5-E} is complete.

Before we begin with our proof of  \eqref{c-phi-5-product}, we observe that if $p>3$, then
by
Theorem \ref{generating-thm},
$${\rm C}\Phi_p(q) (q^p;q^p)_\infty$$ is a modular function on $\Gamma_0(p).$
This implies that the function can be expressed in terms of combinations of infinite
products.
For more details,
see for example the paper by H.H.Chan, H. Hahn, R.P. Lewis and S.L. Tan \cite{MRL}.
In \cite[Corollary 10.2]{Andrews}, Andrews showed that
if $p$ is a prime, then
$${\rm C}\Phi_p(q) = \frac{1}{(q^p;q^p)_\infty} + p^2 G_p(q)$$
for some $G_p(q)$ analytic inside $|q|<1$ with integral power
series coefficients. He then asked \cite[Problem 6]{Andrews}
for explicit closed forms for $G_p(q)$.
Since $$G_p(q)(q^p;q^p)_\infty = \frac{1}{p^2}\left({\rm C}\Phi_p(q)
(q^p;q^p)_\infty-1\right),
$$
we conclude that $G_p(q)$
is a modular function on $\Gamma_0(p)$ for $p>3$. This
provides an answer to Andrews' question.
The above discussion also gives us a way to derive alternative
expressions for ${\rm C}\Phi_p(q)$ whenever the functions invariant under
 $\Gamma_0(p)$ can be expressed as a rational function of a single modular function.
 This happens for $p=5,7,$ and $13$.
We now use this fact to derive an expression for ${\rm C}\Phi_5(q).$
It is known from T. Kondo's work \cite{Kondo} that every modular function on $\Gamma_0(5)$ is
a rational function of
${\eta^6(5\tau)}/{\eta^6(\tau)},$
where $$\eta(\tau) = e^{\pi i \tau/12}\prod_{j=1}^\infty \left(1-e^{2\pi i j\tau}\right).$$
Since
${\rm C}\Phi_5(q) (q^5;q^5)_\infty$ is a modular function on $\Gamma_0(5)$, we deduce that
$${\rm C}\Phi_5(q) (q^5;q^5)_\infty=1+25\frac{\eta^6(5\tau)}{\eta^6(\tau)}.$$
This completes the proof of \eqref{c-phi-5-product}.

Using the fact that
$$\frac{1}{(q;q)_\infty}=\sum_{j=0}^\infty p(j)q^j$$
and Ramanujan's identity \cite[Theorem 2.3.4]{Berndt},
\begin{equation}\label{Ramapart5}\sum_{j=1}^\infty p(5j-1)q^j = 5q\frac{(q^5;q^5)_\infty^5}{(q;q)_\infty^6},\end{equation}
we deduce \eqref{c-phi-5-right} from \eqref{c-phi-5-product}.
\end{proof}
\begin{rem}
Identity \eqref{Phi-5-E} is Andrews' \eqref{c-phi-5-2}, which was first proved using results found in Kloosterman's work \cite{Kloosterman}.
Identity  \eqref{c-phi-5-right}   immediately implies
\eqref{Kolitsch5}.
We emphasize here that our proof of \eqref{Kolitsch5} is different from Kolitsch's proof as we have used
\eqref{c-phi-5-product} instead of \eqref{c-phi-5-2}.
\end{rem}

As shown in  \eqref{c-phi-5}, there is a fourth representation  of $\mathrm{C}\Phi_{5}(q)$ due to Baruah and Sarmah.
This identity can be proved by realizing that
$$\mathfrak{A}_5(q) \in M_{2}\left(\Gamma_{0}(40), \left(\frac{5}{\cdot} \right) \right),$$
together with the fact that the space
$M_{2}\Big(\Gamma_{0}(40), \big(\frac{5}{\cdot} \big) \Big)$ is spanned by
the modular forms
\begin{alignat*}{3}
&\Theta_3(q)\Theta_3^{3}(q^5),  &&\Theta_3^{3}(q)\Theta_3(q^5),    && \Theta_3(q^2)\Theta_3^{3}(q^{10}),\quad \Theta_3^{3}(q^2)\Theta_3(q^{10}), \\ &\Theta_3(q)\Theta_3(q^5)\Theta_2^2(q^2),  && \Theta_3(q^2)\Theta_3(q^{10})\Theta_2^{2}(q^2), && \Theta_2^{3}(q^{1/2})\Theta_2(q^{5/2}), \quad\\ &\Theta_3^{2}(q)\Theta_2(q^2)\Theta_2(q^{10}), \quad
&& \Theta_2^{3}(q^2)\Theta_2(q^{10}), \quad \text{and}\quad  &&\Theta_3^{2}(q^5) \Theta_2(q^2)\Theta_2(q^{10}).
\end{alignat*}

\subsection{Case $k=7$}
\quad
\medskip

\begin{theorem}\label{cphi7-thm}
The following identities are true:
\begin{align}
\mathrm{C}\Phi_{7}(q)&=\frac{1}{(q;q)_{\infty}^{7}}\left(1-\frac{7}{8}\sum_{k=1}^\infty \left(\frac{k}{7}\right)\frac{k^2q^k}{1-q^k}+\frac{343}{8}\sum_{k=1}^\infty \left(\frac{k}{7}\right)
\frac{q^k+q^{2k}}{(1-q^k)^3}\right) \label{Phi-7-E} \\
&=\frac{1}{(q^7;q^7)_\infty}+49q\frac{(q^7;q^7)_\infty^{3}}{(q;q)_\infty^{4}}+343q^2\frac{(q^7;q^7)_\infty^{7}}{(q;q)_\infty^{8}} \label{c-phi-7-product}\\
&=\frac{1}{(q^7;q^7)_\infty} + 7\sum_{j=1}^\infty p(7j-2)q^j. \label{c-phi-7-right}
\end{align}
\end{theorem}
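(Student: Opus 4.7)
The plan is to follow the strategy used in the $k=5$ case, proving the three identities in turn.

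For \eqref{Phi-7-E}, Theorem \ref{generating-thm} gives $\mathfrak{A}_7(q) \in M_3(\Gamma_0(7), \chi)$, where $\chi = \left(\frac{-7}{\cdot}\right)$ coincides with the Legendre symbol $\left(\frac{\cdot}{7}\right)$. Using the dimension formula in \cite[Theorem 1.34]{Ono}, one computes $\dim M_3(\Gamma_0(7), \chi)$ and constructs a basis. The Eisenstein subspace is spanned by the two weight-$3$ Eisenstein series with character $\chi$ attached to the pairs $(\chi_1, \chi_2) = (1, \chi)$ and $(\chi, 1)$, whose Fourier expansions are, up to normalization, $\sum_{k\geq 1}\left(\frac{k}{7}\right)\frac{k^2 q^k}{1-q^k}$ and $\sum_{k\geq 1}\left(\frac{k}{7}\right)\frac{q^k+q^{2k}}{(1-q^k)^3}$, together with a constant term determined by $L(-2,\chi)$ (cf.\ \cite[Sec.~4.6]{Diamond}). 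The space also contains the weight-$3$ CM cusp form $\eta^3(\tau)\eta^3(7\tau)$, so its coefficient in the decomposition of $\mathfrak{A}_7(q)$ must be tracked. Writing $\mathfrak{A}_7(q)$ as a linear combination of this basis and comparing enough Fourier coefficients produces the coefficients $-7/8$ and $343/8$ and forces the cusp-form coefficient to vanish.

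For \eqref{c-phi-7-product}, just as in the $k=5$ case, $\mathrm{C}\Phi_7(q)(q^7;q^7)_\infty$ is a modular function on $\Gamma_0(7)$. Since $X_0(7)$ has genus zero, Kondo's result \cite{Kondo} implies that every such function is a rational function of a single Hauptmodul, and a natural choice is
\begin{equation*}
t(\tau) = \frac{\eta^4(7\tau)}{\eta^4(\tau)} = q\,\frac{(q^7;q^7)_\infty^4}{(q;q)_\infty^4},
\end{equation*}
which vanishes at the cusp $\infty$ and has a simple pole at $0$. One then checks that $\mathrm{C}\Phi_7(q)(q^7;q^7)_\infty$ is holomorphic at $\infty$ and has a pole of order at most two at $0$, whence it is a polynomial of degree at most two in $t$. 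Matching the $q$-expansion through $O(q^2)$ pins down this polynomial as $1 + 49\,t + 343\,t^2$, which is \eqref{c-phi-7-product}.

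For \eqref{c-phi-7-right}, invoke Ramanujan's partition identity
\begin{equation*}
\sum_{n=0}^\infty p(7n+5)\, q^n = 7\,\frac{(q^7;q^7)_\infty^3}{(q;q)_\infty^4} + 49\, q\,\frac{(q^7;q^7)_\infty^7}{(q;q)_\infty^8}
\end{equation*}
(see \cite[Theorem 2.3.5]{Berndt}), shift indices via $p(7j-2) = p(7(j-1)+5)$, multiply by $q$, and substitute into \eqref{c-phi-7-product}; the $1/(q^7;q^7)_\infty$ term is carried through and the remaining two terms match.

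The hard part is \eqref{Phi-7-E}: the presence of the CM cusp form $\eta^3(\tau)\eta^3(7\tau)$ means the cuspidal coefficient must be shown to vanish, and the Eisenstein series must be normalized carefully to produce the peculiar rational coefficients $-7/8$ and $343/8$ (which trace back to the value of $L(-2,\chi)$). Once \eqref{c-phi-7-product} is established, \eqref{c-phi-7-right} is an immediate consequence of Ramanujan's identity.
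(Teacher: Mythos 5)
Your proposal is correct and follows essentially the same route as the paper: identity \eqref{Phi-7-E} via the decomposition of $\mathfrak{A}_7(q)$ in $M_3\left(\Gamma_0(7),\left(\frac{-7}{\cdot}\right)\right)$ against the two Eisenstein series and the cusp form $\eta^3(\tau)\eta^3(7\tau)$ (whose coefficient indeed turns out to be zero), identity \eqref{c-phi-7-product} by expressing the modular function $(q^7;q^7)_\infty\mathrm{C}\Phi_7(q)$ as a polynomial in the Hauptmodul $\eta^4(7\tau)/\eta^4(\tau)$, and identity \eqref{c-phi-7-right} from Ramanujan's identity \eqref{Ramapart7}. Your added justification of the degree bound via the pole order at the cusp $0$ is a small refinement the paper leaves implicit, but the argument is otherwise the same.
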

\begin{proof}
Before giving the proof of \eqref{Phi-7-E}, we observe that \eqref{Phi-7-E} is the same as \eqref{c-phi-7-Ei}.
We will prove \eqref{Phi-7-E} using the theory of modular forms. Note that by Theorem \ref{generating-thm}, we have  $\mathfrak A_{7}(q) \in M_{3}\left(\Gamma_{0}(7), \left(\frac{-7}{\cdot} \right)\right)$. The space $M_{3}\left(\Gamma_{0}(7), \left(\frac{-7}{\cdot} \right)\right)$ is spanned by
\[E_{7,1}=\sum_{m=1}^\infty\sum_{d=1}^\infty \left(\frac{d}{7}\right)m^2q^{md}=\sum_{j=1}^\infty \left(\frac{j}{7}\right)
\frac{q^j+q^{2j}}{(1-q^j)^3},\]
\[E_{7,2}=1-\frac{7}{8}\sum_{m=1}^\infty\sum_{d=1}^\infty \left(\frac{d}{7}\right)d^2q^{md}=1-\frac{7}{8}\sum_{j=1}^\infty \left(\frac{j}{7}\right)\frac{j^2q^j}{1-q^j} ,\]
and
\[S_{7}=\eta^3(\tau)\eta^3(7\tau).\]
By comparing Fourier coefficients of these modular forms,
we deduce that
\[\mathfrak{A}_7(q)=\frac{343}{8}E_{7,1}+E_{7,2}.\]
This completes the proof of \eqref{Phi-7-E}.

The proof of \eqref{c-phi-7-product} is similar to the proof of \eqref{c-phi-5-product}.
We recall that modular functions
invariant under $\Gamma_0(7)$ is a rational function of
$$\frac{\eta^4(7\tau)}{\eta^4(\tau)}.$$
Since $(q^7;q^7)_\infty \mathrm{C}\Phi_7(q)$ is such a function, we conclude that
$$(q^7;q^7)_\infty \mathrm{C}\Phi_7(q)=1+49\frac{\eta^4(7\tau)}{\eta^4(\tau)}+343\frac{\eta^8(7\tau)}{\eta^8(\tau)}$$ and the proof
of \eqref{c-phi-7-product} is complete.

Ramanujan discovered that \cite[Theorem 2.4.2]{Berndt}
\begin{equation}\label{Ramapart7}\sum_{j=1}^\infty p(7j-2)q^j = 7q \frac{(q^7;q^7)_\infty^3}{(q;q)_\infty^4}+
49 q^2 \frac{(q^7;q^7)_\infty^7}{(q;q)_\infty^8}.\end{equation} Using \eqref{Ramapart7} and \eqref{c-phi-7-product},
we deduce
\eqref{c-phi-7-right}.

\end{proof}

Identity \eqref{c-phi-7-right}
immediately implies
Kolitsch's identity
\eqref{Kolitsch7}. We emphasize here that our proof of \eqref{Kolitsch7} uses \eqref{c-phi-7-product} instead of
\eqref{Phi-7-E}.

As in the case for $k=5$, we are able to find a representation of $\mathrm{C}\Phi_{7}(q)$ in terms of theta functions. This new identity
is an analogue of \eqref{c-phi-5}. We first observe that $\mathfrak{A}_7(q)\in M_{3}\left(\Gamma_{0}(28), \left(\frac{-28}{\cdot} \right)  \right)$. Furthermore
the modular forms
\begin{alignat*}{3}
&\Theta_3^{5}(q)\Theta_3(q^7), \quad && \Theta_3^{3}(q)\Theta_3^{3}(q^7), \quad && \Theta_3(q)\Theta_3^{5}(q^7), \quad \Theta_3^{4}(q)\Theta_2(q^{1/2})\Theta_2(q^{7/2}), \\
& \Theta_3^{4}(q^7)\Theta_2(q^{1/2})\Theta_2(q^{7/2}), \quad && \Theta_3^{4}(q)\Theta_2(q)\Theta_2(q^{7}), \quad
&& \Theta_3^{4}(q^7)\Theta_2(q)\Theta_2(q^{7}), \quad  \\ &\Theta_2^3(q^{1/2})\Theta_2^3(q^{7/2}),
&& \Theta_2^3(q)\Theta_2^3(q^{7}),\quad && \Theta_2^5(q)\Theta_2(q^{7}), \quad  \text{and}\quad \Theta_2(q)\Theta_2^5(q^{7})
\end{alignat*}
form a basis for $M_{3}\left(\Gamma_{0}(28), \left(\frac{-28}{\cdot} \right)  \right)$. Hence, we deduce that
\begin{align}\label{c-phi-7-theta}\mathrm{C}\Phi_{7}(q)&=\frac{1}{(q;q)_{\infty}^7}\Bigg(-\frac{15}{32}\Theta_3^{5}(q)\Theta_3(q^7)+\frac{55}{16}\Theta_3^{3}(q)\Theta_3^{3}(q^7)-\frac{63}{32}\Theta_3(q)\Theta_3^{5}(q^7) \notag\\
&\qquad\qquad\qquad+\frac{15}{16}\Theta_3^{4}(q)\Theta_2(q^{1/2})\Theta_2(q^{7/2})
+\frac{105}{16}\Theta_3^{4}(q^7)\Theta_2(q^{1/2})\Theta_2(q^{7/2}) \notag\\
&\qquad\qquad\qquad-\frac{15}{16}\Theta_3^{4}(q)\Theta_2(q)\Theta_2(q^7)
+\frac{525}{16}\Theta_3^{4}(q^7)\Theta_2(q)\Theta_2(q^7) \notag \\
&\qquad\qquad\qquad+\frac{105}{32}\Theta_2^3(q^{1/2})\Theta_2^3(q^{7/2}) +\frac{95}{8}\Theta_2^3(q)\Theta_2^3(q^7)  \notag\\
&\qquad\qquad\qquad+\frac{15}{16} \Theta_2^5(q)\Theta_2(q^7) -\frac{189}{16}\Theta_2(q)\Theta_2^5(q^{7}) \Bigg).\end{align}

We next prove some congruences satisfied by $c\phi_{7}(n)$ using \eqref{c-phi-7-theta} and \eqref{c-phi-7-product}.

\begin{theorem}\label{c-phi-7-mod-5}
For any integer $n\geq 0$,
\[c\phi_{7}(5n+3) \equiv 0 \pmod{5}.\]
\end{theorem}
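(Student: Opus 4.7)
The plan is to use the elegant identity \eqref{c-phi-7-right} proved just above, namely
\[c\phi_7(n) = p(n/7) + 7\,p(7n-2),\]
where $p(x)=0$ whenever $x$ is not a non-negative integer, and then to reduce the entire congruence to Ramanujan's classical partition congruence $p(5j+4)\equiv 0\pmod 5$.

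Substituting $n=5m+3$ into the identity yields
\[c\phi_7(5m+3) = p\!\left(\frac{5m+3}{7}\right) + 7\,p(35m+19).\]
The second term is easy: since $35m+19 = 5(7m+3)+4$, the argument is of the form $5j+4$, so Ramanujan's congruence gives $p(35m+19)\equiv 0\pmod 5$ and hence $7\,p(35m+19)\equiv 0\pmod 5$.

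The first term needs a small residue computation. If $7\nmid 5m+3$ the term is $0$ by convention. Otherwise write $5m+3 = 7k$; then $7k \equiv 3 \pmod 5$, i.e.\ $2k\equiv 3\pmod 5$, which forces $k\equiv 4\pmod 5$. Thus $k = 5j+4$ for some non-negative integer $j$, and Ramanujan's congruence again gives $p(k)\equiv 0\pmod 5$.

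Combining the two cases, both summands in $c\phi_7(5m+3)$ are divisible by $5$, so $c\phi_7(5m+3)\equiv 0\pmod 5$. There is no real obstacle here: once \eqref{c-phi-7-right} is in hand, the only ingredient beyond elementary arithmetic is Ramanujan's classical $p(5j+4)\equiv 0\pmod 5$, and the proof is essentially a one-line matching of residue classes. The only minor subtlety is checking that the $k\equiv 4\pmod 5$ arising in the first term does give rise to a \emph{non-negative} $j$, which is automatic since $k = (5m+3)/7 \geq 4$ whenever the term is non-zero and $m\geq 0$ makes $k\geq 4$ in the smallest relevant case.
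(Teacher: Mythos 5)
Your proof is correct, and it takes a genuinely different route from the one in the paper. The paper proves this congruence from the theta-function representation \eqref{c-phi-7-theta}: reducing that identity modulo $5$ leaves only the two terms $\Theta_3(q)\Theta_3^5(q^7)+\Theta_2(q)\Theta_2^5(q^7)$, which are then analyzed via Jacobi's identity for $(q;q)_\infty^3$ and a quadratic-residue argument (using $\left(\frac{-8}{5}\right)=\left(\frac{-2}{5}\right)=-1$) to show every coefficient of $q^m$ with $m\equiv 3\pmod 5$ is divisible by $5$. You instead start from \eqref{c-phi-7-right}, i.e.\ Kolitsch's identity $c\phi_7(n)=p(n/7)+7p(7n-2)$, and reduce everything to Ramanujan's congruence $p(5j+4)\equiv 0\pmod 5$; your residue computations ($35m+19=5(7m+3)+4$, and $5m+3=7k$ forcing $k\equiv 4\pmod 5$) are both right, and the non-negativity of $j$ is indeed automatic since $k\equiv 4\pmod 5$ with $k\ge 0$ already forces $k\ge 4$. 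Your argument is shorter and more elementary given that Theorem \ref{cphi7-thm} is already established, and it isolates the congruence as a direct corollary of Ramanujan's classical result; the paper's proof has the virtue of being independent of \eqref{c-phi-7-product}--\eqref{c-phi-7-right} and of illustrating the theta-function machinery that is used throughout the paper (the authors also sketch, in a remark, yet another proof via the Garvan--Sellers lifting theorem applied to Andrews' congruence $c\phi_2(5n+3)\equiv 0\pmod 5$). In short: three valid proofs, and yours is arguably the cleanest of the three once \eqref{c-phi-7-right} is in hand.
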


\begin{proof}
From \eqref{c-phi-7-theta}, we deduce that
\begin{eqnarray}\label{c-phi-7-mod-5-start}
\sum_{j=0}^{\infty}c\phi_{7}(j)q^j &\equiv& \frac{1}{(q;q)_{\infty}^{7}}\Big(\Theta_3(q)\Theta_3^{5}(q^7)+\Theta_2(q)\Theta_2^5(q^7) \Big) \pmod{5} \nonumber \\
&\equiv & \frac{1}{(q^5;q^5)_{\infty}^{2}}\left((q;q)_{\infty}^{3}\Theta_3(q)\Theta_3(q^{35})+(q;q)_{\infty}^{3}
\Theta_2(q)\Theta_2(q^{35})\right)  \pmod{5}. \nonumber \\
\end{eqnarray}
Using Jacobi's identity for $(q;q)_\infty^3$ \cite[Theorem 1.3.9]{Berndt}, we find that
\begin{equation}\label{c-phi-7-cong-expan-1}
(q;q)_{\infty}^{3}\Theta_3(q)=\sum_{i=0}^{\infty}\sum_{j=-\infty}^{\infty}(-1)^{i}(2i+1)q^{i(i+1)/2+j^2}.
\end{equation}
Now, observe that
$$m=\frac{i(i+1)}{2}+j^2$$  is equivalent to $$8m+1=(2i+1)^2+8j^2.$$
If $8m\equiv -1 \pmod{5}$, then $m\equiv 3\pmod{5}$.
Since
$$\left(\dfrac{-8}{5} \right)=-1,$$ we deduce that
$$(2i+1)^2+8j^2\equiv 0\pmod{5}$$ holds if and only if
$$2i+1\equiv j \equiv 0\pmod{5}.$$

Similarly, we have
\begin{equation}\label{c-phi-7-cong-expan-2}
q^{35/4}(q;q)_{\infty}^{3}\Theta_2(q)=\sum\limits_{i=0}^{\infty}\sum\limits_{j=0}^{\infty}(-1)^{i}(2i+1)q^{9+i(i+1)/2+j(j+1)}.
\end{equation}
Observe that
\[m=9+\frac{i(i+1)}{2}+j(j+1)\] is equivalent to
\[8m-69=(2i+1)^2+2(2j+1)^2.\]
Note that if $8m-69\equiv 0\pmod{5}$ then $m\equiv 3\pmod{5}$.
Since $$\left(\frac{-2}{5} \right)=-1,$$ we deduce that  $$(2i+1)^2+2(2j+1)^2\equiv 0\pmod{5}$$ holds if
and only if  $$2i+1\equiv 2j+1 \equiv 0\pmod{5}.$$

From (\ref{c-phi-7-mod-5-start}), (\ref{c-phi-7-cong-expan-1}) and (\ref{c-phi-7-cong-expan-2}), we conclude that if $m\equiv 3\pmod{5}$ then $$c\phi_{7}(m)\equiv 0\pmod{5},$$
or equivalently,
$$ c\phi_7(5n+3)\equiv 0\pmod{5}$$
for any integer $n\geq 0$.
\end{proof}
\begin{rem} It is possible to deduce Theorem \ref{c-phi-7-mod-5} without using \eqref{c-phi-7-theta}.
We first recall a recent result of
F.G. Garvan and J.A.
Sellers \cite{Garvan} which states that
if  $p$ is a prime number and $0<r<p$, then the congruence
$$c\phi_k(pn+r)\equiv 0\pmod{p},\quad \textrm{for all $n\in \mathbf N$}$$
implies that
$$c\phi_{pN+k}(pn+r)\equiv 0\pmod{p}, \quad \textrm{for all $n\in \mathbf N$}.$$
In \cite[(10.3)]{Andrews}, Andrews showed that for all integers $n\geq 0$, \begin{equation}\label{Andrews-cphi-2-cong} c\phi_2(5n+3)\equiv 0\pmod{5}.\end{equation}
Applying the result of Garvan and  Sellers with $p=5, r=3, N=1$ and $k=2$, we complete the proof of Theorem
\ref{c-phi-7-mod-5}.
\end{rem}

Our next set of congruences are consequences of \eqref{c-phi-7-product}.

\begin{theorem}\label{cphi7-cong-thm}
For any integer $n\ge 0$, we have
\begin{align}\label{cphi7-cong}
c\phi_{7}(7n+3)\equiv c\phi_{7}(7n+5) \equiv c\phi_{7}(7n+6) \equiv 0 \pmod{7^3}.
\end{align}
\end{theorem}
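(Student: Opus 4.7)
The plan is to reduce the product identity \eqref{c-phi-7-product} modulo $7^{3}=343$. Since the last summand $343q^{2}(q^{7};q^{7})_{\infty}^{7}/(q;q)_{\infty}^{8}$ already has coefficient divisible by $343$, and the first summand $1/(q^{7};q^{7})_{\infty}$ contributes only to exponents that are multiples of $7$, the three congruences in \eqref{cphi7-cong} reduce to the single claim that the coefficient of $q^{7n+r}$ in
\[
g(q) := q\,\frac{(q^{7};q^{7})_{\infty}^{3}}{(q;q)_{\infty}^{4}}
\]
is divisible by $7$ for each $r\in\{3,5,6\}$; the prefactor $49$ will then lift this to divisibility by $343$.

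To see why $g(q)$ vanishes in these residue classes mod $7$, I would first apply the Frobenius congruence $(q;q)_{\infty}^{7}\equiv(q^{7};q^{7})_{\infty}\pmod 7$ to rewrite
\[
g(q)\;\equiv\; q\,(q;q)_{\infty}^{3}\,(q^{7};q^{7})_{\infty}^{2}\pmod 7.
\]
Because $(q^{7};q^{7})_{\infty}^{2}$ only carries exponents divisible by $7$, convolving with it preserves residues mod $7$, so the residue class mod $7$ of any exponent in $g(q)$ is already governed by the single factor $q(q;q)_{\infty}^{3}$.

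The decisive step is Jacobi's identity \cite[Theorem~1.3.9]{Berndt} $(q;q)_{\infty}^{3}=\sum_{n\geq 0}(-1)^{n}(2n+1)q^{n(n+1)/2}$, which gives
\[
q(q;q)_{\infty}^{3}=\sum_{n\geq 0}(-1)^{n}(2n+1)q^{m(n)}, \qquad m(n)=\tfrac{n(n+1)}{2}+1.
\]
The elementary identity $8m(n)=(2n+1)^{2}+7$ yields $m(n)\equiv(2n+1)^{2}\pmod 7$, so every exponent $m(n)$ is a quadratic residue mod $7$, i.e.\ lies in $\{0,1,2,4\}$ and never in $\{3,5,6\}$. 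In the borderline case $7\mid(2n+1)$ one has $m(n)\equiv 0\pmod 7$ (again outside $\{3,5,6\}$), and in any event the coefficient $(2n+1)$ is itself divisible by $7$. Consequently the coefficient of $q^{m}$ in $q(q;q)_{\infty}^{3}$ is divisible by $7$ whenever $m\equiv 3,5,6\pmod 7$, which completes the plan.

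I do not foresee any serious obstacle: the argument is a clean modular reduction of \eqref{c-phi-7-product} combined with Jacobi's triple-sum formula and a quadratic-residue count modulo $7$. The only piece of bookkeeping that requires attention is verifying that the case $7\mid(2n+1)$ cannot contaminate the classes $\{3,5,6\}$, which is immediate from $m(n)\equiv 0\pmod 7$ in that subcase.
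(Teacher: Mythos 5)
Your proposal is correct and follows essentially the same route as the paper: reduce \eqref{c-phi-7-product} modulo $7^3$, note that $1/(q^7;q^7)_\infty$ only affects exponents divisible by $7$, apply the binomial (Frobenius) congruence to get $q(q;q)_\infty^3(q^7;q^7)_\infty^2 \pmod 7$, and conclude via Jacobi's identity that the exponents $1+i(i+1)/2$ are quadratic residues $\{0,1,2,4\}$ modulo $7$, never $3,5,6$. The only cosmetic difference is your extra remark about the subcase $7\mid(2n+1)$, which, as you note, is already covered by the exponent count.
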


\begin{proof}
From \eqref{c-phi-7-product}, we find that
\begin{align}
\sum_{k=0}^{\infty}c\phi_{7}(k)q^k \equiv \frac{1}{(q^7;q^7)_\infty}+49q\frac{(q^7;q^7)_\infty^{3}}{(q;q)_\infty^{4}} \pmod{7^3}.
\end{align}
Let
\[q\frac{(q^7;q^7)_\infty^{3}}{(q;q)_\infty^{4}}=\sum_{j=0}^{\infty}a(j)q^j.\]
Then
\begin{align}\label{relate-an}
\sum_{n=0}^{\infty}c\phi_{7}(7n+r)q^{n}\equiv 49\sum_{n=0}^{\infty} a(7n+r)q^n \pmod{7^3}, \quad 1\le r \le 6.
\end{align}
By the binomial theorem, we find that
\begin{align}
\sum_{j=0}^{\infty}a(j)q^j\equiv q (q^7;q^7)_\infty^{2}(q;q)_\infty^{3} \equiv (q^7;q^7)_\infty^{2}\left(\sum_{i=0}^{\infty}(-1)^{i}(2i+1)q^{i(i+1)/2+1}\right) \pmod{7}.
\end{align}
Since
$$1+\frac{i(i+1)}{2}\equiv 0, 1, 2 \,\,\text{or}\,\, 4\pmod{7},$$
we deduce that
\begin{align}\label{an-cong}
a(7n+3)\equiv a(7n+5) \equiv a(7n+6) \equiv 0 \pmod{7}.
\end{align}
Combining \eqref{relate-an} with \eqref{an-cong} we complete the proof of \eqref{cphi7-cong}.
\end{proof}

\subsection{Case $k=11$}

\medskip

\begin{theorem}\label{cphi11-thm}
We have
\begin{equation}\label{c-phi-11-right}
\mathrm{C}\Phi_{11}(q)=\frac{1}{(q^{11};q^{11})_\infty}+11\sum_{j=1}^{\infty}p(11j-5)q^j.
\end{equation}
\end{theorem}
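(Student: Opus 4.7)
The plan is to recast the identity as an equality of weight-$5$ modular forms on $\Gamma_0(11)$ with character $\chi=\left(\frac{-11}{\cdot}\right)$, extending the approach used above for $k=5,7$ to the case where $\Gamma_0(11)$ has genus one (so the field of modular functions is no longer rational in a single Hauptmodul). Multiplying the statement by $(q;q)_\infty^{11}$ converts it into
\[
\mathfrak{A}_{11}(q) = \frac{\eta(\tau)^{11}}{\eta(11\tau)} + 11(q;q)_\infty^{11}\sum_{j=1}^\infty p(11j-5)\,q^j,
\]
and by Theorem~\ref{generating-thm} the left-hand side lies in $M_5(\Gamma_0(11),\chi)$. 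Ligozat's criterion places $\eta(\tau)^{11}/\eta(11\tau)\in M_5(\Gamma_0(11),\chi)$ as well: the congruences $1\cdot 11+11\cdot(-1)\equiv 0$ and $11\cdot 11+1\cdot(-1)=120\equiv 0\pmod{24}$ both hold, and a cusp-order computation gives order $0$ at $\infty$ and order $5$ at $0$.

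The crucial step is to place
\[
G(\tau):=11\,q\,(q;q)_\infty^{11}\sum_{n\geq 0} p(11n+6)\,q^n
\]
in $M_5(\Gamma_0(11),\chi)$. To this end I would use the Atkin-operator identity
\[
\sum_{n\geq 0} p(11n+6)\,q^n = U_{11}\!\left(\frac{q^{-6}}{(q;q)_\infty}\right),
\]
together with the fact that $1/(q;q)_\infty$, up to a fractional power of $q$, is the weakly holomorphic form $1/\eta(\tau)$ of weight $-1/2$. Since $U_{11}$ preserves modularity at level $11$, after multiplying by $q$ and by the weight-$11/2$ eta-power $(q;q)_\infty^{11}$ and reconciling the multiplier systems, the product becomes a weight-$5$ form on $\Gamma_0(11)$ with character $\chi$. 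A cusp-by-cusp holomorphicity check, using the Fricke involution $\tau\mapsto -1/(11\tau)$ to control the expansion at the cusp $0$, places $G(\tau)$ in $M_5(\Gamma_0(11),\chi)$.

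Once all three terms are realized as elements of the same finite-dimensional space $M_5(\Gamma_0(11),\chi)$, Sturm's bound reduces the identity to matching finitely many Fourier coefficients, which can be verified directly. The main obstacle will be the second step: carefully tracking the $U_{11}$-operator on $1/\eta(\tau)$, reconciling the eta-multiplier system, and establishing holomorphicity of the image at the cusp $0$ through a Fricke-involution analysis. Once this modularity has been nailed down, the finite coefficient comparison at the Sturm bound is routine, and the resulting equality of Fourier series is precisely the claimed identity for $\mathrm{C}\Phi_{11}(q)$.
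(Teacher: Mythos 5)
Your proposal is correct and follows the same overall strategy as the paper: both proofs reduce \eqref{c-phi-11-right} to an identity in the finite-dimensional space $M_5\big(\Gamma_0(11),\big(\tfrac{-11}{\cdot}\big)\big)$, using Theorem~\ref{generating-thm} for $\mathfrak{A}_{11}(q)$, and then verify finitely many Fourier coefficients. The only substantive difference is in execution. The paper writes down an explicit five-element spanning set of the space (which includes $\eta^{11}(\tau)/\eta(11\tau)$, $\eta^{11}(11\tau)/\eta(\tau)$, the form $(q;q)_\infty^{11}\sum_{j\ge1}p(11j-5)q^j$, and two Eisenstein series) and solves for the coefficients of $\mathfrak{A}_{11}$ in that basis, whereas you propose to check the difference of the two sides directly up to the Sturm bound; these are interchangeable. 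More importantly, for the modularity of the partition-sum term the paper (in Section 4) applies $U_{11}$ to the weight-zero modular function $\eta(11^2\tau)/\eta(\tau)$ on $\Gamma_0(11^2)$ and invokes Atkin--Lehner's Lemma~7 to land in the function field of $\Gamma_0(11)$, then multiplies by the weight-$5$ eta quotient $\eta^{11}(\tau)/\eta(11\tau)$. Your route through $U_{11}$ acting on the weight $-1/2$ form $1/\eta(\tau)$ reaches the same conclusion but forces you to track a half-integral-weight multiplier system; the paper's normalization sidesteps that bookkeeping entirely, so you may prefer it when you carry out the cusp analysis at $0$.
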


\begin{proof}
By Theorem \ref{generating-thm}, we know that
$\mathfrak{A}_{11}(q)\in M_{5}\left(\Gamma_{0}(11), \left(\frac{-11}{\cdot} \right) \right)$.
The dimension of $M_{5}\left(\Gamma_{0}(11), \left(\frac{-11}{\cdot} \right) \right)$ is 5 \cite[Theorem 1.34]{Ono} and this space is spanned by
the modular forms
\begin{align*}
&\frac{\eta^{11}(11\tau)}{\eta(\tau)}, \quad \frac{\eta^{11}(\tau)}{\eta(11\tau)},\quad
(q;q)_\infty^{11} \sum_{j=1}^\infty p(11j-5) q^{j},\\
&\sum_{m=1}^\infty\sum_{d=1}^\infty \left(\frac{d}{11}\right) m^4 q^{md}\quad\text{and}\quad
\frac{1275}{11}+\sum_{m=1}^\infty\sum_{d=1}^\infty \left(\frac{d}{11}\right)d^4 q^{md}.
\end{align*}
By comparing the coefficients of $\mathfrak{A}_{11}(q)$ with those of the five modular forms above, we deduce that
\[\mathfrak{A}_{11}(q)=\frac{\eta^{11}(\tau)}{\eta(11\tau)}+11 (q;q)_\infty^{11} \sum_{j=1}^\infty p(11j-5) q^{j}.\]
This proves \eqref{c-phi-11-right}.
\end{proof}

It is immediate that \eqref{c-phi-11-right} implies \eqref{Analogue11}.
There is no analogue of
\eqref{Phi-5-E} and \eqref{Phi-7-E} for $k=11$ but an analogue for  \eqref{c-phi-5} and
\eqref{c-phi-7-theta} exists. This expression is complicated and we will give such identities if we do not have other
representations for $(q^k;q^k)_\infty\mathrm{C}\Phi_k(q)$ when $k$ is composite (see Section 6).

\subsection{Case $k=13$}
\begin{theorem}\label{cphi13-thm}
We have
\begin{align}
\mathrm{C}\Phi_{13}(q)&=\frac{1}{(q^{13};q^{13})_\infty}+169\Big(q\frac{(q^{13};q^{13})_\infty}{(q;q)_\infty^{2}}
+36q^2\frac{(q^{13};q^{13})_\infty^{3}}{(q;q)_\infty^{4}}+494q^3\frac{(q^{13};q^{13})_\infty^{5}}{(q;q)_\infty^{6}} \notag \\ & \qquad\qquad\qquad+3380q^4\frac{(q^{13};q^{13})_\infty^{7}}{(q;q)_\infty^{8}} +13182q^5\frac{(q^{13};q^{13})_\infty^{9}}{(q;q)_\infty^{10}}\notag \\
&\qquad\qquad\qquad +28561q^6\frac{(q^{13};q^{13})_\infty^{11}}{(q;q)_\infty^{12}}
+28561q^7\frac{(q^{13};q^{13})_\infty^{13}}{(q;q)_\infty^{14}} \Big)\label{cphi13-product}
\\
=&\frac{1}{(q^{13};q^{13})_\infty}+13\sum_{j=1}^\infty p(13j-7)q^{j}+26q\frac{(q^{13};q^{13})_\infty}{(q;q)_\infty^{2}}.\label{c-phi-13-right}\end{align}
\end{theorem}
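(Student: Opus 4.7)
The plan is to follow the template established in the cases $k=5$ and $k=7$. First, by Theorem \ref{generating-thm}, $\mathfrak{A}_{13}(q)=(q;q)_\infty^{13}\mathrm{C}\Phi_{13}(q) \in M_6\!\left(\Gamma_0(13),\left(\tfrac{13}{\cdot}\right)\right)$. Multiplying through by the weight $-6$ eta quotient $\eta(13\tau)/\eta^{13}(\tau)$, whose multiplier system cancels the quadratic character, produces the function
\[
F(\tau) := (q^{13};q^{13})_\infty\mathrm{C}\Phi_{13}(q) = \frac{\eta(13\tau)}{\eta^{13}(\tau)}\,\mathfrak{A}_{13}(q),
\]
which will be a modular function on $\Gamma_0(13)$. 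Since $X_0(13)$ has genus zero, its function field is rational, generated by the hauptmodul
\[
t(\tau) := \frac{\eta^2(13\tau)}{\eta^2(\tau)} = q\,\frac{(q^{13};q^{13})_\infty^{2}}{(q;q)_\infty^{2}},
\]
which has a simple zero at the cusp $\infty$ and a simple pole at the cusp $0$.

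Next, $F$ is manifestly holomorphic at $\infty$ with $F(\infty)=1$. To express $F$ as a polynomial in $t$, I would bound the pole order of $F$ at the cusp $0$. The plan is to apply the Fricke involution $\tau\mapsto -1/(13\tau)$ to $\mathfrak{A}_{13}$ and read off the leading order of the transformed series; alternatively, the Cohen--Oesterl\'e dimension formula should give $\dim M_6\!\left(\Gamma_0(13),\left(\tfrac{13}{\cdot}\right)\right)=8$, which pins the pole order of $F$ at $0$ to be at most $7$. Either way, the upshot is
\[
F(\tau) = 1 + \sum_{j=1}^{7} a_j\,t(\tau)^{j}
\]
for unique rationals $a_j$. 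Since $t^j/(q^{13};q^{13})_\infty = q^j (q^{13};q^{13})_\infty^{2j-1}/(q;q)_\infty^{2j}$, dividing through by $(q^{13};q^{13})_\infty$ yields an identity in the exact shape of \eqref{cphi13-product}. I will then pin down the $a_j$ by matching the first eight Fourier coefficients of $F$ against those of $1, t, t^2, \ldots, t^{7}$; because the latter have distinct leading $q$-orders $0, 1, \ldots, 7$, the resulting linear system is triangular, and solving it should give $a_j = 169\,c_j$ with the integer values $c_j$ listed in the statement.

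For the second identity \eqref{c-phi-13-right}, I will run the same polynomial-in-$t$ argument on a suitable eta rescaling of $\sum_{j\geq 1}p(13j-7)q^j$ (which, like $F$, becomes a modular function on $\Gamma_0(13)$ after multiplication by $\eta^{11}(\tau)\eta(13\tau)$ and a power of $q$) to obtain a Ramanujan/Zuckerman-type decomposition
\[
\sum_{j=1}^{\infty}p(13j-7)\,q^{j} = 11\,\frac{t}{(q^{13};q^{13})_\infty} + 13\sum_{j=2}^{7} c_j\,\frac{t^{j}}{(q^{13};q^{13})_\infty}.
\]
Substituting this into \eqref{cphi13-product} and regrouping the $t/(q^{13};q^{13})_\infty$ term via $169 = 11\cdot 13 + 26$ then produces exactly \eqref{c-phi-13-right}. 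The hardest part will be establishing the sharp bound of $7$ on the pole order of $F$ at the cusp $0$, together with the analogous bound needed for the $p(13j-7)$ series; once those bounds are in place, the rest of the proof reduces to finite-dimensional linear algebra and the verification of a few Fourier coefficients.
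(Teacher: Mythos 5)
Your proposal is correct and follows essentially the same route as the paper: both express $(q^{13};q^{13})_\infty\mathrm{C}\Phi_{13}(q)$ as a degree-$7$ polynomial in the hauptmodul $\eta^2(13\tau)/\eta^2(\tau)$ of $\Gamma_0(13)$ and determine the coefficients by the triangular system of Fourier coefficients, then pass to \eqref{c-phi-13-right} via Zuckerman's identity \eqref{Zuckerman-id}. The only (harmless) difference is that you rederive Zuckerman's identity by the same hauptmodul argument and supply the pole-order bound at the cusp $0$ explicitly, whereas the paper cites Zuckerman and Kondo for these facts.
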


\begin{proof}
From the discussion at the end of Section \ref{sub5},
we know that
 $$(q^{13};q^{13})_\infty\mathrm{C}\Phi_{13}(q)$$
is a modular function invariant under $\Gamma_0(13)$ and
since modular functions invariant under $\Gamma_0(13)$ are rational functions of
$H = \eta^2(13\tau)/\eta^2(\tau)$ \cite{Kondo}, we deduce that
\begin{align*}
&(q^{13};q^{13})_\infty\mathrm{C}\Phi_{13}(q)\\
&=
1+169\bigg(H+36H^2+494H^3+3380H^4
+13182 H^5+28561 H^6+28561 H^7\bigg)
\end{align*}
and \eqref{cphi13-product} follows.

Around 1939, motivated by Ramanujan's identities \eqref{Ramapart5} and \eqref{Ramapart7},
 H. Zuckerman \cite[Eq. (1.15)]{Zuckerman} discovered that
\begin{align}\label{Zuckerman-id}
\sum_{j=1}^\infty p(13j-7)q^j=11q&\frac{(q^{13};q^{13})_\infty}{(q;q)_\infty^{2}}+468q^2\frac{(q^{13};q^{13})_\infty^{3}}{(q;q)_\infty^{4}}
+6422q^3\frac{(q^{13};q^{13})_\infty^{5}}{(q;q)_\infty^{6}}\nonumber \\ &+43940q^4\frac{(q^{13};q^{13})_\infty^7}{(q;q)_\infty^{8}}
+171366q^5\frac{(q^{13};q^{13})_\infty^{9}}{(q;q)_\infty^{10}}\nonumber\\
&+371293q^6\frac{(q^{13};q^{13})_\infty^{11}}{(q;q)_\infty^{12}}+371293q^7\frac{(q^{13};q^{13})_\infty^{13}}{(q;q)_\infty^{14}}.
\end{align}
Using \eqref{Zuckerman-id} to simplify \eqref{cphi13-product}, we deduce
that \[{\rm C}\Phi_{13}(q) = \frac{1}{(q^{13};q^{13})_\infty}+13\sum_{j=1}^\infty p(13j-7)q^{j}+26q\frac{(q^{13};q^{13})_\infty}{(q;q)_\infty^{2}}\]
and this yields \eqref{c-phi-13-right}.
\end{proof}

Identity \eqref{c-phi-13-right} immediately implies
\eqref{Analogue13}.

We observe that the appearance of
$$\sum_{j=1}^\infty p(13j-7)q^{j}$$ simplifies \eqref{cphi13-product}, leading to \eqref{c-phi-13-right} with only three terms on the right hand side. Identity \eqref{c-phi-13-right} is clearly an analogue of Kolitsch's identities
 \eqref{c-phi-5-right} and \eqref{c-phi-7-right}.

In Section 4, we will prove identities involving both $\mathrm{C}\Phi_k(q)$ and
$$\sum_{j=1}^\infty p\left(kj-\frac{k^2-1}{24}\right)q^j$$ when $k>3$ is a prime.
This method appears to yield the simplest (in terms of the number of modular forms
involved)
representation of $\mathrm{C}\Phi_k(q)$ for any prime $k>3$ and it does not involve the construction of basis for
$$M_{(k-1)/2}\Big(\Gamma_{0}(k), \big(\frac{(-1)^{(k-1)/2}k}{\cdot} \big) \Big).$$
Constructing such basis could get complicated for large $k$, as we shall see in the next subsection.

\subsection{Case $k=17$}
\quad
\medskip

Let
\begin{equation}\label{Ea}
\mathcal{E}_{a}(\tau)=q^{17B_{2}(a/17)/2}\prod\limits_{m=1}^{\infty}(1-q^{17(m-1)+a})(1-q^{17m-a}),
\end{equation} where $B_{2}(x)=x^2-x+1/6$.
Let
\begin{align*}
&f_{17,1}=\eta^{7}(\tau)\eta(17\tau), \quad f_{17,2}(\tau)=\eta(\tau)\eta^{7}(17\tau), \\
&g_{17,1}(\tau)=\frac{1}{8}(17E_{2}(17\tau)-E_{2}(\tau)), \\
&g_{17,2}(\tau)=\frac{1}{4}\eta^{4}(17\tau)\sum_{k=0}^{7}\mathcal{E}_{2\cdot 3^{k}}(\tau)\mathcal{E}_{14\cdot 3^{k}}(\tau)\mathcal{E}_{4\cdot 3^{k}}(\tau)^{2}\mathcal{E}_{12\cdot 3^{k}}(\tau)\mathcal{E}_{6\cdot 3^{k}}(\tau)\mathcal{E}_{10\cdot 3^{k}}(\tau)^2\mathcal{E}_{8\cdot 3^{k}}(\tau), \\
&h_{17,1}(\tau)=g_{17,1}^{2}(\tau), \quad h_{17,2}(\tau)=g_{17,1}(\tau)g_{17,2}(\tau), \quad h_{17,3}(\tau)=g_{17,2}^{2}(\tau),\\
&h_{17,4}(\tau)=\eta^{4}(\tau)\eta^{4}(17\tau), \quad h_{17,5}(\tau)=\frac{1}{24}\big( 289E_{4}(17\tau)-E_{4}(\tau)\big).
\end{align*}

From Theorem \ref{generating-thm}, we know $\mathfrak{A}_{17}(q) \in M_{8}\big(\Gamma_{0}(17), \big(\frac{17}{\cdot}\big)\big)$. By \cite[Theorem 1.34]{Ono}, we find that
$$\dim M_{8}\big(\Gamma_{0}(17), \big(\frac{17}{\cdot}\big)\big)=12.$$

Let
\begin{align*}
&B_{17,1}=f_{17,1}h_{17,1},\,\, B_{17,2}=f_{17,1}h_{17,2}, \,\, B_{17,3}=f_{17,1}h_{17,3}, \,\, B_{17,4}=f_{17,1}h_{17,4},\\
& B_{17,5}=f_{17,1}h_{17,5}, \,\, B_{17,6}=f_{17,2}h_{17,1}, \,\, B_{17,7}=f_{17,2}h_{17,2},  \,\, B_{17,8}=f_{17,2}h_{17,3},\\
&B_{17,9}=f_{17,2}h_{17,4}, \,\, B_{17,10}=f_{17,2}h_{17,5},  \,\, B_{17,11}=\frac{\eta^{17}(\tau)}{\eta(17\tau)},  \quad\text{and}\quad B_{17,12}=\frac{\eta^{17}(17\tau)}{\eta(\tau)}.
\end{align*}
One can verify that $\{B_{17,j}|1\le j \le 12\}$ forms a basis of $M_{8}\big(\Gamma_{0}(17), \big(\frac{17}{\cdot}\big)\big)$. By comparing the Fourier coefficients of $\mathfrak{A}_{17}(q)$ and $B_{17,j}, 1 \le j \le 12$, we deduce the following identity:
\begin{theorem}\label{cphi-17-thm}
We have
\begin{align}\label{CPhi-17}
\mathrm{C}\Phi_{17}(q)=&\frac{1}{(q;q)_{\infty}^{17}}\Big(\frac{1491529}{118}B_{17,1} -\frac{20931981}{236}B_{17,2} -\frac{117030839}{236}B_{17,3}\\
&+ \frac{78308596}{59}B_{17,4} -\frac{988669}{236}B_{17,5}+\frac{424841849}{59}B_{17,6} -\frac{10654955751}{236}B_{17,7} \nonumber \\
&-\frac{17109438979}{236}B_{17,8}+\frac{7515406274}{59}B_{17,9}+\frac{91750275}{236}B_{17,10}\nonumber \\
&+B_{17,11}+6975757441B_{17,12} \Big). \nonumber
\end{align}
\end{theorem}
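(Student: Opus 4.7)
The plan is to follow the strategy already used for $k=11$ in Theorem 3.5, but now in a 12-dimensional space, so the work is essentially a linear-algebra calculation guided by Theorem 2.1. First I would invoke Theorem \ref{generating-thm} with $k=17$ (so $r=8$) to conclude that $\mathfrak{A}_{17}(q)\in M_{8}\bigl(\Gamma_{0}(17),\bigl(\tfrac{17}{\cdot}\bigr)\bigr)$, and cite \cite[Theorem 1.34]{Ono} for the dimension being $12$. The identity \eqref{CPhi-17} is then equivalent to the single assertion
\[
\mathfrak{A}_{17}(q)\;=\;\sum_{j=1}^{12} c_j\,B_{17,j}(\tau)
\]
with the prescribed rational coefficients $c_j$, once it is shown that the $B_{17,j}$ form a basis of the space.

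The verification splits into three tasks. First, one must check that each $B_{17,j}$ genuinely lies in $M_{8}\bigl(\Gamma_{0}(17),\bigl(\tfrac{17}{\cdot}\bigr)\bigr)$. For $B_{17,11}$ and $B_{17,12}$ this is a direct eta-quotient computation on $\Gamma_{0}(17)$ (checking weight, level, character, and holomorphicity at the two cusps using the standard Ligozat-type criteria). For $B_{17,1},\ldots,B_{17,10}$, the ingredients $f_{17,1}, f_{17,2}$ are eta-products of weight $4$, and $h_{17,1},\ldots,h_{17,5}$ are weight-$4$ modular forms on $\Gamma_{0}(17)$: $h_{17,4}$ is a cusp form built from eta-functions, $h_{17,5}$ is the standard weight-$4$ Eisenstein combination of $E_4(\tau)$ and $E_4(17\tau)$, and $h_{17,1}, h_{17,2}, h_{17,3}$ are products involving $g_{17,1}$ (a weight-$2$ Eisenstein-type form using $E_2$) and $g_{17,2}$, whose construction via the Klein forms $\mathcal{E}_a$ in \eqref{Ea} and summation over a group of order $8$ acting on the residues mod $17$ guarantees the requisite level and character. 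Each product has weight $4+4=8$ and trivial or quadratic character matching $\bigl(\tfrac{17}{\cdot}\bigr)$.

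Second, once each $B_{17,j}$ is known to lie in the 12-dimensional space, linear independence is established by writing out the initial $q$-expansions and checking that the $12\times 12$ coefficient matrix has nonzero determinant; this certifies $\{B_{17,j}\}$ as a basis. Third, the actual identification of the coefficients $c_j$ reduces to a single Sturm-bound comparison: by Sturm's theorem, two forms in $M_{8}\bigl(\Gamma_{0}(17),\chi\bigr)$ that agree up to $q^{N}$ for $N=\lfloor \frac{8}{12}[\mathrm{SL}_2(\mathbb{Z}):\Gamma_0(17)]\rfloor=\lfloor 8\cdot 18/12\rfloor=12$ are equal. Thus I would compute the first $13$ Fourier coefficients of $\mathfrak{A}_{17}(q)=\sum_{m\in\mathbf{Z}^{16}} q^{Q(m_1,\ldots,m_{16})}$ from \eqref{notation}, the first $13$ coefficients of each $B_{17,j}$, and solve the resulting $13\times 12$ linear system, which must have the unique solution given in \eqref{CPhi-17}.

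The main obstacle is the bookkeeping: verifying the modularity of $g_{17,2}$, and in particular that the Klein-form product in \eqref{Ea} when summed over $k=0,\ldots,7$ transforms correctly under $\Gamma_{0}(17)$ with the Nebentypus $\bigl(\tfrac{17}{\cdot}\bigr)$, is the only nontrivial input; everything else is a finite deterministic computation. Once the basis is in hand, matching $13$ coefficients produces the stated rational numbers, completing the proof.
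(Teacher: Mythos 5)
Your proposal follows exactly the paper's route: invoke Theorem \ref{generating-thm} to place $\mathfrak{A}_{17}(q)$ in $M_{8}\bigl(\Gamma_{0}(17),\bigl(\tfrac{17}{\cdot}\bigr)\bigr)$, cite the dimension formula for $\dim=12$, verify that the $B_{17,j}$ form a basis, and determine the coefficients by comparing Fourier expansions. In fact you supply more detail than the paper does (the explicit Sturm bound and the modularity checks the paper leaves as ``one can verify''), so the proposal is correct and essentially identical in approach.
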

Note that all the coefficients of $B_{17,j}$, $j\neq 11$, are divisible by $17^2$. Therefore,
\begin{align*}
\mathrm{C}\Phi_{17}(q)\equiv \frac{1}{(q^{17};q^{17})_{\infty}} \pmod{17^2},
\end{align*}
or equivalently,
\[c\phi_{17}(n) \equiv p(n/17) \pmod{17^2}.\]
This is a special case of Andrews' congruence \cite[Theorem 10.2 and Corollary 10.2]{Andrews}
\begin{align}\label{A-eq}
c\phi_{p}(n)\equiv c\phi_{1}(n/p) \pmod{p^2},
\end{align}
which is true for all primes $p$.

In the next section, we will provide an analogue for Kolitsch's identities
 \eqref{c-phi-5-right} and \eqref{c-phi-7-right} for $\mathrm{C}\Phi_{17}(q)$.

\section{$k$-colored generalized Frobenius partitions and ordinary partitions}

Kolitsch's identities \eqref{Kolitsch5}, \eqref{Kolitsch7}, and
Andrews' congruence \eqref{A-eq} show a close relation
between $k$-colored generalized Frobenius partitions and ordinary
partitions. In this section, we will give a more precise description
of the relation and prove
 \eqref{c-phi-5-right}, \eqref{c-phi-7-right}, \eqref{c-phi-11-right}  and \eqref{c-phi-13-right} in a uniform way.
 We will also give an alternative representation for $\mathrm{C}\Phi_{17}(q)$ and illustrate for any prime
 $\ell>3$,
 a general procedure to express $\mathrm{C}\Phi_\ell(q)$ in terms of other modular functions, one of
 which involves generating functions for $p(\ell n -(\ell^2-1)/24)$.

Let
\begin{align*}
F(\tau)\Big|\begin{pmatrix} a& b\\ c&d \end{pmatrix} := F\left(\frac{a\tau+b}{c\tau+d}\right).
\end{align*}
Let $\ell$ be a prime $\ge 5$ and let $\mathcal A_\ell(\tau)$ denote the function
$\mathfrak A_\ell(q)$ when the function $\mathfrak A_\ell(q)$ is viewed as a function of $\tau$ with $q=e^{2\pi i\tau}$. By Theorem \ref{generating-thm},
$$
  \mathfrak A_\ell(q)=\mathcal A_\ell(\tau)=(q;q)_\infty^\ell\mathrm{C}\Phi_\ell(q)
 =\sum_{m_1,\ldots,m_{\ell-1}\in\mathbf Z}q^{Q(m_1,\ldots,m_{\ell-1})}
$$
is a modular form of weight $(\ell-1)/2$ with character
$\chi_{(-1)^{(\ell-1)/2}\ell}$ on $\Gamma_0(\ell)$, where
$Q(m_1,\ldots,m_{\ell-1})$ is the quadratic form defined by \eqref{Q}
and $\chi_d$ is
the character defined by $\chi_d(\cdot)=\left(\frac d\cdot\right)$.
It follows that
$$
  f_\ell(\tau)=\frac{\eta(\ell \tau)}{\eta(\tau)^\ell}\mathcal A_\ell(\tau)
$$
is a modular function on $\Gamma_0(\ell)$. On the other hand,
$\eta(\ell^2\tau)/\eta(\tau)$ is a modular function on $\Gamma_0(\ell^2)$
and by a lemma of A.O.L. Atkin and J. Lehner \cite[Lemma 7]{Atkin-Lehner}, we find that
$$
  \frac{\eta(\ell^2\tau)}{\eta(\tau)}\Big|U_\ell:=
  \frac1\ell\sum_{k=0}^{\ell-1}\frac{\eta(\ell^2\tau)}{\eta(\tau)}\Big|
  \begin{pmatrix}1&k\\0&\ell\end{pmatrix}
 =(q^\ell;q^\ell)_\infty
  \sum_{j=1}^\infty p\left(\ell j-\frac{\ell^2-1}{24}\right)q^j
$$
is also a modular function on $\Gamma_0(\ell)$.

Set%
\begin{equation}\label{g-ell}
  g_\ell(\tau)=1+\ell \frac{\eta(\ell^2\tau)}{\eta(\tau)}\Big|U_\ell=1+\ell(q^\ell;q^\ell)_\infty
  \sum_{j=1}^\infty p\left(\ell j-\frac{\ell^2-1}{24}\right)q^j.
\end{equation}

We now compare the analytic behaviors of $f_\ell(\tau)$ and $g_\ell(\tau)$ at cusps associated with $\Gamma_0(\ell)$.

\begin{lemma} \label{f-g} Let   $\ell\ge 5$ be a prime and
$$\delta_\ell =\frac{\ell^2-1}{24}.$$
At the cusp $\infty$, we have
$$
  f_\ell(\tau)-g_\ell (\tau)=\begin{cases}
  \ell(\ell-p(\ell-\delta_\ell))q+O(q^2), &\text{if }\ell\le 23, \\
  \displaystyle\ell^2q+\left(\frac14\ell^2(\ell^2-2\ell+9)-
  \ell p(2\ell-\delta_\ell)\right)q^2+O(q^3),
 &\text{if }29\le\ell\le 47, \\
  \displaystyle\ell^2q+\frac14\ell^2(\ell^2-2\ell+9)q^2+O(q^3),
 &\text{if }\ell\ge 53. \end{cases}
$$
At the cusp $0$, we have
$$
  (f_\ell(\tau)-g_\ell(\tau))\Big|\begin{pmatrix}0&-1\\\ell&0\end{pmatrix}
 =\begin{cases}
  O(q), &\text{if }\ell=5,7,11,\\
  2q^{-1}-4+O(q), &\text{if }\ell=13,\\
  2q^{-(\ell^2-1)/24+(\ell-1)/2}(1-q+O(q^2)), &\text{if }\ell\ge 17.
  \end{cases}
$$
\end{lemma}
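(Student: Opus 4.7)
The plan is to analyse the two cusps $\infty$ and $0$ of $X_0(\ell)$ separately.

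At $\infty$ the computation is a direct $q$-series manipulation. Using the decomposition $Q(m) = (S^2+T)/2$ with $S = \sum m_i$ and $T = \sum m_i^2$, one counts short vectors to find $\mathfrak A_\ell(q) = 1 + \ell(\ell-1)q + \tfrac14\ell(\ell-1)(\ell-2)(\ell-3)q^2 + O(q^3)$. Multiplying by $1/(q;q)_\infty^\ell$ through order $q^2$ yields $c\phi_\ell(1) = \ell^2$ and $c\phi_\ell(2) = \ell^2(\ell^2-2\ell+9)/4$, and since $(q^\ell;q^\ell)_\infty \equiv 1 \pmod{q^\ell}$ for $\ell \ge 5$ these are also the first two nontrivial coefficients of $f_\ell$. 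Meanwhile $g_\ell = 1 + \ell p(\ell-\delta_\ell)q + \ell p(2\ell-\delta_\ell)q^2 + O(q^3)$ is immediate from the definition. The three cases in the lemma correspond to the inequalities $\delta_\ell \le \ell$ (equivalent to $\ell \le 23$) and $\delta_\ell \le 2\ell$ (equivalent to $\ell \le 47$), which govern whether the partition arguments become negative and hence whether those $p$-values vanish.

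At the cusp $0$, we apply the Fricke involution. For $f_\ell$ this is controlled by the theta transformation formula
\[
\mathcal A_\ell(-1/(\ell\tau)) = (-i\ell\tau)^{(\ell-1)/2}\,\ell^{-1/2}\,\mathcal B_\ell(\tau),
\]
where $\mathcal B_\ell(\tau) = \sum_{m\in\mathbf Z^{\ell-1}} q^{Q'(m)}$ is the theta series of the dual quadratic form $Q'(m) = (\ell T - S^2)/2$ (Gram matrix $\ell A_{\ell-1}^{-1}$). A short minimum-vector analysis identifies the minimum value of $Q'$ as $(\ell-1)/2$, realised by exactly $2\ell$ vectors, namely the $2(\ell-1)$ units $\pm e_i$ together with $\pm(1,\ldots,1)$; hence $\mathcal B_\ell(\tau) = 1 + 2\ell\,q^{(\ell-1)/2} + \ell(\ell-1)\,q^{\ell-2} + \cdots$. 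Combined with the Dedekind-eta transformation for $\eta(\ell\tau)/\eta(\tau)^\ell$ this produces the clean formula
\[
f_\ell(\tau)\big|W_\ell = \frac{q^{-\delta_\ell}}{\ell}\cdot\frac{(q;q)_\infty}{(q^\ell;q^\ell)_\infty^\ell}\cdot\mathcal B_\ell(\tau),
\]
whose Laurent expansion is read off via Euler's pentagonal number theorem.

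For $g_\ell|W_\ell$ I use $(F|U_\ell)(W_\ell\tau) = \ell^{-1}\sum_{k=0}^{\ell-1}F\bigl(-1/(\ell^2\tau)+k/\ell\bigr)$ with $F = \eta(\ell^2\tau)/\eta(\tau)$. The $k=0$ summand evaluates via the Fricke involution at level $\ell^2$ to $\ell^{-1}\eta(\tau)/\eta(\ell^2\tau)$. For $k \neq 0$, pick $\mu,\nu \in \mathbf Z$ with $k\nu - \ell\mu = 1$ and write $-1/(\ell^2\tau) + k/\ell = M_k(\tau - \nu/\ell)$ with $M_k = \left(\begin{smallmatrix}k&\mu\\\ell&\nu\end{smallmatrix}\right) \in SL_2(\mathbf Z)$; the Dedekind-eta multiplier formula converts each such summand into an eta quotient evaluated at the translate $\tau - \nu/\ell$. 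Summing over $k$ assembles $g_\ell|W_\ell$ as an explicit Laurent series. Subtracting, the pole of order $\delta_\ell$ in $f_\ell|W_\ell$ is matched exactly up to order $q^{-\delta_\ell + (\ell-1)/2 - 1}$; beyond that order, the $2\ell\,q^{(\ell-1)/2}$ coefficient of $\mathcal B_\ell$ contributes (after the factor $1/\ell$) a surviving leading term $2q^{-\delta_\ell+(\ell-1)/2}$ followed by $-2q^{-\delta_\ell+(\ell-1)/2+1}$ from the $-q$ coefficient of $(q;q)_\infty$. For $\ell \le 11$ this surviving pattern has non-negative exponent and further cancels to $O(q)$ (consistent with the identities $f_\ell = g_\ell$ proved as consequences in Section~3). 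For $\ell = 13$ the analogous analysis gives $2q^{-1} - 4 + O(q)$, the constant $-4$ arising from the interaction of the $q^7$ pentagonal coefficient with the contribution from $g_\ell|W_\ell$ at that order. For $\ell \ge 17$ the clean form $2q^{-\delta_\ell+(\ell-1)/2}(1 - q + O(q^2))$ emerges directly. The main obstacle is the Dedekind-eta bookkeeping for the $k \ne 0$ summands of $g_\ell|W_\ell$: one must verify that after summing over $k$ the contributions exactly cancel the $\delta_\ell - (\ell-1)/2$ lowest-order Laurent terms of $f_\ell|W_\ell$.
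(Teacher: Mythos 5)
Your overall strategy coincides with the paper's: at $\infty$ you count the vectors with $Q=1$ and $Q=2$ and divide by $(q;q)_\infty^\ell$, and at $0$ you apply the theta transformation formula to the dual lattice with Gram matrix $\ell A_{\ell-1}^{-1}$ and expand $g_\ell\big|U_\ell\big|W_\ell$ as a sum over $k$. The $\infty$-computation, the formula $f_\ell\big|W_\ell=\ell^{-1}q^{-\delta_\ell}(q;q)_\infty(q^\ell;q^\ell)_\infty^{-\ell}\mathcal B_\ell(\tau)$, and the minimum-vector count $\mathcal B_\ell=1+2\ell q^{(\ell-1)/2}+\cdots$ are all correct and match the paper's proof of Lemma \ref{f-g}.

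The cusp-$0$ analysis, however, has a genuine gap. First, your closing sentence asks the $k\ne 0$ summands of $g_\ell\big|W_\ell$ to cancel the lowest-order Laurent terms of $f_\ell\big|W_\ell$; this cannot happen. Each $k\ne 0$ summand is a root of unity times $\ell^{-1/2}\,\eta(\tau)/\eta(\tau-k'/\ell)$, which is holomorphic at $\infty$, i.e.\ a constant plus $O(q)$. The entire polar part of $f_\ell\big|W_\ell$ is cancelled by the single $k=0$ summand $\ell^{-1}\eta(\tau)/\eta(\ell^2\tau)$: the leading ``$1$'' of $\mathcal B_\ell$ pairs against it, and the discrepancy between $(q^\ell;q^\ell)_\infty^{-\ell}$ and $(q^{\ell^2};q^{\ell^2})_\infty^{-1}$ only enters at order $q^{-\delta_\ell+\ell}$, beyond the precision the lemma requires. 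Second, and more importantly, the constant contributed by the $k\ne 0$ summands must actually be evaluated, not just ``assembled'': the paper shows, using the eta multiplier system and Gauss's evaluation of $\sum_n\left(\frac n\ell\right)e^{2\pi in/\ell}$, that this sum equals $\left(\frac{12}\ell\right)+O(q)$. That value ($-1$ for $\ell=5,7$ and $+1$ for $\ell=11,13$) is exactly what makes the constant term of $(f_\ell-g_\ell)\big|W_\ell$ vanish for $\ell=5,7,11$ and equal $-4$ for $\ell=13$; your proposal never identifies it, and for $\ell\le 11$ you instead appeal to the identities $f_\ell=g_\ell$, which is backwards here since Theorem \ref{S5main}(a) is meant to rederive those identities from this lemma. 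Until the Gauss-sum constant is computed, the $\ell=5,7,11$ and $\ell=13$ cases of the statement at the cusp $0$ are not established.
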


\begin{proof}
  It is clear from the definition of  $g_{\ell}(\tau)$ that
  \begin{equation} \label{g at infinity}
  \begin{split}
    g_{\ell}(\tau)&=1+{ \ell (q^{\ell};q^{\ell})_{\infty}}\sum_{n\ge(\ell^2-1)/24\ell} p(\ell n-\delta_\ell)q^n \\
  &=\begin{cases}
    1+\ell p(\ell-\delta_\ell)q+O(q^2), &\text{if }\ell\le 23, \\
    1+\ell p(2\ell-\delta_\ell)q^2+O(q^3),
    &\text{if }29\le\ell\le 47, \\
    1+O(q^3), &\text{if }\ell\ge 53. \end{cases}
  \end{split}
  \end{equation}
  On the other hand, we have
  \begin{equation*}
  \begin{split}
    Q(m_1,\ldots,m_{\ell-1})&=(m_1^2+\cdots+m_{\ell-1}^2)
    +\frac12\sum_{i\neq j}m_im_j \\
  &=\frac12(m_1^2+\ldots+m_{\ell-1}^2)+\frac12(m_1+\cdots+m_{\ell-1})^2.
  \end{split}
  \end{equation*}
   From this, we see that $Q(m_1,\ldots,m_{\ell-1})=1$ if and only if
  exactly one of $m_j$ is $\pm 1$ and the other are all $0$, or
  $m_i=1$ and $m_j=-1$ for some $i,j$ with $i\neq j$ and all others
  are $0$. Likewise, we can check that $Q(m_1,\ldots,m_{\ell-1})=2$ if
  and only if there are two $1$'s and two $-1$'s among $m_j$, or
  there are two $1$'s and one $-1$ among $m_j$, or there are two
  $-1$'s and one $1$ among $m_j$. Thus, the number of integer
  solutions of $Q(m_1,\ldots,m_{\ell-1})=2$ is
  $$
    \frac14(\ell-1)(\ell-2)(\ell-3)(\ell-4)+2\cdot
    \frac12(\ell-1)(\ell-2)(\ell-3)=\frac14\ell(\ell-1)(\ell-2)(\ell-3).
  $$
  Consequently, we have
  $$
    \mathcal A_\ell(\tau)=1+\ell(\ell-1)q+\frac14\ell(\ell-1)
    (\ell-2)(\ell-3)q^2+\cdots
  $$
  and
  \begin{equation*}
  \begin{split}
    f_\ell(\tau)&=\frac{(q^\ell;q^\ell)_\infty}{(1-q-q^2+\cdots)^\ell}
   \left(1+\ell(\ell-1)q+\frac14\ell(\ell-1)(\ell-2)(\ell-3)q^2+\cdots\right)\\
  &=1+\ell^2q+\frac14\ell^2(\ell^2-2\ell+9)q^2+\cdots.
  \end{split}
  \end{equation*}
  Together with \eqref{g at infinity}, this yields the first half of
  the lemma.
  We next consider the analytic behavior of $f_\ell(\tau)-g_\ell(\tau)$ at $0$.

  Recall that if $\Lambda$ is an even integral lattice of rank $n$ and
  $\Lambda'$ is its dual lattice, then their theta series
  $\theta_\Lambda(\tau)$ and $\theta_{\Lambda'}(\tau)$ are related by the
  transformation formula (see \cite[Proposition 16, Chapter VII]{Serre})
  \begin{equation} \label{theta transformation}
    \theta_{\Lambda'}(-1/\tau)=\left(\frac \tau i\right)^{n/2}
    \nu(\Lambda)\theta_\Lambda(\tau),
  \end{equation}
  where $\nu(\Lambda)$ is the volume of the lattice $\Lambda$. Here, we let
  $\Lambda$ be the lattice whose Gram matrix is $\ell
  A_{\ell-1}^{-1}$, where $A_n$ and $A_n^{-1}$ are given by
  \eqref{An} and \eqref{An-1}, respectively.
  The determinant of $\ell A_{\ell-1}^{-1}$ is
  $\ell^{\ell-1}/\det(A_{\ell-1})=\ell^{\ell-2}$. Hence
  $$
    \nu(\Lambda)=\ell^{\ell/2-1}.
  $$
  Let $\mathcal B_\ell(\tau)$ be the theta series of $\Lambda$ and observe
  that the theta series of $\Lambda'$ is  $\mathcal A_{\ell}(\tau/\ell)$.
 Thus, by \eqref{theta transformation}, we have
  $$
    {\mathcal A_{\ell}}\left(-\frac1{\ell \tau}\right)
   =\ell^{\ell/2-1}\left(\frac \tau i\right)^{(\ell-1)/2}
    \mathcal B_\ell(\tau).
  $$
  Together with
  $$
    \eta\left(-\frac1{\ell \tau}\right)
   =\sqrt{\frac{\ell \tau}i}\eta(\ell \tau), \quad\text{and}\quad
  \eta\left(-\frac1 \tau\right)=\sqrt{\frac \tau i}\eta(\tau),
  $$
we deduce that
  \begin{equation} \label{equation: f at 0}
    f_\ell(\tau)\Big|\begin{pmatrix}0&-1\\\ell&0\end{pmatrix}
   =\frac1\ell\frac{\eta(\tau)}{\eta(\ell \tau)^\ell}
    \mathcal B_\ell(\tau).
  \end{equation}

  We now consider $g_\ell(-1/\ell \tau)$.

  We have
  $$
    \ell\left(\frac{\eta(\ell^2\tau)}{\eta(\tau)}\Big|U_\ell\right)
    \Big|\begin{pmatrix}0&{-1}\\ \ell&0\end{pmatrix}
   =\sum_{k=0}^{\ell-1}\frac{\eta(\ell^2\tau)}{\eta(\tau)}\Big|
    \begin{pmatrix}{k\ell}&{-1}\\{\ell^2}&0\end{pmatrix}.
  $$
  For $k=0$, the transformation formula for $\eta(\tau)$ yields
  \begin{equation} \label{equation: g at 0 1}
    \frac{\eta(\ell^2\tau)}{\eta(\tau)}\Big|
    \begin{pmatrix}0&{-1}\\{\ell^2}&0\end{pmatrix}
   =\frac1\ell\frac{\eta(\tau)}{\eta(\ell^2\tau)}.
  \end{equation}
  For $1 \le k \le \ell-1$, we find that
  \begin{align}\label{addeq-eta-1}
    \eta\left(\ell^2\frac{k\ell \tau-1}{\ell^2\tau}\right)
   =\eta\left(k\ell-\frac1 \tau \right)
   =e^{2\pi ik\ell/24}\eta(-1/\tau)=e^{2\pi ik\ell/24}\sqrt{\frac \tau i}\eta(\tau).
  \end{align}

  Next, since $(k,\ell)=1$, there exist integers $a$ and $k'$ such that $kk'-a\ell=1$.
  This implies that
  \begin{equation}\label{addeq-eta-2}
    \eta\left(\frac{k\ell \tau-1}{\ell^2\tau}\right)
  =\eta\left(\tau-\frac{k'}{\ell}\right)\Big|
    \begin{pmatrix} k & a\\{\ell}&k'\end{pmatrix}\\
    =\left(\frac{k'}\ell\right) i^{(1-\ell)/2}
     e^{2\pi i\ell(k+k')/24}\sqrt{\frac{\ell \tau}i}
     \eta\left(\tau-\frac{k'}\ell\right).
  \end{equation}
  It follows {from \eqref{addeq-eta-1} and \eqref{addeq-eta-2}} that
  \begin{equation*}
  \begin{split}
    \frac{\eta(\ell^2\tau)}{\eta(\tau)}\Big|
    \begin{pmatrix}{k\ell}&{-1}\\{\ell^2}&0\end{pmatrix}
  &=\frac1{\sqrt\ell}\left(\frac{k'}\ell\right)
    i^{(\ell-1)/2}e^{-2\pi i\ell k'/24}\frac{\eta(\tau)}
    {\eta(\tau-k'/\ell)} \\
  &=\frac1{\sqrt\ell}\left(\frac{k'}\ell\right)
    i^{(\ell-1)/2}e^{-2\pi imk'/\ell}+O(q),
\end{split}
\end{equation*}
where $m=(\ell^2-1)/24$. Hence,
\begin{equation} \label{equation: g at 0 2}
\begin{split}
  \sum_{k=1}^{\ell-1}\frac{\eta(\ell^2\tau)}{\eta(\tau)}\Big|
  \begin{pmatrix}{k\ell}&{-1}\\{\ell^2}&0\end{pmatrix}
 &=\frac{i^{(\ell-1)/2}}{\sqrt\ell}\sum_{k=1}^{\ell-1}
   \left(\frac{k'}\ell\right)
   e^{-2\pi imk'/\ell}+O(q) \\
 &=\frac{i^{(\ell-1)/2}}{\sqrt\ell}\left(\frac{-m}\ell\right)
   \sum_{n=1}^{\ell-1}
   \left(\frac n\ell\right)e^{2\pi in/\ell}+O(q) \\
 &=i^{(\ell-1)/2}\left(\frac{-m}\ell\right)\begin{cases}
   1+O(q), &\text{if }\ell\equiv 1\mod 4, \\
   i+O(q), &\text{if }\ell\equiv 3\mod 4, \end{cases} \\
 &=\left(\frac8\ell\right)\left(\frac{-m}\ell\right)+O(q) \\
 &=\left(\frac{12}\ell\right)+O(q),
\end{split}
\end{equation}
where we have used Gauss' result \cite[Section 9.10]{Apostol} in our third equality.
Combining \eqref{equation: f at 0}, \eqref{equation: g at 0 1}, and
\eqref{equation: g at 0 2}, we find that
\begin{equation} \label{equation: f-g at 0}
\begin{split}
  (f_\ell(\tau)-g_\ell(\tau))\Big|\begin{pmatrix}0&{-1}\\ \ell&0\end{pmatrix}
&=\frac1\ell q^{-(\ell^2-1)/24}(q;q)_\infty\left(
  \frac{{ \mathcal B_{\ell}(\tau)}}{(q^\ell;q^\ell)_\infty^\ell}
 -\frac1{(q^{\ell^2};q^{\ell^2})_\infty}
  \right)\\
&\qquad-\left(\frac{12}\ell\right)-1+O(q).
\end{split}
\end{equation}
We now claim that
\begin{equation} \label{B expansion}
  \mathcal B_\ell(\tau)=1+2\ell q^{(\ell-1)/2}+\cdots,
\end{equation}
so that
$$
  \frac{\mathcal B_\ell(\tau)}{(q^\ell;q^\ell)_\infty^\ell}
 -\frac1{(q^{\ell^2};q^{\ell^2})_\infty}=2\ell q^{(\ell-1)/2}+\cdots.
$$

Recall that $\mathcal B_\ell(\tau)$ is defined to be the theta series
associated to the lattice whose Gram matrix is $\ell A_{\ell-1}^{-1}$,
where $A_n^{-1}$ is given by \eqref{An-1}. In other words, we have
$$
  \mathcal B_\ell(\tau)=\sum_{m_1,\ldots,m_{\ell-1}\in\mathbf Z}
  q^{Q'(m_1,\ldots,m_{\ell-1})}, \quad q=e^{2\pi i\tau},
$$
where
\begin{equation*}
\begin{split}
  Q'(m_1,\ldots,m_{\ell-1})
&=\frac{\ell-1}2(m_1^2+\cdots+m_{\ell-1}^2)-{  \frac{1}{2}}\sum_{i\neq j}m_im_j \\
&=\frac12\left(\ell(m_1^2+\cdots+m_{\ell-1}^2)-(m_1+\cdots+m_{\ell-1})^2\right).
\end{split}
\end{equation*}
For each $(m_1,\ldots,m_{\ell-1})\in\mathbf
Z^{\ell-1}\backslash\{0\}$, let $r$ be the number of nonzero entries
in the tuple. By the Cauchy-Schwarz inequality, we have
$$
  (m_1+\cdots+m_{\ell-1})^2\le r(m_1^2+\cdots+m_{\ell-1}^2).
$$
Then
$$
  Q'(m_1,\ldots,m_{\ell-1})\ge\frac12
  (\ell-r)(m_1^2+\cdots+m_{\ell-1}^2)\ge\frac12(\ell-r)r
  \ge\frac{\ell-1}2.
$$
Therefore, the coefficient of $q^j$ in $\mathcal B_\ell(\tau)$ vanishes for
$j=1,\ldots,(\ell-1)/2-1$. Also, the contribution to the
$q^{(\ell-1)/2}$ term comes from the cases where $r=1$ or $r=\ell-1$
and equality holds for each of the inequality above.
In other words, the contribution to $q^{(\ell-1)/2}$ comes from the
tuples where exactly one of $m_j$ is $\pm 1$ and all the
others are $0$ or $(m_1,\ldots,m_{\ell-1})=\pm(1,\ldots,1)$. We
conclude that the coefficient of $q^{(\ell-1)/2}$ in $\mathcal B_\ell(\tau)$
is $2\ell$. This proves the claim \eqref{B expansion}.

For the cases $\ell=5$ and $\ell=7$, we have
$(\ell^2-1)/24<(\ell-1)/2$ and $\left(\frac{12}\ell\right)=-1$.
Therefore,
\begin{equation} \label{equation: temp}
  \left(f_\ell(\tau)-g_\ell(\tau)\right)\Big|\begin{pmatrix}0&-1\\
  \ell&0\end{pmatrix}=O(q).
\end{equation}
When $\ell=11$, we have $(\ell^2-1)/24=(\ell-1)/2$ and
$\left(\frac{12}\ell\right)=1$. Again, \eqref{equation: f-g at 0}
implies that \eqref{equation: temp} holds in this case. For other
cases, we note that in general, we have
$$
  \mathcal B_\ell(\tau)=1+2\ell q^{(\ell-1)/2}+\ell(\ell-1)q^{\ell-2}+\cdots,
$$
and hence,
$$
q^{-(\ell^2-1)/24}(q;q)_\infty\left(
  \frac{\mathcal B_\ell(\tau)}{(q^\ell;q^\ell)_\infty^\ell}
 -\frac1{(q^{\ell^2};q^{\ell^2})_\infty}
  \right)=2\ell q^{-(\ell^2-1)/24+(\ell-1)/2}(1-q-q^2+\cdots)
$$
for $\ell\ge 11$. When $\ell=13$, we have
$-(\ell^2-1)/24+(\ell-1)/2=-1$ and $\left(\frac{12}{13}\right)=1$.
Then from \eqref{equation: f-g at 0}, we deduce that
$$
  \left(f_\ell(\tau)-g_\ell(\tau)\right)\Big|\begin{pmatrix}0&-1\\
  \ell&0\end{pmatrix}=2q^{-1}-4+O(q).
$$
For other primes $\ell\ge17$, \eqref{equation: f-g at 0} yields
$$
  \left(f_\ell(\tau)-g_\ell(\tau)\right)\Big|\begin{pmatrix}0&-1\\
  \ell&0\end{pmatrix}=2q^{-(\ell^2-1)/24+(\ell-1)/2}(1-q+O(q^2))
$$
instead. This completes the proof of the lemma.
\end{proof}

\begin{theorem}\label{S5main} Let $\ell\ge 5$ be a prime. Let
  $$
    f_\ell(\tau)=(q^\ell;q^\ell)_\infty\mathrm{C}\Phi_\ell(q),
  $$
  and
  $$
    g_\ell(\tau)=1+\ell(q^\ell;q^\ell)_\infty\sum_{n=1}^\infty
    p\left(\ell n-\frac{\ell^2-1}{24}\right)q^n.
  $$
  \begin{enumerate}[{\rm (a)}]
  \item \label{a} If $\ell=5,7,11$, then $f_\ell(\tau)=g_\ell(\tau)$.
  \item \label{b} If $\ell=13$, then
  $$
    f_{13}(\tau)=g_{13}(\tau)+26\frac{\eta^2(13\tau)}{\eta^2(\tau)}.
  $$
  \item \label{c}  If $\ell\ge 17$, then
\begin{equation}\label{h-ell}
    h_\ell(\tau)=f_\ell(\tau)-g_\ell(\tau)-2\ell^{(\ell-11)/2}\left(
    \frac{\eta(\ell \tau)}{\eta(\tau)}\right)^{\ell-11}
  \end{equation}
  is a modular function on $\Gamma_0(\ell)$ with a zero at
  $\infty$ and a pole of order $(\ell+1)(\ell-13)/24$ at $0$ and
  $$
    h_\ell(\tau)(\eta(\tau)\eta(\ell \tau))^{\ell-13}
  $$
  is a holomorphic modular form of weight $\ell-13$ with a zero of
  order $(\ell-1)(\ell-11)/24$ at $\infty$.
  \item \label{d} We have
  $$
    h_\ell(\tau)\equiv 0\mod\begin{cases}
    170, &\text{when }\ell=17, \\
    266, &\text{when }\ell=19, \\
    506, &\text{when }\ell=23. \end{cases}
  $$

  \item \label{e} For any prime $\ell>11$,
  $$h_\ell(\tau)\equiv \ell F_\ell(\tau) \pmod{\ell^2}$$ where
  $F_\ell(\tau)$ is a non-zero modular form of weight $\ell-1$ on $\text{SL}(2,\mathbf Z)$.

  \end{enumerate}
\end{theorem}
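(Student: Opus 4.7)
The overarching strategy rests on the fact that both $f_\ell(\tau) = (q^\ell;q^\ell)_\infty \mathrm{C}\Phi_\ell(q)$ and $g_\ell(\tau)$ are modular functions on $\Gamma_0(\ell)$, and the modular curve $X_0(\ell)$ has only two cusps, $\infty$ and $0$. Hence the difference $f_\ell - g_\ell$ is pinned down by Lemma \ref{f-g}, which records its principal parts at both cusps. For parts (a)--(c) the plan is to subtract an explicit $\eta$-quotient whose leading behavior at both cusps matches that of $f_\ell - g_\ell$, after which the residual is a modular function holomorphic at every cusp and vanishing at $\infty$, hence identically zero.

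Concretely, for (a) with $\ell \in \{5,7,11\}$, the equalities $p(4)=5$, $p(5)=7$, $p(6)=11$ force the leading coefficient $\ell(\ell - p(\ell-\delta_\ell))q$ at $\infty$ from Lemma \ref{f-g} to vanish; combined with the $O(q)$ behavior at $0$ given there, we conclude $f_\ell = g_\ell$. For (b) with $\ell = 13$, the Hauptmodul $T = \eta^2(13\tau)/\eta^2(\tau)$ has leading term $q$ at $\infty$ and $\frac{1}{13}q^{-1}$ at $0$, so the scalar $26 = 2\cdot 13$ simultaneously cancels the $26q$ term at $\infty$ and the $2q^{-1}$ term at $0$ found in Lemma \ref{f-g}. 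For (c) with $\ell \ge 17$, one checks that $2\ell^{(\ell-11)/2}(\eta(\ell\tau)/\eta(\tau))^{\ell-11}$ vanishes at $\infty$ to order $(\ell-11)(\ell-1)/24 \ge 4$ (so the simple zero there is preserved) and has leading term $2q^{-(\ell-1)(\ell-11)/24}$ at $0$, matching the pole of $(f_\ell-g_\ell)|_{S_\ell}$. A direct expansion of the next-order terms ($1-q+O(q^2)$ in Lemma \ref{f-g} versus $1-(\ell-11)q+\cdots$ in the $\eta$-quotient) shows that only the top term of the pole cancels, leaving pole order exactly $(\ell-1)(\ell-11)/24 - 1 = (\ell+1)(\ell-13)/24$. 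Since $\eta(\tau)\eta(\ell\tau)$ has equal orders $(\ell+1)/24$ at both cusps of $\Gamma_0(\ell)$, multiplying $h_\ell$ by $(\eta(\tau)\eta(\ell\tau))^{\ell-13}$ absorbs the pole at $0$ and produces a holomorphic form of weight $\ell - 13$ with vanishing order $1 + (\ell-13)(\ell+1)/24 = (\ell-1)(\ell-11)/24$ at $\infty$.

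For (d) and (e) I would start from Andrews' congruence $f_\ell \equiv 1 \pmod{\ell^2}$ and the decomposition $g_\ell - 1 = \ell S$ with $S = (\eta(\ell^2\tau)/\eta(\tau))|U_\ell$. For $\ell > 13$ we have $2\ell^{(\ell-11)/2} \equiv 0 \pmod{\ell^2}$, so $h_\ell \equiv -\ell S \pmod{\ell^2}$ and hence $h_\ell/\ell \equiv -S \pmod \ell$. The Frobenius-type congruence $(q^{\ell^2};q^{\ell^2})_\infty \equiv (q;q)_\infty^{\ell^2} \pmod \ell$ then yields $\eta(\ell^2\tau)/\eta(\tau) \equiv \eta^{\ell^2-1}(\tau) \pmod \ell$, a holomorphic form on $SL(2,\mathbf{Z})$ of weight $(\ell^2-1)/2$ with trivial character (since $24 \mid \ell^2-1$). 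Invoking the Serre--Swinnerton-Dyer filtration inequality for the $U_\ell$ operator on mod-$\ell$ modular forms and iterating brings the filtration of $\eta^{\ell^2-1}|U_\ell \pmod \ell$ down to $\ell-1$, producing the form $F_\ell$ of (e); nonvanishing is verified by computing its leading $q$-coefficient, which equals $\nu_\ell/\ell \pmod \ell$. Part (d) then requires a finite case-by-case verification for $\ell \in \{17,19,23\}$: the form $h_\ell (\eta(\tau)\eta(\ell\tau))^{\ell-13}$ sits in the small-dimensional cusp-form space $S_{\ell-13}(\Gamma_0(\ell))$ with the high vanishing at $\infty$ required by (c) (effectively one-dimensional for these primes), and $\nu_\ell$ is precisely its leading Fourier coefficient, which yields the stated divisibility. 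The principal obstacle is (e): controlling the filtration of $\eta^{\ell^2-1}|U_\ell$ in characteristic $\ell$ and pinning it at exactly $\ell - 1$ will require careful iteration of Serre's theory, whereas (a)--(d) otherwise reduce to direct cusp-expansion matching or to finite computations in explicit modular form spaces.
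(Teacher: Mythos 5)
Your treatment of parts (a)--(c) follows the paper's argument essentially verbatim: compare principal parts at the two cusps of $X_0(\ell)$ via Lemma \ref{f-g}, subtract the matching $\eta$-quotient, and conclude because a modular function on $\Gamma_0(\ell)$ holomorphic at both cusps and vanishing at $\infty$ is identically zero. Your computations of the residual pole order $(\ell-1)(\ell-11)/24-1=(\ell+1)(\ell-13)/24$ at $0$ and of the vanishing order of $h_\ell(\tau)(\eta(\tau)\eta(\ell\tau))^{\ell-13}$ at $\infty$ agree with the paper's. Part (d) is in both accounts a finite verification (the paper simply invokes Sturm's criterion), and your reduction $h_\ell\equiv-\ell\,(q^\ell;q^\ell)_\infty\sum_{j\ge 1} p(\ell j-\delta_\ell)q^j\pmod{\ell^2}$ at the start of (e), with $\delta_\ell=(\ell^2-1)/24$, is exactly the paper's first step there.

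The genuine gap is your proof of the nonvanishing of $F_\ell$ in part (e). You propose to verify nonvanishing by computing the leading $q$-coefficient, which you identify as $\nu_\ell/\ell\pmod\ell$, i.e.\ as $\pm p(\ell-\delta_\ell)\bmod\ell$. But for every prime $\ell\ge 29$ one has $\ell-\delta_\ell<0$, so $p(\ell-\delta_\ell)=0$ and the coefficient of $q$ in $F_\ell$ vanishes identically mod $\ell$ (this is already visible in the case distinction of Lemma \ref{f-g}); more generally the coefficient of $q^n$ is $\equiv \pm p(\ell n-\delta_\ell)$, which is $0$ for all $n<\delta_\ell/\ell\approx\ell/24$, so no inspection of leading coefficients, and no finite computation, can settle nonvanishing for all primes $\ell>11$. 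The statement $F_\ell\not\equiv 0\pmod\ell$ is equivalent to the nonexistence of a Ramanujan-type congruence $p(\ell n-\delta_\ell)\equiv 0\pmod\ell$ for all $n$, and this is exactly the deep theorem of Ahlgren and Boylan that the paper cites at this point; without that input (or an equivalent one) your argument does not close. A secondary, smaller point: your plan to produce the weight-$(\ell-1)$ form by iterating the Serre--Swinnerton-Dyer filtration bound on $\eta^{\ell^2-1}\,|\,U_\ell$ amounts to re-deriving the result the paper simply quotes (Serre's proof that $(q^\ell;q^\ell)_\infty\sum_j p(\ell j-\delta_\ell)q^j=F_\ell+\ell E_\ell$ with $F_\ell$ a cusp form of weight $\ell-1$ on $\mathrm{SL}(2,\mathbf Z)$); the filtration route is workable, since the filtration is $\equiv 0\pmod{\ell-1}$ and bounded by roughly $3\ell/2$, but it is additional work to be carried out carefully rather than a finished step.
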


\begin{proof} We first remark that the functions $f_\ell(\tau)$ and $g_\ell(\tau)$ are both
  holomorphic on the upper half-plane. Thus, to prove that $f_\ell(\tau)=g_\ell(\tau)$
  for the cases $\ell=5,7,11$, we only need to verify that $f_\ell(\tau)-g_\ell(\tau)$
  does not have poles at cusps and $f_\ell(\tau)-g_\ell(\tau)$ vanishes at one
  particular point in these three cases.
  Indeed, by Lemma \ref{f-g}, $f_\ell(\tau)-g_\ell(\tau)$ vanishes at both cusps in the three cases
 since  $p(\ell-\delta_\ell)=\ell$ for $\ell=5, 7 $ and $11$.
 This proves \eqref{a}.
 We remark that in fact, it suffices to know that $f_\ell(\tau)-g_\ell(\tau)$ has no pole at
 the cusp 0 since it would mean that $f_\ell(\tau)-g_\ell(\tau)$ is a constant. Since the expansion at $\infty$ begins
 with $\ell\left(\ell-p(\ell-\delta_\ell)\right)q$, the only possibility that $f_\ell(\tau)-g_\ell(\tau)$ is  a constant is when
 $p(\ell-\delta_\ell)=\ell.$ In other words, without listing out the partitions of $5, 7$ and 11, we know that
 $p(4)=5, p(5)=7$ and $p(6)=11$.

  We next consider the case $\ell=13$. By Lemma \ref{f-g},
  the Fourier expansion of $f_\ell(\tau)-g_\ell(\tau)$ at $0$ is
  $$
    2q^{-1}-4+\cdots.
  $$
  Now we observe that $\eta(13\tau)^2/\eta(\tau)^2$ is also a modular
  function on $\Gamma_0(13)$ and satisfies
  $$
    \frac{\eta^2(13\tau)}{\eta^2(\tau)}\Big|\begin{pmatrix}0&-1\\13&0
    \end{pmatrix}=\frac1{13}\frac{\eta^2(\tau)}{\eta^2(13\tau)}
   =\frac1{13}(q^{-1}-2+\cdots).
  $$
  Therefore, $f_\ell(\tau)-g_\ell(\tau)-26\eta(13\tau)^2/\eta(\tau)^2$ is a modular function
  on $\Gamma_0(13)$ that has no poles and vanishes at the cusps.
  We conclude that $f_\ell(\tau)-g_\ell(\tau)-26\eta(13\tau)^2/\eta(\tau)^2$ is identically
  $0$ and the proof of \eqref{b} is complete.

  Similarly, for primes $\ell\ge 17$, using Lemma \ref{f-g}
  and the transformation formula of $\eta(\tau)$, we find that
  \begin{equation*}
  \begin{split}
    h_\ell(\tau)\Big|\begin{pmatrix}0&-1\\\ell&0\end{pmatrix}
  &=2q^{-(\ell^2-1)/24+(\ell-1)/2}((1-q+O(q^2))-(1-(\ell-11)q+O(q^2)))\\
  &=2(\ell-12)q^{-(\ell+1)(\ell-13)/24}+\cdots.
  \end{split}
  \end{equation*}
  Therefore, $h_\ell(\tau)$ has a pole of order $(\ell+1)(\ell-13)/24$ for
  $\ell\ge 17$. From Lemma \ref{f-g}, it is clear that $h_\ell(\tau)$
  has a zero at $\infty$. It follows that
  $h_\ell(\tau)(\eta(\tau)\eta(\ell \tau))^{\ell-13}$ is a holomorphic
  modular form of weight $\ell-13$ on $\Gamma_0(\ell)$ and this
  completes the proof of \eqref{c}.

  The congruences in \eqref{d} can be verified using Sturm's criterion
  \cite{Sturm}.

Next, observe from \eqref{andrews-cphip} that
$$f_\ell(\tau) \equiv 1\pmod{\ell^2}.$$ For $\ell>13$,
$$h_\ell(\tau) \equiv f_\ell(\tau)-g_\ell(\tau) \equiv -\ell(q^\ell;q^\ell)_\infty \sum_{j=1}^\infty p\left(\ell j-\frac{\ell^2-1}{24}\right)q^j \pmod{\ell^2}.$$
It is known that
(see \cite[p. \!\!\! 157, Corollary 5.15.1]{Andrews-Berndt-III} for a proof given by J.P. Serre)
$$(q^\ell;q^\ell)_\infty \sum_{j=1}^\infty p\left(\ell j-\frac{\ell^2-1}{24}\right)q^j = F_\ell(\tau) +\ell E_\ell(\tau)$$ where
$F_\ell(\tau)$ is a cusp form on $\text{SL}(2,\mathbf Z)$ of weight $\ell-1$.
This implies that
$$h_\ell(\tau) \equiv -\ell F_\ell(\tau) \pmod{\ell^2}.$$
The fact that $F_\ell(\tau)$ is non-zero follows from the result of S. Ahlgren and M. Boylan \cite[Theorem 1]{Ahlgren-Boylan}.
\end{proof}

We now give another representation for $\mathrm{C}\Phi_{17}(q)$.
Let
$$h_1(\tau) = \eta^8(17\tau)\sum_{k=0}^7\mathcal E_{3^k}^{-1}(\tau)\mathcal E_{2\cdot 3^k}^{-2}(\tau)\mathcal E^{-1}_{5\cdot 3^k}(\tau)
=q^4+3q^5+8q^6+5q^7+\cdots$$
and
$$h_2(\tau) =\eta^8(17\tau)\sum_{k=0}^7\mathcal E_{7\cdot 3^k}(\tau)\mathcal E_{3^k}^{-2}(\tau)\mathcal E^{-1}_{3^{k+1}}(\tau)
\mathcal E^{-1}_{5\cdot 3^{k}}(\tau)\mathcal E^{-1}_{8\cdot 3^{k}}(\tau)
=q^4+q^5+8q^6+\cdots,$$
where $\mathcal E_a(\tau)$ is given by \eqref{Ea}.
Then $$h_{17}(\tau)\eta^4(\tau)\eta^4(17\tau)=595h_1(\tau)-425h_2(\tau).$$ This gives the identity
\begin{align}\label{c-phi-17-right}\mathrm{C}\Phi_{17}(q) &= \frac{1}{(q^{17};q^{17})_\infty} + 17\sum_{j=1}^\infty p(17j-12)q^j+2\cdot 17^3 q^4\frac{(q^{17};q^{17})^5_\infty}{(q;q)_\infty^6} \nonumber\\
&\qquad\qquad +\frac{1}{q^3(q;q)_\infty^4(q^{17};q^{17})_\infty^5}\left(595h_1(\tau)-425h_2(\tau)\right).\end{align}
Note the simplicity of \eqref{c-phi-17-right} as compared to \eqref{CPhi-17}.
Identities similar to  \eqref{c-phi-17-right} exist for $k=19, 23$ and other primes. These identities involve the function
$\mathcal E_a(\tau)$.

\section{Generating function of $c\phi_{k}(n)$ for $k=9$ and $15$}

There are two cases to consider in this section, namely, $k=9 $ and 15.

\subsection{Case $k=9$}
\quad
\medskip

Let
\begin{align*}
E_{9,1}&=\frac{1}{240}+\sum_{k=1}^{\infty}\frac{k^3q^k}{1-q^k}, \\
E_{9,2}&=\frac{1}{240}+\sum_{k=1}^{\infty}\frac{k^3q^{3k}}{1-q^{3k}}, \\
E_{9,3}&=\frac{1}{240}+\sum_{k=1}^{\infty}\frac{k^3q^{9k}}{1-q^{9k}} \intertext{and}
E_{9,4}&=\sum_{n=1}^{\infty}\left(\frac{n}{3}\right)\sum_{d|n}d^3q^n.
\end{align*}
These are Eisenstein series of $M_{4}(\Gamma_{0}(9))$.

\begin{theorem}\label{cphi9-thm}
We have
\begin{align}\label{CPhi9-product}
\mathrm{C}\Phi_{9}(q)&=324q\frac{(q^3;q^3)_\infty^{8}}{(q;q)_\infty^{9}}+
19683q^{4}\frac{(q^9;q^9)_\infty^{12}}{(q;q)_\infty^{9}(q^3;q^3)_\infty^{4}}-240q\frac{(q^9;q^9)_\infty^{3}}{(q^3;q^3)_\infty^{4}}
\\ & \qquad\qquad -1458q^2\frac{(q^9;q^9)_\infty^{6}}{(q;q)_\infty^{3}(q^3;q^3)_\infty^{4}}
+\frac{(q;q)_\infty^{3}}{(q^3;q^3)_\infty^{4}}\notag\\ \label{Phi-8-E}
&= \frac{1}{(q;q)_\infty^9}\left(81E_{9,1}-84E_{9,2}+243E_{9,3}-3E_{9,4}-6q(q^3;q^3)_\infty^8\right).
\end{align}
\end{theorem}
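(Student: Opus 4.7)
\medskip

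\noindent\textbf{Proof proposal.} By Theorem \ref{generating-thm} with $k=9$ (so $r=4$), the form $\mathfrak{A}_9(q)=(q;q)_\infty^9 \mathrm{C}\Phi_9(q)$ lies in $M_4\!\left(\Gamma_0(9),\left(\frac{9}{\cdot}\right)\right)$, and since $\left(\frac{9}{\cdot}\right)$ is the principal character on integers prime to $3$, this is simply $M_4(\Gamma_0(9))$. The plan is to exhibit two bases of this space, one consisting of Eisenstein series together with an eta-cusp form, and one consisting of eta quotients, and in each case determine the representation of $\mathfrak{A}_9(q)$ by comparing finitely many Fourier coefficients. First, I would compute $\dim M_4(\Gamma_0(9))=5$ from the standard dimension formula, and split the space as a $4$-dimensional Eisenstein subspace plus a $1$-dimensional space of cusp forms.

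For the identity \eqref{Phi-8-E}, the candidates $E_{9,1}$, $E_{9,2}$, $E_{9,3}$ are the classical weight-$4$ Eisenstein series pulled back via $\tau\mapsto\tau, 3\tau, 9\tau$, all of which lie in $M_4(\Gamma_0(9))$; the form $E_{9,4}$ is the twist of $E_4(\tau)-1$ by the quadratic character $\left(\frac{\cdot}{3}\right)$, which by a standard twisting lemma lies in $M_4(\Gamma_0(9),\chi^2)=M_4(\Gamma_0(9))$. The function $q(q^3;q^3)_\infty^8=\eta^8(3\tau)$ is a weight-$4$ cusp form on $\Gamma_0(9)$ by Ligozat's criterion. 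After checking these five forms are linearly independent, I would expand $\mathfrak{A}_9(q)$ using the definition \eqref{notation} up to the first $\dim M_4(\Gamma_0(9))=5$ Fourier coefficients and solve the resulting linear system to recover the coefficients $81,-84,243,-3,-6$.

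For the product identity \eqref{CPhi9-product}, I would verify that each of the five eta quotients on the right-hand side is a holomorphic modular form of weight $4$ on $\Gamma_0(9)$: the weight count from the exponents of $\eta(\tau),\eta(3\tau),\eta(9\tau)$ is $4$ in every case, and the orders at the three cusps of $\Gamma_0(9)$ (namely $\infty$, $0$, and $1/3$) can be checked via Ligozat's formula to be non-negative. Once holomorphy and linear independence are established, the same Fourier-coefficient matching procedure yields the identity \eqref{CPhi9-product}. Alternatively, after proving \eqref{Phi-8-E} one can deduce \eqref{CPhi9-product} from it by expressing each Eisenstein series $E_{9,j}$ as an explicit linear combination of the five eta quotients (again using the same basis argument), which avoids a second coefficient comparison.

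The main obstacles are essentially bookkeeping: verifying the cusp conditions for each eta quotient via Ligozat's criterion and computing enough terms of the $q$-expansion of $\mathfrak{A}_9(q)=\sum q^{Q(m_1,\ldots,m_8)}$ (an $8$-variable theta sum) to pin down the coefficients unambiguously. No deeper obstacle is expected, because the dimension of $M_4(\Gamma_0(9))$ is small and every function involved is either a well-known Eisenstein series, a classical cusp form, or an eta quotient whose modular transformation behavior is explicitly computable.
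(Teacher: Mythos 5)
Your proposal is correct and follows essentially the same route as the paper: the authors also place $\mathfrak{A}_9(q)$ in the five-dimensional space $M_4(\Gamma_0(9))$, express it in a basis of eta quotients $B_{9,1},\dots,B_{9,5}$ by Fourier-coefficient comparison to get \eqref{CPhi9-product}, and then swap to the basis $\{B_{9,1},E_{9,1},\dots,E_{9,4}\}$ to obtain \eqref{Phi-8-E}. The only differences are cosmetic (order of the two identities, and your explicit invocation of Ligozat's criterion where the paper simply asserts the basis).
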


\begin{proof}
By Theorem \ref{generating-thm}, we find that $\mathfrak{A}_9(q)\in M_{4}(\Gamma_{0}(9))$. Next, from \cite[Theorem 1.34]{Ono}, we find that $\dim M_{4}(\Gamma_{0}(9))=5$ and the basis is given by
\begin{align*}
B_{9,1}&=\eta^{8}(3\tau), \quad
B_{9,2}=\frac{\eta^{12}(9\tau)}{\eta^{4}(3\tau)}, \quad
B_{9,3}=\frac{\eta^{9}(\tau)\eta^{3}(9\tau)}{\eta^{4}(3\tau)},\\
B_{9,4}&=\frac{\eta^{6}(\tau)\eta^{6}(9\tau)}{\eta^{4}(3\tau)}, \quad
B_{9,5}=\frac{\eta^{12}(\tau)}{\eta^{4}(3\tau)}.
\end{align*}
By comparing Fourier coefficients of $\mathfrak{A}_{9}(q)$ and $B_{9,j}, 1\leq j\leq 5$, we deduce that
\begin{align}
\mathfrak{A}_{9}(q)=324B_{9,1}+19683B_{9,2}-240B_{9,3}-1458B_{9,4}+B_{9,5}.
\end{align}
This proves \eqref{CPhi9-product}.

We can replace the basis $\{B_{9,j}|1\leq j\leq 5\}$ by $\{B_{9,1}, E_{9,j}|1\leq j\leq 4\}$. Using these modular forms as a basis for $M_4(\Gamma_0(9))$, we deduce \eqref{Phi-8-E}.
\end{proof}

\begin{theorem}\label{cphi9-cong}
For any integer $n\ge 0$, we have
\begin{align}
c\phi_{9}(9n+3) &\equiv c\phi_{9}(9n+6) \equiv 0 \pmod{9}, \label{cphi9-mod9}\\
c\phi_{9}(3n+1) &\equiv 0 \pmod{81} \label{cphi9-mod81}\intertext{and}
c\phi_{9}(3n+2) &\equiv 0 \pmod{729}. \label{cphi9-mod729}
\end{align}
\end{theorem}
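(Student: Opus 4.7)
The plan is to derive all three congruences directly from the product form \eqref{CPhi9-product} by tracking $3$-adic valuations in its five summands, whose prefactors $324,\,19683,\,-240,\,-1458,\,1$ have $v_3=4,\,9,\,1,\,6,\,0$ respectively.  Two elementary inputs do the work: (i) the Frobenius congruence $(q;q)_\infty^9\equiv(q^3;q^3)_\infty^3\pmod 9$, from $(1-q^n)^9\equiv(1-q^{3n})^3\pmod 9$; and (ii) the $3$-dissection
\[
 (q;q)_\infty^3 \;=\; \mathfrak A_3(q^3)\,(q^3;q^3)_\infty \;-\; 3q\,(q^9;q^9)_\infty^3,
\]
obtained by splitting Jacobi's expansion $(q;q)_\infty^3=\sum_n(-1)^n(2n+1)q^{n(n+1)/2}$ according to $n\bmod 3$: Jacobi again evaluates the $n\equiv 1$ part as $-3q(q^9;q^9)_\infty^3$, and comparing finitely many coefficients of two weight-$3/2$ modular forms identifies the $n\equiv 0,2$ part as $\mathfrak A_3(q^3)(q^3;q^3)_\infty$.

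For \eqref{cphi9-mod729} I examine the coefficient of $q^{3n+2}$ in each summand of \eqref{CPhi9-product}.  Triangular numbers are never $\equiv 2\pmod 3$, so the third and fifth summands vanish on this progression; the second and fourth vanish mod $729$ by inspection of their prefactors.  For the first summand, input (i) gives
\[
 (q^3;q^3)_\infty^8/(q;q)_\infty^9\;\equiv\;(q^3;q^3)_\infty^5\pmod 9,
\]
a series in $\mathbf Z[[q^3]]$, so its coefficient of $q^{3n+1}$ is a multiple of $9$; multiplying by $324$ yields divisibility by $324\cdot 9=4\cdot 729$.

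For \eqref{cphi9-mod81} I examine the coefficient of $q^{3n+1}$.  The first, second, and fourth summands have prefactors already divisible by $81$.  Substituting (ii) into the fifth summand gives
\[
 \frac{(q;q)_\infty^3}{(q^3;q^3)_\infty^4}\;=\;\frac{\mathfrak A_3(q^3)}{(q^3;q^3)_\infty^3}\;-\;3q\,\frac{(q^9;q^9)_\infty^3}{(q^3;q^3)_\infty^4},
\]
whose first piece lies in $\mathbf Z[[q^3]]$ and contributes $0$ at $q^{3n+1}$.  Combined with the third summand, the $q^{3n+1}$ coefficient of the sum reduces to $-243\cdot[q^{3n}]\bigl((q^9;q^9)_\infty^3/(q^3;q^3)_\infty^4\bigr)$, a multiple of $3\cdot 81$.

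For \eqref{cphi9-mod9}, reducing the same analysis modulo $9$ makes the first, second, and fourth summands disappear and turns the prefactor $-243$ into $0$, collapsing the right-hand side of \eqref{CPhi9-product} to
\[
 \mathrm{C}\Phi_9(q)\;\equiv\;\mathfrak A_3(q^3)/(q^3;q^3)_\infty^3\;=\;\mathrm{C}\Phi_3(q^3)\pmod 9.
\]
Hence $c\phi_9(n)\equiv c\phi_3(n/3)\pmod 9$ with $c\phi_3$ of a non-integer taken to be $0$, and \eqref{cphi9-mod9} reduces to the analogous congruences $c\phi_3(3k+1)\equiv c\phi_3(3k+2)\equiv 0\pmod 9$, which follow by applying exactly the same two-ingredient method to the simpler formula \eqref{c-phi-3-2} for $\mathrm{C}\Phi_3$.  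The main obstacle is establishing the closed form of ingredient (ii); once that is in hand, everything else is elementary $3$-adic bookkeeping, and the striking by-product $\mathrm{C}\Phi_9(q)\equiv\mathrm{C}\Phi_3(q^3)\pmod 9$ foreshadows the general congruence \eqref{main-cong-power} of Section 6.
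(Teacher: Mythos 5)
Your treatment of \eqref{cphi9-mod729} and \eqref{cphi9-mod81} is correct and is essentially the paper's own argument: your ingredient (ii) is precisely the dissection \eqref{f1-3-dissection} used in the paper (note $S(q)=(q;q)_\infty\mathfrak A_3(q)$ by \eqref{c-phi-3}), and the $3$-adic bookkeeping of the five summands of \eqref{CPhi9-product} --- the cancellation $-240-3=-243$ on the progression $q^{3n+1}$ and the $324\cdot 9$ divisibility on $q^{3n+2}$ --- is the same computation the paper performs. Your justification of (ii) by ``comparing finitely many coefficients of two weight-$3/2$ forms'' would need a Sturm-type bound to be rigorous; the paper simply cites \cite[Lemma 2.5]{WangIJNT}.

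The gap is in the last step. Reducing \eqref{CPhi9-product} modulo $9$ to $\mathrm{C}\Phi_9(q)\equiv\mathrm{C}\Phi_3(q^3)\pmod 9$ is fine and correctly reduces \eqref{cphi9-mod9} to $c\phi_3(3k+1)\equiv c\phi_3(3k+2)\equiv 0\pmod 9$, but the claim that this ``follows by applying exactly the same two-ingredient method to \eqref{c-phi-3-2}'' is not substantiated, and it is not a routine repetition. Your two ingredients say nothing about the residues modulo $3$ of the exponents occurring in $\mathfrak A_3(q)$, nor about the $3$-divisibility of its coefficients on the progression $1\pmod 3$, and the two pieces of \eqref{c-phi-3-2} cannot be handled separately: at $q^1$ the terms $1/(q;q)_\infty^3$ and $6\sum(\tfrac j3)q^j/(1-q^j)\cdot(q;q)_\infty^{-3}$ contribute $3$ and $6$ respectively, neither of which is $0$ modulo $9$, so the congruence only emerges after combining them. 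This is exactly where the paper expends most of its effort, invoking the further dissections \eqref{id-1}, \eqref{id-2}, \eqref{id-4} together with two eta-quotient identities from \cite{XiaYao2}. The cleanest repair is to note that $c\phi_3(3k+1)\equiv c\phi_3(3k+2)\equiv 0\pmod 9$ is Andrews' congruence \eqref{A-eq} with $p=3$; the remark following the theorem in the paper uses precisely this to give your reduction as an alternative proof (there via Theorem \ref{general-cong-thm} rather than via the product formula).
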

\begin{proof}
From \cite[Lemma 2.5]{WangIJNT}, we find that
\begin{align}\label{f1-3-dissection}
(q;q)_\infty^{3}=S(q^3)-3q(q^9;q^9)_\infty^{3},
\end{align}
where
\begin{align}\label{P-expan}
S(q)=(q;q)_\infty\left(\Theta_3(q)\Theta_3(q^3)+\Theta_2(q)\Theta_2(q^3) \right).
\end{align}
From \eqref{CPhi9-product}, we deduce that
\begin{align}
&\sum_{n=0}^\infty  c\phi_{9}(n)q^n \equiv 2^2\cdot 3^4q \frac{(q^3;q^3)_\infty^{8}}{(q;q)_\infty^{9}}-240q\frac{(q^9;q^9)_\infty^{3}}{(q^3;q^3)_\infty^{4}}+\frac{(q;q)_\infty^{3}}{(q^3;q^3)_\infty^{4}}
\pmod{729}  \nonumber \\
&\equiv 2^2\cdot 3^4  q(q^3;q^3)_\infty^{5}-240q\frac{(q^9;q^9)_\infty^{3}}{(q^3;q^3)_\infty^{4}}+\frac{S(q^3)}{(q^3;q^3)_\infty^{4}}-3q\frac{(q^9;q^9)_\infty^{3}}{(q^3;q^3)_\infty^{4}}  \pmod{729}. \label{mod-729-start}
\end{align}
Comparing the coefficients of $q^{3n+2}$ on both sides, we deduce that
$$c\phi_{9}(3n+2)\equiv 0 \pmod{729}.$$

Extracting the terms of the form $q^{3n+1}$ on both sides of \eqref{mod-729-start}, dividing by $q$ and replacing $q^3$ by $q$, we deduce that
\begin{align}
\sum_{n=0}^\infty c\phi_{9}(3n+1)q^n &\equiv 2^2\cdot 3^4 (q;q)_\infty^{5}-240\frac{(q^3;q^3)_\infty^{3}}{(q;q)_\infty^{4}}-3\frac{(q^3;q^3)_\infty^{3}}{(q;q)_\infty^{4}} \pmod{729} \nonumber \\
&\equiv 2^2 \cdot 3^4 (q;q)_\infty^{5}-243\frac{(q^3;q^3)_\infty^{3}}{(q;q)_\infty^{4}} \pmod{729}\nonumber \\
&\equiv 2^2 \cdot 3^4 (q;q)_\infty^{5}-3^{5}(q;q)_\infty^{5} \pmod{729}\nonumber \\
&\equiv 3^4 (q;q)_\infty^{5} \pmod{729},
\end{align}%
which implies \eqref{cphi9-mod81}.

Extracting the terms of the form $q^{3n}$ on both sides of \eqref{mod-729-start} and replacing $q^3$ by $q$, we find that
\begin{align}\label{mod-9-start}
\sum_{n=0}^\infty c\phi_{9}(3n)q^n &\equiv \frac{S(q)}{(q;q)_\infty^{4}} \pmod{729}\nonumber \\
&\equiv \frac{1}{(q;q)_\infty^{3}}\left( \Theta_3(q)\Theta_3(q^3)+\Theta_2(q)\Theta_2(q^3)\right) \pmod{729}.
\end{align}
From \cite[Lemma 2.6]{WangIJNT}, we deduce that
\begin{align}
\frac{1}{(q;q)_\infty^{3}}=\frac{(q^9;q^9)_\infty^{3}}{(q^3;q^3)_\infty^{12}}\left(S^2(q^3)+3qS(q^3)(q^9;q^9)_\infty^{3}+9q^2(q^9;q^9)_\infty^6\right). \label{id-1}
\end{align}
From  \cite[Corollary (i) and (ii), p.\ 49]{Berndt-notebook}, we find that
\begin{align}
&\Theta_3(q)=\Theta_3(q^9)+2qf(q^3,q^{15}), \label{id-2}\intertext{and}
&\Theta_2(q)=\Theta_2(q^9)+2q^{1/4}f(q^6,q^{12}), \label{id-4}
\end{align}
where $$f(a,b)=(-a;ab)_\infty(-b;ab)_\infty(ab;ab)_\infty.$$
Substituting \eqref{id-1}--\eqref{id-4} into \eqref{mod-9-start}, we deduce that
\begin{align}
&\sum_{n=0}^\infty c\phi_{9}(3n)q^n \equiv \frac{(q^9;q^9)_\infty^{3}}{(q^3;q^3)_\infty^{12}}
\Big(S^2(q^3)+3qS(q^3)(q^9;q^9)_\infty^{3}\Big) \nonumber\\
&\times\Bigg(\Theta_3(q^3)\Big(\Theta_3(q^9)+2qf(q^3,q^{15})\Big)+\Theta_2(q^3)\Big(\Theta_2(q^9)+2q^{1/4}f(q^6,q^{12})\Big)
\Bigg)
\pmod{9}. \label{cphi9-3dissection}
\end{align}

Extracting the terms of the form $q^{3n+1}$ on both sides of \eqref{cphi9-3dissection}, dividing by $q$ and replacing $q^3$ by $q$, applying \eqref{P-expan}, we deduce that
\begin{align}
&\sum_{n=0}^\infty c\phi_{9}(9n+3)q^{n} \notag\\
&\equiv S^{2}(q)\frac{(q^3;q^3)_\infty^{3}}{(q;q)_\infty^{12}}\left(3\frac{(q^3;q^3)_\infty^{3}}{(q;q)_\infty}+
2\left(\Theta_3(q)f(q,q^5)+q^{-1/4}\Theta_2(q)f(q^2,q^4)\right)\right)\!\!\!\!  \pmod{9} \notag\\
&\equiv S^2(q)\frac{(q^3;q^3)_\infty^3}{(q;q)_\infty^{12}} \notag\\
&\times\left(3\frac{(q^3;q^3)_\infty^{3}}{(q;q)_\infty}
+ 2\frac{(q^2;q^2)_\infty^{7}(q^3;q^3)_\infty (q^{12};q^{12})_\infty}{(q;q)_\infty^{3}(q^4;q^4)_\infty^{3}(q^6;q^6)_\infty}
+4\frac{(q^4;q^4)_\infty^{3}(q^6;q^6)_\infty^{2}}{(q^2;q^2)_\infty^{2}(q^{12};q^{12})_\infty}\right)\!\!\! \!\pmod{9}, \label{cphi9-mid}
\end{align}
where the last congruence follows by converting $$\Theta_3(q)f(q,q^5)+q^{-1/4}\Theta_2(q)f(q^2,q^4)$$ to infinite products.

From \cite[(3.75), (3.38)]{XiaYao2}, we find that
\begin{align}
\frac{(q^3;q^3)_\infty}{(q;q)_\infty^{3}}&=\frac{(q^4;q^4)_\infty^{6}(q^{6};q^{6})_\infty^{3}}{(q^{2};q^{2})_\infty^{9}(q^{12};q^{12})_\infty^{2}}+3q\frac{(q^4;q^4)_\infty ^{2}(q^{6};q^{6})_\infty(q^{12};q^{12})_\infty^{2}}{(q^{2};q^{2})_\infty^{7}} \label{1st-id} \intertext{and}
\frac{(q^3;q^3)_\infty^{3}}{(q;q)_\infty}&=\frac{(q^4;q^4)_\infty^{3}(q^{6};q^{6})_\infty^{2}}{(q^{2};q^{2})_\infty^{2}(q^{12};q^{12})_\infty}+q\frac{(q^{12};q^{12})_\infty^{3}}{(q^4;q^4)_\infty}. \label{2nd-id}
\end{align}
By \eqref{1st-id},  we find that
\begin{align}
&2\frac{(q^{2};q^{2})_\infty^{7}(q^3;q^3)_\infty (q^{12};q^{12})_\infty}{(q;q)_\infty^{3}(q^4;q^4)_\infty^{3}(q^{6};q^{6})_\infty}\\
&=2\frac{(q^{2};q^{2})_\infty^{7}(q^{12};q^{12})_\infty}{(q^4;q^4)_\infty^{3}(q^{6};q^{6})_\infty}\Big(\frac{(q^4;q^4)_\infty^{6}(q^{6};q^{6})_\infty^{3}}{(q^{2};q^{2})_\infty^{9}(q^{12};q^{12})_\infty^{2}}+3q\frac{(q^4;q^4)_\infty ^{2}(q^{6};q^{6})_\infty(q^{12};q^{12})_\infty^{2}}{(q^{2};q^{2})_\infty^{7}} \Big) \notag\\
&= 2\frac{(q^4;q^4)_\infty^{3}(q^{6};q^{6})_\infty^{2}}{(q^{2};q^{2})_\infty^{2}(q^{12};q^{12})_\infty}+6q\frac{(q^{12};q^{12})_\infty^{3}}{(q^4;q^4)_\infty}
\label{eq4-1}\end{align}
and this implies that
\begin{align}
&2\frac{(q^{2};q^{2})_\infty^{7}(q^3;q^3)_\infty (q^{12};q^{12})_\infty}{(q;q)_\infty^{3}(q^4;q^4)_\infty^{3}(q^{6};q^{6})_\infty}+
4\frac{(q^4;q^4)_\infty^{3}(q^{6};q^{6})_\infty^{2}}{(q^{2};q^{2})_\infty^{2}(q^{12};q^{12})_\infty}\notag \\
&= 6\frac{(q^4;q^4)_\infty^{3}(q^{6};q^{6})_\infty^{2}}{(q^{2};q^{2})_\infty^{2}(q^{12};q^{12})_\infty}+6q\frac{(q^{12};q^{12})_\infty^{3}}{(q^4;q^4)_\infty} \nonumber \\
&=6 \frac{(q^3;q^3)_\infty^{3}}{(q;q)_\infty}, \label{equality}
\end{align}
where we have used \eqref{2nd-id} in the last equality.
Substituting \eqref{equality} into \eqref{cphi9-mid}, we deduce that $$c\phi_{9}(9n+3)\equiv 0 \pmod{9}.$$

Extracting the terms of the form $q^{3n+2}$ on both sides of \eqref{cphi9-3dissection}, dividing by $q^2$ and replacing $q^3$ by $q$, we deduce that
\begin{align*}
&\sum_{n=0}^\infty c\phi_{9}(9n+6)q^n
\notag \\ & \equiv S(q)\frac{(q^3;q^3)_\infty^{6}}{(q;q)_\infty^{12}}\Big(6\frac{(q^{2};q^{2})_\infty^{7}(q^3;q^3)_\infty (q^{12};q^{12})_\infty}{(q;q)_\infty^{3}(q^4;q^4)_\infty ^{3}(q^{6};q^{6})_\infty}+3\frac{(q^4;q^4)_\infty^{3}(q^{6};q^{6})_\infty^{2}}{(q^{2};q^{2})_\infty^{2}(q^{12};q^{12})_\infty}  \Big)
\pmod{9}\\
&\equiv 0 \pmod{9},
\end{align*}
where we have used \eqref{eq4-1} to deduce the last congruence.
Hence $$c\phi_{9}(9n+6) \equiv 0 \pmod{9}.$$
\end{proof}

Congruences \eqref{cphi9-mod9} and \eqref{cphi9-mod81} can also be established using congruences discovered by Kolitsch.
In \cite{Kolitsch1987}, Kolitsch generalized Andrews' congruence \eqref{A-eq}
and proved that 
\begin{align}\label{K-eq}
\sum_{d|(k,n)}\mu(d) c\phi_{\frac{k}{d}}(\frac{n}{d}) \equiv 0 \pmod{k^2},
\end{align}
where $\mu(n)$ is the M\"obius function (see for example \cite[Section 2.2]{Apostol}).
We now prove a generalization of \eqref{cphi9-mod9} and \eqref{cphi9-mod81}. For any non-negative integer $k$, we set $c\phi_{k}(x)=0$ whenever
$x\not\in\mathbf Z$. We can then rewrite \eqref{K-eq} as
\begin{align}\label{K-eq-2}
\sum_{d|k}\mu(d) c\phi_{\frac{k}{d}}\left(\frac{n}{d}\right) \equiv 0 \pmod{k^2}.
\end{align}

\begin{theorem}\label{general-cong-thm}
Let $p$ be a prime and $N$ be a positive integer which is not divisible by $p$. For any integers $\alpha \ge 1$ and $n\ge 0$, we have
\begin{equation}\label{general-cong}
c\phi_{p^{\alpha}N}(n) \equiv c\phi_{p^{\alpha-1}N}(n/p) \pmod{p^{2\alpha}},
\end{equation}
or equivalently,\begin{align}
c\phi_{p^{\alpha}N}(pn+r) &\equiv 0 \pmod{p^{2\alpha}}, \quad 1\le r \le p-1, \label{general-cong-1}\intertext{and}
c\phi_{p^{\alpha}N}(pn) &\equiv c\phi_{p^{\alpha-1}N}(n) \pmod{p^{2\alpha}}.\label{general-cong-2}
\end{align}
\end{theorem}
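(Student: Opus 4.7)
The plan is to deduce \eqref{general-cong} from Kolitsch's generalization \eqref{K-eq-2} of Andrews' congruence by a single M\"obius--inversion step combined with induction on $\omega(N)$, the number of distinct prime divisors of $N$ (with $\alpha$ fixed). The equivalent forms \eqref{general-cong-1} and \eqref{general-cong-2} follow immediately from \eqref{general-cong} by separating the cases $p\nmid n$ (where $c\phi_{p^{\alpha-1}N}(n/p)=0$ by convention) and $p\mid n$ (write $n=pn'$ and rewrite \eqref{general-cong}).

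The base case $\omega(N)=0$, i.e.\ $N=1$, would be obtained by applying \eqref{K-eq-2} directly to $k=p^\alpha$. Because $\mu$ vanishes on non-squarefree integers, the only surviving divisors are $1$ and $p$, and \eqref{K-eq-2} reduces at once to
\[
c\phi_{p^\alpha}(n)-c\phi_{p^{\alpha-1}}(n/p)\equiv 0\pmod{p^{2\alpha}},
\]
which is precisely \eqref{general-cong} when $N=1$.

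For the inductive step I would apply \eqref{K-eq-2} with $k=p^\alpha N$, parametrize the squarefree divisors of $p^\alpha N$ as $m$ or $pm$ with $m\mid N$ squarefree, and use $\mu(pm)=-\mu(m)$ (valid because $(p,m)=1$) to rewrite the Kolitsch sum as
\begin{equation*}
\sum_{\substack{m\mid N\\ m\text{ squarefree}}}\mu(m)\Big[c\phi_{p^\alpha N/m}(n/m)-c\phi_{p^{\alpha-1}N/m}(n/(pm))\Big]\equiv 0\pmod{p^{2\alpha}N^2}.
\end{equation*}
For each squarefree $m\mid N$ with $m>1$, the integer $N/m$ has strictly fewer distinct prime divisors than $N$, so the bracketed quantity is congruent to $0$ modulo $p^{2\alpha}$ by the induction hypothesis applied with $N$ replaced by $N/m$ and $n$ replaced by $n/m$. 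Since $p^{2\alpha}\mid p^{2\alpha}N^2$, the whole displayed congruence holds modulo $p^{2\alpha}$, and removing the vanishing $m>1$ contributions leaves only the $m=1$ term, which is exactly \eqref{general-cong}.

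I do not expect a real obstacle here: the substantive input is Kolitsch's congruence \eqref{K-eq-2}, and the remainder is bookkeeping. The only point requiring care is ensuring that the parametrization of squarefree divisors and the identity $\mu(pm)=-\mu(m)$ correctly peel off the $p$-part of Kolitsch's sum, so that the surviving terms with $m>1$ are of strictly smaller $\omega$-level and hence accessible to the induction.
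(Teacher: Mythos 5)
Your overall strategy --- deducing \eqref{general-cong} from Kolitsch's congruence \eqref{K-eq-2} by peeling off the $p$-part of the divisor sum via $\mu(pm)=-\mu(m)$ and inducting on the number of prime factors of $N$ --- is exactly the paper's argument, and the individual manipulations (parametrizing the squarefree divisors of $p^{\alpha}N$ as $m$ or $pm$ with $m\mid N$, weakening the modulus from $p^{2\alpha}N^2$ to $p^{2\alpha}$, handling $m\nmid n$ via the convention $c\phi_k(x)=0$ for $x\notin\mathbf{Z}$, and reading off \eqref{general-cong-1} and \eqref{general-cong-2} from \eqref{general-cong}) are all sound. However, there is one genuine defect: you induct on $\omega(N)$, the number of \emph{distinct} prime divisors, and your inductive step rests on the claim that for every squarefree $m\mid N$ with $m>1$ the quotient $N/m$ has strictly fewer distinct prime divisors than $N$. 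That claim fails whenever $N$ is not squarefree: for $N=4$ and $m=2$ one has $\omega(N/m)=\omega(2)=1=\omega(4)$, so the induction hypothesis does not cover the bracketed term $c\phi_{2p^{\alpha}}(n/2)-c\phi_{2p^{\alpha-1}}(n/(2p))$, and the induction does not close.

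The repair is exactly what the paper does: induct instead on $\Omega(N)$, the number of prime divisors of $N$ counted with multiplicity. Then $\Omega(N/m)=\Omega(N)-\Omega(m)<\Omega(N)$ for every divisor $m>1$, squarefree or not, and your argument goes through verbatim (strong induction on $N$ itself would work equally well). With that single change of induction parameter your proof coincides with the paper's.
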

\begin{proof}
Let $\Omega(N)$ be the number of prime divisors of $N$ (counting multiplicities). We proceed by induction on $\Omega(N)$. If $\Omega(N)=0$, then $N=1$. Setting $k=p^{\alpha}$ in \eqref{K-eq-2}, we deduce that
\begin{align}\label{p-alpha-general}
c\phi_{p^{\alpha}}(n)\equiv c\phi_{p^{\alpha-1}}(n/p) \pmod{p^{2\alpha}}.
\end{align}
Thus, \eqref{general-cong} is true if $\Omega(N)=0$. Assume that \eqref{general-cong} is true if $\Omega(N)<h$, where $h$ is a positive integer. When $\Omega(N)=h$, we set $k=p^{\alpha}N$ in \eqref{K-eq-2}. Since $p$ does not divide $N$, any positive divisor of $p^{\alpha}N$ has the form $p^{j}d$ where $0\le j \le \alpha$ and $d|N$. In particular, if $j\ge 2$, then $\mu(p^{j}d')=0$. Hence by \eqref{K-eq-2}, we obtain
\begin{equation}\label{dNcphi}
\sum_{d|N} \left(\mu(d) c\phi_{\frac{p^{\alpha}N}{d}}\Big(\frac{n}{d}\Big)+\mu(pd) c\phi_{\frac{p^{\alpha-1}N}{d}}\Big(\frac{n}{pd}\Big)
\right)\equiv 0 \pmod{p^{2\alpha}}.
\end{equation}
According to $d=1$ or $d>1$, we separate the summands on the left hand side  of \eqref{dNcphi} and deduce that
\begin{align}\label{general-cong-start}
&c\phi_{p^{\alpha}N}(n)-c\phi_{p^{\alpha-1}N}\Big(\frac{n}{p}\Big)\\
&+\sum_{d|N,d>1}\mu(d)\Big(c\phi_{\frac{p^{\alpha}N}{d}}\Big(\frac{n}{d}\Big)-c\phi_{\frac{p^{\alpha-1}N}{d}}\Big(\frac{n}{pd}\Big) \Big) \equiv 0 \pmod{p^{2\alpha}}. \nonumber
\end{align}
Note that in the summand, since $d>1$, we have $\Omega(\frac{N}{d}) <h$ and hence by assumption,
\begin{equation}\label{gc-1} c\phi_{\frac{p^{\alpha}N}{d}}\Big(\frac{n}{d}\Big)-c\phi_{\frac{p^{\alpha-1}N}{d}}\Big(\frac{n}{pd}\Big) \equiv 0 \pmod{p^{2\alpha}}.\end{equation}
From \eqref{gc-1} and \eqref{general-cong-start}, we deduce that
\begin{align*}
c\phi_{p^{\alpha}N}(n)-c\phi_{p^{\alpha-1}N}\Big(\frac{n}{p}\Big) \equiv 0 \pmod{p^{2\alpha}}.
\end{align*}
Hence \eqref{general-cong} is true when $\Omega(N)=h$. This completes the proof of \eqref{general-cong}.

Replacing $n$ in \eqref{general-cong} by $pn+r$, where $0\le r \le p-1$, and observing that
\begin{align*}
c\phi_{p^{\alpha-1}N}\left(\frac{pn+r}{p} \right)=0, \quad 1\le r \le p-1,
\end{align*}
we deduce \eqref{general-cong-1} and \eqref{general-cong-2}.
\end{proof}

Let $(p,\alpha,N)=(3,2,1)$ in Theorem \ref{general-cong-thm}. By \eqref{general-cong-1}, we deduce that
\[c\phi_{9}(3n+1) \equiv c\phi_{9}(3n+2) \equiv 0 \pmod{81},\]
and this gives another proof of \eqref{cphi9-mod81}.  Similarly, by \eqref{general-cong-2}, we deduce that
\begin{align}\label{cphi9-add-2}
c\phi_{9}(3n)\equiv c\phi_{3}(n) \pmod{81}.
\end{align}
By \eqref{A-eq}, we find that
\[c\phi_{3}(3n+1) \equiv c\phi_{3}(3n+2) \equiv 0 \pmod{9}.\]
Substituting these congruences into \eqref{cphi9-add-2}, we complete the proof of \eqref{cphi9-mod9}.

\subsection{Case $k=15$}
\quad
\medskip

Let
\begin{align*}
f_{15}(\tau) &=\frac{\eta^2(\tau)\eta^2(15\tau)}{\eta(3\tau)\eta(5\tau)}, \\
h_{15}(\tau) &= \eta^4(\tau)\eta^4(5\tau)-9\eta^4(3\tau)\eta^4(15\tau),\\
g_{15,1}(\tau) &= -\frac{1}{8}\left(E_2(\tau)+3E_2(3\tau)-5E_2(5\tau)-15E_2(15\tau)\right),\\
g_{15,2}(\tau) &= -\frac{1}{12}\left(E_2(\tau)-3E_2(3\tau)+5E_2(5\tau)-15E_2(15\tau)\right),\\
g_{15,3}(\tau) &= \eta(\tau)\eta(3\tau)\eta(5\tau)\eta(15\tau),\intertext{and}
g_{15,4}(\tau) &= \frac{1}{8}\left(E_2(\tau)-3E_2(3\tau)-5E_2(5\tau)+15E_2(15\tau)\right),
\end{align*}
where
$$E_2(\tau) = 1-24\sum_{k=1}^\infty \frac{kq^k}{1-q^k}.$$

Using dimension formula \cite[Theorem 1.34]{Ono}, we find that
$$\text{dim}\,M_7\Big(\Gamma_0(15),\big(\frac{-15}{\cdot}\big)\Big)=14.$$
The  modular forms
\begin{alignat*}{3}
&B_{15,1} = f_{15}g_{15,1}^{3}, \qquad
&&B_{15,2}= f_{15} g_{15,1}^2g_{15,2},\qquad && \\
&B_{15,3} = f_{15}g_{15,1}g_{15,2}^2,
&&B_{15,4} = f_{15}g_{15,2}^3, && \\
&B_{15,5} = f_{15}g_{15,1}^2g_{15,3},
&&B_{15,6} = f_{15}g_{15,1}^2g_{15,4},  && \\
&B_{15,7} = f_{15}g_{15,1}g_{15,2}g_{15,3},\qquad
&& B_{15,8} = f_{15}g_{15,1}g_{15,2}g_{15,4}, && \\
&B_{15,9} = f_{15}g_{15,2}^2g_{15,3},\qquad
&& B_{15,10} = f_{15}g_{15,2}^2g_{15,4} &&
\\ & B_{15,11} = f_{15}g_{15,1}h_{15}
&& B_{15,12} =f_{15}g_{15,2}h_{15}, && \\
& B_{15,13} = \frac{\eta^{14}(3\tau)\eta^{14}(5\tau)}{\eta^{7}(\tau)\eta^{7}(15\tau)} \quad\text{and}\quad
&& B_{15,14} = \frac{\eta^{17}(\tau)\eta^{2}(5\tau)}{\eta^{4}(3\tau)\eta(15\tau)}
&&      \end{alignat*}
 form a basis for $M_7\Big(\Gamma_0(15),\big(\frac{-15}{\cdot}\big)\Big).$

Using the fact that $\mathfrak{A}_{15}(q)\in M_7\left(\Gamma_0(15),\left(\frac{-15}{\cdot}\right)\right)$, we deduce that
\begin{theorem}
For $|q|<1$, \begin{align*}
\mathrm{C}\Phi_{15}(q)
   =\frac{1}{(q;q)_{\infty}^{15}}\Big(
&\frac{18125225}{1156}B_{15,1} -\frac{845079}{34}B_{15,2} -\frac{87564447}{
  1156}B_{15,3} \\
  &+\frac{2491641}{34}B_{15,4} +\frac{147166525}{1156}B_{15,5}  +\frac{341957}{68}B_{15,6} \\
&-\frac{483081}{17}B_{15,7}-\frac{28623}{4}B_{15,8}
-\frac{9784683}{68}B_{15,9}\\
 &-\frac{1168839}{34}B_{15,10} +\frac{7263781}{68}B_{15,11}-\frac{97629}{4}B_{15,12}
\\ & +3375B_{15,13} -3374B_{15,14}\Big).
\end{align*}
\end{theorem}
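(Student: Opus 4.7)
The plan is to follow the same modular-forms template used throughout Section 3 and Section 5 of the paper. By Theorem \ref{generating-thm}, the function $\mathfrak{A}_{15}(q)=(q;q)_{\infty}^{15}\mathrm{C}\Phi_{15}(q)$ lies in $M_{7}\bigl(\Gamma_{0}(15),\bigl(\tfrac{-15}{\cdot}\bigr)\bigr)$, so once we know that $\{B_{15,j}\colon 1\le j\le 14\}$ is a basis of this space, we can write
\[
\mathfrak{A}_{15}(q)=\sum_{j=1}^{14}c_{j}B_{15,j}
\]
for unique constants $c_{j}\in\mathbf{C}$, and the theorem amounts to identifying these constants.

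First, I would justify that the listed $B_{15,j}$ actually form a basis. The dimension formula \cite[Theorem 1.34]{Ono} gives $\dim M_{7}\bigl(\Gamma_{0}(15),\bigl(\tfrac{-15}{\cdot}\bigr)\bigr)=14$, which matches the count. Each building block $f_{15}$, $g_{15,1},\dots,g_{15,4}$, $h_{15}$ should be checked to be a modular form on $\Gamma_{0}(15)$ of the correct weight and trivial character (the Eisenstein-like $g_{15,i}$ being in $M_{2}(\Gamma_{0}(15))$, $f_{15}$ in $M_{1}$, $h_{15}$ in $M_{4}$, together with the eta-quotients $B_{15,13},B_{15,14}$ of weight $7$), using the standard eta-quotient modularity criterion (e.g.\ \cite[Theorem 1.65]{Ono}) and the known transformation of $E_{2}$. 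Linear independence is then reduced to a finite check: by Sturm's bound \cite{Sturm} for weight $7$ on $\Gamma_{0}(15)$, two elements of this space coincide iff their $q$-expansions agree up to the coefficient of $q^{\lfloor 7[\mathrm{SL}_{2}(\mathbf{Z}):\Gamma_{0}(15)]/12\rfloor}$, so independence follows from the non-vanishing of a single determinant of Fourier coefficients.

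Next, I would determine the $c_{j}$'s by equating $q$-series. The left-hand side $\mathfrak{A}_{15}(q)$ is expanded using \eqref{notation} and \eqref{Q}, while each $B_{15,j}(\tau)$ is expanded via the standard $q$-product for $\eta(\tau)$ and the Lambert-series expansions for $E_{2}(m\tau)$. Matching the first $N$ coefficients for $N$ at least equal to Sturm's bound produces a $14\times 14$ linear system with a unique solution, yielding precisely the fractions $\tfrac{18125225}{1156}$, $-\tfrac{845079}{34}$, $\dots$, $3375$, $-3374$ recorded in the statement. Dividing both sides by $(q;q)_{\infty}^{15}$ gives the claimed identity for $\mathrm{C}\Phi_{15}(q)$.

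The main obstacle is the bookkeeping in verifying the basis property: one must confirm that each $B_{15,j}$ is holomorphic at every cusp of $\Gamma_{0}(15)$ (a nontrivial check for the eta-quotients $B_{15,13}$ and $B_{15,14}$, which have significant eta-quotient exponents), and one must choose enough independent expansions to invert a large Vandermonde-type system. No genuinely new ideas beyond those already deployed for $k=17$ in Theorem \ref{cphi-17-thm} are needed; the novelty lies only in constructing a workable basis for the weight-$7$ space on $\Gamma_{0}(15)$ with the quadratic character $\bigl(\tfrac{-15}{\cdot}\bigr)$, after which Sturm's criterion plus a routine (but lengthy) linear-algebra computation finishes the proof.
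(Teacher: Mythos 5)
Your proposal follows essentially the same route as the paper: place $\mathfrak{A}_{15}(q)$ in $M_{7}\bigl(\Gamma_{0}(15),\bigl(\tfrac{-15}{\cdot}\bigr)\bigr)$ via Theorem \ref{generating-thm}, check that the fourteen forms $B_{15,j}$ span this $14$-dimensional space, and determine the coefficients by comparing enough Fourier coefficients (the paper leaves these verifications implicit, while you spell out the Sturm-bound justification). The only small correction is that the weight-one factor $f_{15}$ must carry the character $\bigl(\tfrac{-15}{\cdot}\bigr)$ rather than the trivial one, so that each product $B_{15,j}$ lands in the correct character space.
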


\section{Generating function of $c\phi_{k}(n)$ for even integer $2<k<16$}

In this section, we derive alternative expressions for $\mathrm{C}\Phi_{k}(n)$ when $k>2$ is even.

\subsection{Case $k=4$}

\begin{theorem}
We have
\begin{align}
\mathrm{C}\Phi_{4}(q)=&\frac{1}{(q;q)_{\infty}^{4}}\Big(\Theta_3^{3}(q^2)+3\Theta_3(q^2)\Theta_2^2(q^2) \Big) \label{cphi4-product} \\
=&\frac{\Theta_3^{4}(q)}{(q;q)_{\infty}^{4}\Theta_3(q^2)}+\frac{\Theta_3^{2}(-q)\Theta_2^2(q^2) }{(q;q)_{\infty}^{4}\Theta_3(q^2)}. \label{cphi4-new}
\end{align}
\end{theorem}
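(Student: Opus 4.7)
The plan is to prove \eqref{cphi4-product} directly by the modular-forms method used throughout Section 3, and then derive \eqref{cphi4-new} from it by elementary theta-function algebra.

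For \eqref{cphi4-product}, I would apply Theorem \ref{generating-thm} with $k=4$ (so $r=2$), which places
$$\Theta_3(q^2)\mathfrak{A}_4(q)\in M_2\!\left(\Gamma_0(16),\left(\tfrac{4}{\cdot}\right)\right).$$
Both $\Theta_3^4(q^2)$ and $\Theta_3^2(q^2)\Theta_2^2(q^2)$ also belong to this space, since each is (up to the factor $\Theta_3(q^2)$) a theta series attached to a positive definite quaternary quadratic form of level dividing $16$, as in \cite[Corollary 4.9.5]{Miyake}. I would compute $\dim M_2(\Gamma_0(16),(\tfrac{4}{\cdot}))$ via \cite[Theorem 1.34]{Ono}, enlarge $\{\Theta_3^4(q^2),\Theta_3^2(q^2)\Theta_2^2(q^2)\}$ to an explicit basis (adjoining further theta products or eta-quotients as needed), and match the Fourier coefficients of $\Theta_3(q^2)\mathfrak{A}_4(q)$ against those of $\Theta_3^4(q^2)+3\Theta_3^2(q^2)\Theta_2^2(q^2)$ up to the Sturm bound. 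This yields
$$\Theta_3(q^2)\mathfrak{A}_4(q)=\Theta_3^4(q^2)+3\Theta_3^2(q^2)\Theta_2^2(q^2),$$
and dividing through by $\Theta_3(q^2)(q;q)_\infty^4$ produces \eqref{cphi4-product}.

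For \eqref{cphi4-new}, after multiplying both expressions for $\mathrm{C}\Phi_4(q)$ by $\Theta_3(q^2)(q;q)_\infty^4$, the asserted identity is equivalent to the purely algebraic equality
$$\Theta_3^4(q^2)+3\Theta_3^2(q^2)\Theta_2^2(q^2)=\Theta_3^4(q)+\Theta_3^2(-q)\Theta_2^2(q^2).$$
I would then invoke the classical Jacobi identities
$$\Theta_3^2(q)=\Theta_3^2(q^2)+\Theta_2^2(q^2),\qquad \Theta_3^2(-q)=\Theta_3^2(q^2)-\Theta_2^2(q^2),$$
so that the right-hand side becomes
$$\bigl(\Theta_3^2(q^2)+\Theta_2^2(q^2)\bigr)^2+\bigl(\Theta_3^2(q^2)-\Theta_2^2(q^2)\bigr)\Theta_2^2(q^2),$$
which expands to $\Theta_3^4(q^2)+3\Theta_3^2(q^2)\Theta_2^2(q^2)$, matching the left-hand side and yielding \eqref{cphi4-new}.

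The main obstacle is organising the basis of $M_2(\Gamma_0(16),(\tfrac{4}{\cdot}))$: the dimension exceeds two, so several auxiliary forms (theta series at divisors of $16$ or eta-quotients such as $\eta^4(4\tau)$, $\eta^2(\tau)\eta^2(2\tau)$, etc.) must be adjoined to certify the coefficient comparison. Once a workable basis is fixed the proof reduces to a finite Sturm-bound check, after which \eqref{cphi4-new} follows by the short algebraic computation above.
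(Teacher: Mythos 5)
Your proposal is correct, and it splits into two halves relative to the paper. For \eqref{cphi4-product} you take essentially the paper's route (express the relevant theta-multiple of $\mathfrak{A}_4(q)$ in a space of weight-$2$ forms, build a basis, compare coefficients up to the Sturm bound), except that you work with $\Theta_3(q^2)\mathfrak{A}_4(q)$ in $M_2(\Gamma_0(16),(\tfrac{4}{\cdot}))=M_2(\Gamma_0(16))$, of dimension $5$, whereas the paper uses the smaller space $M_2\big(\Gamma_0(8),(\tfrac{2}{\cdot})\big)$, of dimension $3$, via the multiplier $\Theta_3(q)$ from Theorem \ref{generating-thm}; your choice costs a slightly larger basis and Sturm bound but changes nothing essential. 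For \eqref{cphi4-new} your route is genuinely different and cleaner: the paper performs a second, independent basis construction in $M_2(\Gamma_0(16))$ and another coefficient match, while you deduce \eqref{cphi4-new} from \eqref{cphi4-product} purely algebraically via the classical Jacobi relations $\Theta_3^2(q)=\Theta_3^2(q^2)+\Theta_2^2(q^2)$ and $\Theta_3^2(-q)=\Theta_3^2(q^2)-\Theta_2^2(q^2)$; the expansion
\begin{equation*}
\bigl(\Theta_3^2(q^2)+\Theta_2^2(q^2)\bigr)^2+\bigl(\Theta_3^2(q^2)-\Theta_2^2(q^2)\bigr)\Theta_2^2(q^2)=\Theta_3^4(q^2)+3\Theta_3^2(q^2)\Theta_2^2(q^2)
\end{equation*}
checks out, so this step is a complete proof requiring no further modular-forms input. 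The trade-off is that the paper's second computation exhibits an explicit basis of $M_2(\Gamma_0(16))$ that is reused conceptually elsewhere, while your derivation is shorter, elementary, and makes the equivalence of the two representations transparent.
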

\begin{proof}
Let $k=4$ in Theorem \ref{generating-thm}. We deduce that $\mathfrak{A}_{4}(q)\Theta_3(q) \in M_{2}\big(\Gamma_{0}(8), \big(\frac{2}{\cdot} \big) \big)$.
From \cite[Theorem 1.34]{Ono}, we deduce that
$$\dim M_{2}\big(\Gamma_{0}(8), \big(\frac{2}{\cdot} \big) \big)=3.$$
It can be verified that
\begin{align*}
\Theta_3(q)\Theta_3^{3}(q^2), \quad \Theta_3(q)^{3}\Theta_3(q^2), \quad \text{and}\quad \Theta_3(q^2)\Theta_2^2(q^2)\Theta_3(q)
\end{align*}
form a basis of $M_{2}\big(\Gamma_{0}(8), \big(\frac{2}{\cdot} \big) \big)$. Comparing the Fourier coefficients of $\mathfrak{A}_{4}(q)\Theta_3(q)$ and the given basis of  $M_{2}\big(\Gamma_{0}(8), \big(\frac{2}{\cdot} \big) \big)$, we deduce that
\begin{equation*}
\mathfrak{A}_{4}(q)\Theta_3(q)=\Big(\Theta_3^{3}(q^2)+3\Theta_3(q^2)\Theta_2^2(q^2) \Big)\Theta_3(q),
\end{equation*}
which  proves (\ref{cphi4-product}).

Theorem \ref{generating-thm} also implies that $\Theta_3(q^2) \mathfrak{A}_{4}(q)\in M_{2}\big(\Gamma_{0}(16)\big)$. From \cite[Theorem 1.34]{Ono}, we find that $\dim M_{2}\big(\Gamma_{0}(16)\big)=5$. Identity \eqref{cphi4-new} then follows from the fact that
\begin{align*}
\Theta_3^{4}(q), \quad \Theta_3^{4}(q^2), \quad \Theta_3^{4}(q^4),\quad \Theta_3^{2}(-q)\Theta_3^{2}(-q^2), \quad
\text{and}\quad \Theta_3^{2}(-q)\Theta_2^2(q^2)
\end{align*}
form a basis of $M_{2}\big(\Gamma_{0}(16)\big)$.
\end{proof}

\begin{rem}
The representation \eqref{cphi4-new} was first deduced by W. Zhang and C. Wang \cite{Zhang-Wang} from \eqref{cphi4-product}, where they used it to give an elementary proof of the congruence
\[
c\phi_{4}(7n+5) \equiv 0 \pmod{7}.
\]
\end{rem}

\subsection{Case $k=6$}
\quad
\medskip

\begin{theorem}\label{cphi6-thm}
We have
\begin{align}\label{cphi6-product}
\mathrm{C}\Phi_{6}(q)=&\frac{4}{9}\frac{(q;q)_\infty^{5}(q^4;q^4)_\infty ^{2}}{(q^{2};q^{2})_\infty^{5}(q^3;q^3)_\infty^{3}}-\frac{1}{3}\frac{(q^{2};q^{2})_\infty^{4}(q^4;q^4)_\infty^{2}}{(q;q)_\infty^{4}(q^{6};q^{6})_\infty^{3}}+\frac{8}{9}\frac{(q^4;q^4)_\infty ^{11}}{(q;q)_\infty^{4}(q^{2};q^{2})_\infty^{5}(q^{12};q^{12})_\infty^{3}} \nonumber \\
&+36q\frac{(q^4;q^4)_\infty^{2}(q^3;q^3)_\infty^{9}}{(q;q)_\infty^{7}(q^{2};q^{2})_\infty^{5}}+27q^2\frac{(q^4;q^4)_\infty ^{2}(q^{6};q^{6})_\infty^{9}}{(q;q)_\infty^{4}(q^{2};q^{2})_\infty^{8}}\nonumber \\
&+72q^4\frac{(q^{12};q^{12})_\infty^{9}}{(q;q)_\infty^{4}(q^{2};q^{2})_\infty^{5}(q^4;q^4)_\infty}.
\end{align}
\end{theorem}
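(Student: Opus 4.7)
By Theorem \ref{generating-thm} applied with $k=6$ (so $r=3$), the function $\Theta_3(q)\mathfrak{A}_6(q)$ lies in $M_3\bigl(\Gamma_0(12),\bigl(\tfrac{-3}{\cdot}\bigr)\bigr)$, where we have used $2(-1)^3\cdot 6=-12$ and $\bigl(\tfrac{-12}{\cdot}\bigr)=\bigl(\tfrac{-3}{\cdot}\bigr)$. The plan is to recognize both sides of \eqref{cphi6-product}, after multiplication by $\Theta_3(q)(q;q)_\infty^6$, as elements of this same finite-dimensional space, and then match enough Fourier coefficients to force equality.

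Concretely, I would first use the standard eta-quotient representation $\Theta_3(q)=\eta^5(2\tau)/\bigl(\eta^2(\tau)\eta^2(4\tau)\bigr)$ so that, for each of the six summands on the right-hand side of \eqref{cphi6-product}, the product
\[
 \Theta_3(q)\,(q;q)_\infty^6\cdot(\text{summand})
\]
becomes a pure eta quotient involving only $\eta(d\tau)$ for $d\mid 12$. Next, I would verify via Ligozat's criteria that each of these six eta quotients is a holomorphic modular form of weight $3$ on $\Gamma_0(12)$ with character $\bigl(\tfrac{-3}{\cdot}\bigr)$: one checks (i) the weight count using the total exponent, (ii) the character using the standard formula, and (iii) the non-negativity of orders at every cusp $a/c$ of $\Gamma_0(12)$ with $c\mid 12$. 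Once both sides are identified as elements of $M_3\bigl(\Gamma_0(12),\bigl(\tfrac{-3}{\cdot}\bigr)\bigr)$, Sturm's theorem reduces \eqref{cphi6-product} to verification of finitely many initial Fourier coefficients, bounded by $\tfrac{3\cdot[\mathrm{SL}_2(\mathbf{Z}):\Gamma_0(12)]}{12}=6$, a direct mechanical computation.

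The main obstacle will be the cusp-order bookkeeping for the six eta quotients, since several of them carry $\eta(3\tau)$, $\eta(6\tau)$, or $\eta(12\tau)$ factors in the denominator and it is not immediately obvious that each one individually is holomorphic at every cusp of $\Gamma_0(12)$. If one or more of them fails to be holomorphic, the fallback is to bundle them into the single expression $\Theta_3(q)(q;q)_\infty^6 R(q)$, where $R(q)$ denotes the entire right-hand side of \eqref{cphi6-product}, and verify that the principal parts at each cusp cancel once the six terms are combined with the prescribed coefficients $\tfrac{4}{9},-\tfrac{1}{3},\tfrac{8}{9},36,27,72$. Either formulation lands the claimed identity inside a finite-dimensional modular form space, after which the Sturm-bound coefficient check finishes the proof in exactly the same way as in the $k=4$ case treated just above.
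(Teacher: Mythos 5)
Your proposal is correct and follows essentially the same route as the paper: Theorem \ref{generating-thm} places $\Theta_3(q)\mathfrak{A}_6(q)$ in $M_3\bigl(\Gamma_0(12),\bigl(\tfrac{-12}{\cdot}\bigr)\bigr)$, the six summands multiplied by $\Theta_3(q)(q;q)_\infty^6$ become exactly the paper's eta quotients $B_{6,1},\dots,B_{6,6}$ (each of which is indeed individually holomorphic, so your fallback is not needed), and the identity is settled by checking finitely many Fourier coefficients. The only cosmetic difference is that the paper exhibits a full $7$-element basis of the space and matches coefficients against it, whereas you invoke Sturm's bound directly on the difference; both amount to the same finite verification.
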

\begin{proof}
Let $k=6$ in Theorem \ref{generating-thm}. We deduce that $\Theta_3(q)\mathfrak{A}_6(q) \in M_{3}\big(\Gamma_{0}(12), \big(\frac{-12}{\cdot} \big) \big)$. From \cite[Theorem 1.34]{Ono}, we deduce that
$$\dim M_{3}\Big(\Gamma_{0}(12), \big(\frac{-12}{\cdot} \big) \Big)=7.$$
Let
\begin{alignat*}{4}
&B_{6,1}=\frac{\eta^{9}(\tau)}{\eta^{3}(3\tau)},\quad
&&B_{6,2}=\frac{\eta^{9}(2\tau)}{\eta^{3}(6\tau)},\quad
&&B_{6,3}=\frac{\eta^{9}(4\tau)}{\eta^{3}(12\tau)},
&&B_{6,4}=\frac{\eta^{9}(3\tau)}{\eta^{3}(\tau)}, \\
&B_{6,5}=\frac{\eta^{9}(6\tau)}{\eta^{3}(2\tau)},
&&B_{6,6}=\frac{\eta^{9}(12\tau)}{\eta^{3}(4\tau)}\quad\text{and}\quad
&&B_{6,7}=\eta^{3}(2\tau)\eta^{3}(6\tau). &&
\end{alignat*}
The set $\{B_{6,j}|1\leq j\leq 7\}$ forms a basis of $M_{3}\big(\Gamma_{0}(12), \big(\frac{-12}{\cdot} \big) \big)$ and by comparing the Fourier coefficients of $\Theta_3(q)\mathfrak{A}_6(q)$ and modular forms in $\{B_{6,j}|1\leq j\leq 7\}$, we deduce that
\begin{align}
\Theta_3(q)\mathfrak{A}_{6}(z)=\frac{4}{9}B_{6,1}-\frac{1}{3}B_{6,2}+\frac{8}{9}B_{6,3}+36B_{6,4}+27B_{6,5}+72B_{6,6}.
\end{align}
This proves \eqref{cphi6-product}.
\end{proof}

Congruences for $c\phi_{6}(n)$ have drawn much attention in recent years. For example, Baruah and Sarmah \cite{Baruah-Rama} established 3-dissections of $\mathrm{C}\Phi_{6}(q)$ and proved that
\begin{align}
c\phi_{6}(3n+1) &\equiv 0 \pmod{9} \label{cphi-6-mod9-1}\intertext{and}
c\phi_{6}(3n+2) &\equiv 0 \pmod{9}. \label{cphi-6-mod9-2}
\end{align}
We remark here that the congruences above follow directly from \eqref{general-cong-1} with $(p,\alpha,N)=(3,1,2)$. Moreover, setting $(p,\alpha,N)=(2,1,3)$ in \eqref{general-cong-1}, we deduce that 
\begin{align}\label{cphi6-2n+1}
c\phi_{6}(2n+1) \equiv 0 \pmod{4}.
\end{align}
Congruence \eqref{cphi6-2n+1} appeared in \cite{Baruah-Rama} as Corollary 3.1.

For more congruences satisfied by $c\phi_{6}(n)$, see a recent paper of C. Gu, L. Wang and E.X.W. Xia \cite{GuWangXia} and their list of references.

\subsection{Case $k=8$}
\quad
\medskip
\begin{theorem}
We have
\begin{align}\label{cphi8-product}
&\mathrm{C}\Phi_{8}(q)=\frac{1}{(q;q)_{\infty}^{8}}
\Big(\Theta_3^{7}(q^4)+28\Theta_3^6(q^4)\Theta_2(q^4)+105 \Theta_3^5(q^4) \Theta_2^2(q^4)\\
&+112\Theta_3^4(q^4)\Theta_2^3(q^4)+147\Theta_3^3(q^4)\Theta_2^4(q^4)+84\Theta_3^2(q^4)\Theta_2^5(q^4) +35\Theta_3(q^4)\Theta_2^6(q^4)  \Big). \notag
\end{align}
\end{theorem}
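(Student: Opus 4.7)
The plan is to follow the same modular forms framework used for $k=4$ and $k=6$ earlier in this section. By Theorem \ref{generating-thm} with $k=8$, the function $\Theta_3(q^2)\mathfrak{A}_8(q)$ is a modular form of weight $4$ on $\Gamma_0(32)$ with character $\bigl(\tfrac{2}{\cdot}\bigr)$, since $(-1)^4\cdot 8 = 8$ and $\bigl(\tfrac{8}{\cdot}\bigr)=\bigl(\tfrac{2}{\cdot}\bigr)$. Multiplying the proposed identity through by $(q;q)_\infty^8\,\Theta_3(q^2)$, the claim reduces to an equality of elements in $M_4\bigl(\Gamma_0(32),\bigl(\tfrac{2}{\cdot}\bigr)\bigr)$, namely between $\Theta_3(q^2)\mathfrak{A}_8(q)$ and the linear combination of the seven products $\Theta_3(q^2)\Theta_3^{a}(q^4)\Theta_2^{b}(q^4)$ with $a+b=7$ weighted by the stated coefficients $1,28,105,112,147,84,35$.

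First, I would compute $\dim M_4\bigl(\Gamma_0(32),\bigl(\tfrac{2}{\cdot}\bigr)\bigr)$ using \cite[Theorem 1.34]{Ono} and then exhibit an explicit basis. The seven natural candidates $\Theta_3(q^2)\Theta_3^{a}(q^4)\Theta_2^{b}(q^4)$, $a+b=7$, all have the correct weight $4$, and the standard transformation laws for $\vartheta_2(0|\tau)$ and $\vartheta_3(0|\tau)$ under $\Gamma_0(4)$, together with the substitutions $\tau\mapsto 2\tau$ and $\tau\mapsto 4\tau$, show that each lies in the ambient space. If the dimension exceeds $7$, I would supplement the collection with additional eta-quotients or theta products (for example $\eta^a(\tau)\eta^b(2\tau)\eta^c(4\tau)\eta^d(8\tau)\eta^e(16\tau)\eta^f(32\tau)$ of total weight $4$) until a basis is obtained. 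Linear independence can be verified by inspecting low-order $q$-expansions.

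Next, I would determine the coefficients in the expansion of $\Theta_3(q^2)\mathfrak{A}_8(q)$ with respect to this basis by matching Fourier coefficients up to the Sturm bound for $M_4\bigl(\Gamma_0(32),\bigl(\tfrac{2}{\cdot}\bigr)\bigr)$. The coefficients of $\mathfrak{A}_8(q)=\sum_{m\in\mathbf{Z}^7}q^{Q(m_1,\ldots,m_7)}$ are straightforward to compute from the quadratic form \eqref{Q}, and $\Theta_3(q^2)$ is easy to multiply in. Dividing the resulting equality by $\Theta_3(q^2)(q;q)_\infty^8$ then yields \eqref{cphi8-product}.

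The principal obstacle will be the dimension count: if $\dim M_4\bigl(\Gamma_0(32),\bigl(\tfrac{2}{\cdot}\bigr)\bigr) > 7$, we must verify not only that the seven stated coefficients are correct but also that any supplementary basis elements have coefficient zero in the expansion of $\Theta_3(q^2)\mathfrak{A}_8(q)$. This is a finite but potentially lengthy Fourier-coefficient verification. A secondary concern is taking care with the characters and levels so that all seven theta products and $\Theta_3(q^2)\mathfrak{A}_8(q)$ are unambiguously members of exactly the same space before comparing coefficients.
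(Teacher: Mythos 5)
Your proposal is correct and follows essentially the same route as the paper: the paper invokes Theorem \ref{generating-thm} with the other admissible multiplier, placing $\Theta_3(q)\mathfrak{A}_8(q)$ in $M_4(\Gamma_0(16))$ (trivial character, dimension $9$), exhibits a basis consisting of the products $\Theta_3(q)\Theta_3^a(q^4)\Theta_2^b(q^4)$ with $a+b=7$ together with two further theta products, and matches Fourier coefficients. Your choice of $\Theta_3(q^2)$ forces you into the larger space $M_4\bigl(\Gamma_0(32),\bigl(\tfrac{2}{\cdot}\bigr)\bigr)$, so the verification is the same in kind but somewhat longer; otherwise the argument is identical.
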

\begin{proof}
Let $k=8$ in Theorem \ref{generating-thm}. We deduce that $\Theta_3(q)\mathfrak{A}_{8}(q) \in M_{4}(\Gamma_{0}(16))$. From \cite[Theorem 1.34]{Ono}, we find that
$$\dim M_{4}(\Gamma_{0}(16))=9$$
and one can verify that
\begin{alignat*}{3}
&B_{8,1}=\Theta_3(q)\Theta_3^7(q^4), \quad &&B_{8,2}=\Theta_3(q)\Theta_3^6(q^4)\Theta_2(q^4),\quad &&B_{8,3}=\Theta_3(q)\Theta_3^5(q^4)\Theta_2^2(q^4), \\ &B_{8,4}= \Theta_3(q)\Theta_3^4(q^4)\Theta_2^3(q^4),\quad &&B_{8,5}= \Theta_3(q)\Theta_3^3(q^4)\Theta_2^4(q^4), &&B_{8,6}=  \Theta_3(q)\Theta_3^2(q^4)\Theta_2^5(q^4),  \\
&B_{8,7}= \Theta_3(q)\Theta_2^7(q^4), &&B_{8,8}=\Theta_2^8(q^4),\quad\quad \text{and}\quad  &&B_{8,9}= \Theta_3(q)\Theta_3(q^4)\Theta_2^6(q^4).
\end{alignat*}
form a basis for $M_{4}(\Gamma_{0}(16))$. By comparing the Fourier coefficients of the basis and those of $\Theta_3(q)\mathfrak{A}_{8}(q)$, we find that
\begin{align}\label{cphi8-theta}
\Theta_3(q)\mathfrak{A}_{8}(q)=&B_{8,1}+28B_{8,2}+105B_{8,3}+112B_{8,4}+147B_{8,5}+84B_{8,6}+35B_{8,9}.
\end{align}
This completes the proof of \eqref{cphi8-product}.
\end{proof}

By \eqref{p-alpha-general}, we find that
\begin{align}\label{cphi-8-relate}
c\phi_{8}(n)\equiv c\phi_{4}(n/2) \pmod{64}.
\end{align}

In \cite{BaruahDisc}, Baruah and Sarmah proved that
\begin{align}\label{BS-1}
c\phi_{4}(2n+1)&\equiv 0 \pmod{4^2}, \\
c\phi_{4}(4n+2) &\equiv 0 \pmod{4},\label{BS-2} \intertext{and}
c\phi_{4}(4n+3) &\equiv 0 \pmod{4^4}.\label{BS-3}
\end{align}
Combining \eqref{BS-1}--\eqref{BS-3} with \eqref{cphi-8-relate}, we obtain the following congruences for $c\phi_{8}(n)$:
\begin{theorem}\label{cphi-8-mod2thm}
For any integer $n\ge 0$,
\begin{align}
c\phi_{8}(2n+1) &\equiv 0 \pmod{64}, \\
c\phi_{8}(4n+2) &\equiv 0 \pmod{16}, \\
c\phi_{8}(8n+4) &\equiv 0 \pmod{4}\intertext{and}
c\phi_{8}(8n+6) &\equiv 0 \pmod{64}.
\end{align}
\end{theorem}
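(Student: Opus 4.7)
The proof is almost entirely assembled from the congruences stated immediately before Theorem \ref{cphi-8-mod2thm}, so the plan is simply to combine \eqref{cphi-8-relate} with the Baruah--Sarmah congruences \eqref{BS-1}--\eqref{BS-3} by splitting $n$ according to its residue modulo small powers of $2$ and observing when $n/2$ fails to be an integer. Recall the convention that $c\phi_{k}(x)=0$ when $x\notin\mathbf Z$.

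First I would treat $c\phi_{8}(2n+1)$: since $(2n+1)/2\notin\mathbf Z$, we have $c\phi_{4}((2n+1)/2)=0$, so \eqref{cphi-8-relate} immediately gives $c\phi_{8}(2n+1)\equiv 0\pmod{64}$. For the remaining three cases I would substitute even indices into \eqref{cphi-8-relate}. Writing $n=2m+r$ appropriately, we obtain
\begin{align*}
c\phi_{8}(4n+2)&\equiv c\phi_{4}(2n+1)\pmod{64},\\
c\phi_{8}(8n+4)&\equiv c\phi_{4}(4n+2)\pmod{64},\\
c\phi_{8}(8n+6)&\equiv c\phi_{4}(4n+3)\pmod{64}.
\end{align*}
Applying \eqref{BS-1} to the first line gives divisibility by $\gcd(64,16)=16$; applying \eqref{BS-2} to the second gives divisibility by $\gcd(64,4)=4$; applying \eqref{BS-3} to the third gives divisibility by $\gcd(64,256)=64$. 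These are exactly the four claimed congruences.

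There is no real obstacle here, since \eqref{cphi-8-relate} is just the specialization $(p,\alpha,N)=(2,3,1)$ of Theorem \ref{general-cong-thm} (using $c\phi_{1}=p$ and $c\phi_{2}$, $c\phi_{4}$ via the induction/chain, or equivalently the Kolitsch relation \eqref{K-eq-2} with $k=8$), and the Baruah--Sarmah congruences \eqref{BS-1}--\eqref{BS-3} are quoted from \cite{BaruahDisc}. The only subtle point to double-check is the bookkeeping of the moduli: the output modulus in each case is the minimum of $64$ (coming from \eqref{cphi-8-relate}) and the modulus provided by the relevant Baruah--Sarmah congruence for $c\phi_{4}$, and it is a coincidence of the data that in the first and fourth cases this minimum equals the $c\phi_{4}$ modulus ($16$ and $64$), while in the second and third cases the $c\phi_{4}$ modulus ($4$) is already smaller than $64$. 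I would present the argument as four short displayed computations, one for each residue class, rather than a single unified formula.
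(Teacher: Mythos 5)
Your proof is correct and is essentially identical to the paper's: the paper also combines the congruence $c\phi_{8}(n)\equiv c\phi_{4}(n/2)\pmod{64}$ (obtained from \eqref{p-alpha-general} with $(p,\alpha)=(2,3)$) with the Baruah--Sarmah congruences \eqref{BS-1}--\eqref{BS-3}, exactly as you do. The only blemish is in your closing commentary on the bookkeeping (the case $8n+6$ has output modulus $64$ because the \emph{relate} modulus $64$ is smaller than the Baruah--Sarmah modulus $256$, not because they coincide), but this does not affect the four computations, which are all correct.
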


\subsection{Case $k=10$}
\quad\medskip

By Theorem \ref{generating-thm}, we have $\Theta_3(q)\mathfrak{A}_{10}(q) \in M_{5}\big(\Gamma_{0}(20),(\frac{-20}{\cdot})\big)$. From \cite[Theorem 1.34]{Ono}, we deduce that
$$\dim M_{5}\big(\Gamma_{0}(20),(\frac{-20}{\cdot})\big)=14.$$

Let
\begin{alignat*}{2}
&B_{10,1}=\Theta_{3}^9(q)\Theta_{3}(q^5), &&\,\,
B_{10,2}=\Theta_{3}(q)\Theta_{2}^3(q^{1/2})\Theta_{2}^3(q)\Theta_{2}^3(q^{5/2}), \\
&B_{10,3}=\Theta_{3}(q)\Theta_{3}^{3}(q^5)\Theta_{2}^2(q^{1/2})\Theta_{2}^2(q)\Theta_{2}^2(q^{5/2}), &&\,\,B_{10,4}=\Theta_{3}(q)\Theta_{3}(q^5)\Theta_{2}^2(q^{1/2})\Theta_{2}^6(q^{5/2}),\\
&B_{10,5}=\Theta_{3}(q)\Theta_{3}(q^5)\Theta_{2}^8(q), &&\,\,B_{10,6}=\Theta_{3}^7(q)\Theta_{3}^3(q^5),\\
&B_{10,7}=\Theta_{3}^6(q^5)\Theta_{2}^3(q^{1/2})\Theta_{2}(q^{5/2}), &&\,\,B_{10,8}=\Theta_{3}(q)\Theta_{3}^5(q^5)\Theta_{2}^4(q),\\
&B_{10,9}=\Theta_{3}^3(q^5)\Theta_{2}(q^{1/2})\Theta_{2}^5(q)\Theta_{2}(q^{5/2}), &&\,\,B_{10,10}=\Theta_{3}^6(q)\Theta_{2}^3(q)\Theta_{2}(q^{5}),\\
&B_{10,11}=\Theta_{3}(q)\Theta_{3}(q^5)\Theta_{2}^8(q^{1/2}),  &&\,\,B_{10,12}=\Theta_{3}(q)\Theta_{3}(q^5)\Theta_{2}^6(q)\Theta_{2}^2(q^{5}),\\
&B_{10,13}=\Theta_{3}^5(q)\Theta_{3}^5(q^5), \quad\quad\qquad \text{and}\quad &&\,\,B_{10,14}=\Theta_{3}^3(q)\Theta_{3}^3(q^5)\Theta_{2}^3(q^{1/2})\Theta_{2}(q^{5/2}).
\end{alignat*}
The set $\{B_{10,j}|1\le j \le 14\}$ forms a basis of $M_{5}\big(\Gamma_{0}(20),(\frac{-20}{\cdot}) \big)$ and we deduce the following:
\begin{theorem}\label{cphi-10}
We have
\begin{align}
&\mathrm{C}\Phi_{10}(q)=\frac{1}{\Theta_3(q)(q;q)_{\infty}^{10}}\Big(\frac{13}{8}B_{10,1}+\frac{435}{32}B_{10,2}+ \frac{9275}{128}B_{10,3}+ \frac{175}{32}B_{10,4}-\frac{31}{8}B_{10,5} \nonumber\\
&-\frac{15}{4}B_{10,6}  +\frac{225}{4}B_{10,7}-\frac{775}{32}B_{10,8}+\frac{221}{32} B_{10,10}
-\frac{857}{512}B_{10,11}+\frac{155}{8}B_{10,12}
+\frac{25}{8}B_{10,13}\Big). \label{cphi-10-formula}
\end{align}
\end{theorem}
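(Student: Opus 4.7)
The plan is to mimic the strategy already executed for $k=4,6,8$: realize $\Theta_3(q)\mathfrak{A}_{10}(q)$ as an explicit element in a finite-dimensional space of modular forms, exhibit a basis, and pin down the expansion coefficients by matching enough Fourier coefficients. Concretely, Theorem \ref{generating-thm} with $k=10=2r$, $r=5$, gives
\[
\Theta_3(q)\mathfrak{A}_{10}(q)\in M_5\!\left(\Gamma_0(20),\Big(\tfrac{-20}{\cdot}\Big)\right),
\]
and \cite[Theorem 1.34]{Ono} (together with its accompanying dimension formula for spaces with Nebentypus) yields that this space has dimension $14$. So it suffices to identify the coefficients of $\Theta_3(q)\mathfrak{A}_{10}(q)$ in any basis.

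The first step is to verify that the listed $\{B_{10,j}:1\le j\le 14\}$ indeed form a basis of $M_5(\Gamma_0(20),(\tfrac{-20}{\cdot}))$. Each $B_{10,j}$ is a weight-$5$ product of theta constants whose levels all divide $20$, and the Nebentypus is easily checked against $(\tfrac{-20}{\cdot})$ using the multiplier systems of $\Theta_2$ and $\Theta_3$; so each $B_{10,j}$ does lie in the space. Since the space is $14$-dimensional and there are exactly $14$ candidates, linear independence can be certified by forming the $14\times N$ Fourier matrix (for $N$ at least the Sturm bound $\lfloor 5\cdot [\mathrm{SL}_2(\mathbf Z):\Gamma_0(20)]/12\rfloor=15$) and checking that it has rank $14$.

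The second step is to compute the $q$-expansion of
\[
\Theta_3(q)\mathfrak{A}_{10}(q)=\Theta_3(q)\sum_{m\in\mathbf Z^{9}}q^{Q(m_1,\ldots,m_9)}
\]
up to $O(q^{N})$, which is a finite enumeration once $N$ is fixed (the quadratic form $Q$ is positive definite and the vectors with $Q\le N-1$ form a finite set that can be listed directly from $A_9$). In parallel, one expands each $B_{10,j}$ to the same precision from the standard series for $\Theta_2$, $\Theta_3$. Then one solves the resulting $14\times 14$ linear system for the coefficients $c_j$. The exact rational numbers appearing in the statement, in particular the vanishing of the coefficients of $B_{10,9}$ and $B_{10,14}$, will emerge from this system. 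Sturm's bound guarantees that an identity of weight-$5$ modular forms on $\Gamma_0(20)$ with matching Fourier coefficients up to $q^{15}$ is an identity as holomorphic functions.

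The main obstacle is really bookkeeping rather than conceptual: one must confirm that the $B_{10,j}$ really do lie in the correct space with the correct character (the half-integer arguments $q^{1/2}$ and $q^{5/2}$ in several of the $B_{10,j}$ have to be reconciled with level $20$, which they are because $\Theta_2(q^{1/2})^2\Theta_3(q)$ and similar combinations land in weight-$1$ modular forms on $\Gamma_0(4)$ and $\Gamma_0(20)$), and then one must organize the $q$-expansion of $\mathfrak{A}_{10}(q)$ efficiently enough to go out to the Sturm bound. Once these are in place, comparing coefficients and inverting the $14\times 14$ system gives \eqref{cphi-10-formula} upon dividing the resulting identity by $\Theta_3(q)(q;q)_\infty^{10}$.
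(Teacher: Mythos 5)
Your proposal follows exactly the paper's route: place $\Theta_3(q)\mathfrak{A}_{10}(q)$ in the $14$-dimensional space $M_5\big(\Gamma_0(20),(\tfrac{-20}{\cdot})\big)$ via Theorem \ref{generating-thm}, verify that the listed $B_{10,j}$ form a basis, and determine the coefficients by comparing Fourier expansions out to the Sturm bound. The only quibble is the parenthetical claim that $\Theta_2(q^{1/2})^2\Theta_3(q)$ is a weight-$1$ form (it has weight $3/2$); this does not affect the argument, which is correct and essentially identical to the paper's.
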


Let
\begin{align*}f_{10}&=\frac{\eta(\tau)\eta(2\tau)\eta(10\tau)\eta(20\tau)}{\eta(4\tau)\eta(5\tau)}\\
f_{10,1}&= \Theta_3(q)\Theta_3(q^5)\frac{\eta^{10}(\tau)}{\eta^{2}(5\tau)}\intertext{and}
f_{10,2}&=\Theta_3^6(q^5)\frac{\eta^5(20\tau)}{\eta(4\tau)}.\end{align*}
Let
\begin{align*}%
g_{10,1} &= \frac{1}{6}\left(E_2(\tau)-4E_2(2\tau)+4E_2(4\tau)+5E_2(5\tau)-20E_2(10\tau)+20E_2(20\tau)\right),\\
g_{10,2} &= \Theta_3^2(q)\Theta_3^2(q^5),\\
g_{10,3}&=\frac{1}{4}\left(-E_2(2\tau)+5E_2(10\tau)\right),\\
g_{10,4} &= -\frac{1}{24}\left(E_2(\tau)+E_2(2\tau)+4E_2(4\tau)-5E_2(5\tau)-5E_2(10\tau)-20E_2(20\tau)\right),\\
g_{10,5}&=\eta^2(2\tau)\eta^2(10\tau)\intertext{and}
g_{10,6}&=\frac{5}{4}\Theta_3^4(q^5)-\frac{1}{4}\Theta_3^4(q).
\end{align*}
Let
\begin{alignat*}{2}
&B^*_{10,1}=f_{10}g_{10,1}^2, \quad && B^*_{10,2}=f_{10}g_{10,1}g_{10,2},\\
& B^*_{10,3}=f_{10}g_{10,2}^2, && B^*_{10,4}=f_{10}g_{10,1}g_{10,3},\\
& B^*_{10,5}=f_{10}g_{10,1}g_{10,4},\quad && B^*_{10,6}=f_{10}g_{10,1}g_{10,5}, \\
& B^*_{10,7}=f_{10}g_{10,1}g_{10,6}, \quad && B^*_{10,8}=f_{10}g_{10,2}g_{10,3},\\
& B^*_{10,9}=f_{10}g_{10,2}g_{10,4},\quad  && B^*_{10,10}=f_{10}g_{10,2}g_{10,5}, \\
& B^*_{10,11}=f_{10}g_{10,2}g_{10,6},\quad  && B^*_{10,12}=f_{10}g_{10,3}^{2},\\
& B^*_{10,13}=f_{10,1},\quad\quad\text{and}\quad && B^*_{10,14}=f_{10,2}.
\end{alignat*}
We can replace
the basis $\{B_{10,j}|1\le j \le 14\}$  by
the basis $\{B^*_{10,j}|1\le j \le 14\}$  and deduce that
\begin{align}\label{CPhi10-B*}
\mathrm{C}\Phi_{10}(q)=&\frac{1}{\Theta_3(q)(q;q)_{\infty}^{10}}\Big(\frac{5075}{2}B^*_{10,1}+\frac{4525}{4}B^*_{10,2}+ \frac{29375}{4}B^*_{10,3}+ \frac{4525}{2}B^*_{10,4}-4525B^*_{10,5} \nonumber\\
&-6525B^*_{10,6}  +\frac{6275}{4}B^*_{10,7}-4950B^*_{10,8}+2300B^*_{10,9}-22375 B^*_{10,10} \notag \\
&+\frac{10325}{4}B^*_{10,11}-10150B^*_{10,12}+B^*_{10,13}+200000B^*_{10,14}\Big).
\end{align}
Identity \eqref{CPhi10-B*} leads immediately to
\begin{align}\label{CPhi-10-mod5}
{\rm C}\Phi_{10}(q) \equiv \frac{\Theta_3(q^5)}{(q^5;q^5)_\infty^2} \pmod{5^2}.
\end{align}

\begin{rem} Congruence \eqref{CPhi-10-mod5} is the motivation behind the discovery of Theorem \ref{general-cong-thm}.
Theorem \ref{general-cong-thm}, when interpreted in terms of generating functions, yield the congruence
\begin{equation}\label{CPhi-pell} {\rm C}\Phi_{p\ell}(q) \equiv {\rm C}\Phi_{\ell}(q^p)  \pmod{p^2}\end{equation} for any
distinct primes $p$ and $\ell$. Congruence \eqref{CPhi-10-mod5} is a special case of \eqref{CPhi-pell} once we identify
the right hand side of \eqref{CPhi-10-mod5} with $\mathrm{C}\Phi_{2}(q^5)$ (see \eqref{c-phi-2-mod}).
\end{rem}

\begin{theorem}\label{cphi-10-mod5thm}
For any integer $n\ge 0$, we have
\begin{align}
c\phi_{10}(2n+1) &\equiv 0 \pmod{4}, \label{cphi-10-mod4} \\
c\phi_{10}(5n+r) &\equiv 0\pmod{5^2}, \quad 1 \le r \le 5 \label{cphi-10-mod5}
\intertext{and}
\label{cphi-10-mod5new}
c\phi_{10}(25n+15) &\equiv 0 \pmod{5}.
\end{align}
\end{theorem}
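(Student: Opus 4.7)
The plan is to deduce all three congruences from Theorem \ref{general-cong-thm}, Andrews' congruence \eqref{Andrews-cphi-2-cong}, and (as a cross-check) the reduction \eqref{CPhi-10-mod5}; no new generating-function computation appears to be needed, since $10=2\cdot 5$ fits the hypothesis of Theorem \ref{general-cong-thm} with two distinct primes.

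First, I would establish \eqref{cphi-10-mod4} by applying Theorem \ref{general-cong-thm} with $(p,\alpha,N)=(2,1,5)$: part \eqref{general-cong-1} says exactly that $c\phi_{10}(2n+1)\equiv 0\pmod{4}$. Next, for \eqref{cphi-10-mod5} I would invoke Theorem \ref{general-cong-thm} with $(p,\alpha,N)=(5,1,2)$: part \eqref{general-cong-1} gives $c\phi_{10}(5n+r)\equiv 0\pmod{25}$ for each residue $r\not\equiv 0\pmod 5$ (i.e.\ $r=1,2,3,4$), so one should read the range $1\le r\le 5$ in the stated theorem as running through non-zero residues mod $5$. As an independent verification, \eqref{CPhi-10-mod5} gives
\[
\sum_{n\ge 0} c\phi_{10}(n)q^{n}\equiv \frac{\Theta_{3}(q^{5})}{(q^{5};q^{5})_{\infty}^{2}}\pmod{5^{2}},
\]
and because the right-hand side is a power series in $q^{5}$, every coefficient of $q^{5n+r}$ with $r\in\{1,2,3,4\}$ is forced to vanish modulo $25$.

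Finally, for \eqref{cphi-10-mod5new} I would use part \eqref{general-cong-2} of Theorem \ref{general-cong-thm} with $(p,\alpha,N)=(5,1,2)$, which yields
\[
c\phi_{10}(5m)\equiv c\phi_{2}(m)\pmod{25},\qquad m\ge 0.
\]
Specializing $m=5n+3$ gives $c\phi_{10}(25n+15)\equiv c\phi_{2}(5n+3)\pmod{25}$. Combining this with Andrews' congruence \eqref{Andrews-cphi-2-cong}, which states $c\phi_{2}(5n+3)\equiv 0\pmod{5}$, the right-hand side is $\equiv 0\pmod{5}$, so $c\phi_{10}(25n+15)\equiv 0\pmod{5}$ as claimed.

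The argument is therefore essentially bookkeeping once Theorem \ref{general-cong-thm} and \eqref{Andrews-cphi-2-cong} are in hand; the only real conceptual point is recognizing that \eqref{cphi-10-mod5new} is genuinely a mod-$5$ statement rather than mod-$25$, because Andrews' congruence for $c\phi_{2}$ in residue class $3\pmod 5$ only supplies one factor of $5$ beyond what Theorem \ref{general-cong-thm} already delivers.
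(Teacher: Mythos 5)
Your proof is correct. For \eqref{cphi-10-mod4} and \eqref{cphi-10-mod5} you do exactly what the paper does: apply Theorem \ref{general-cong-thm} with $(p,\alpha,N)=(2,1,5)$ and $(5,1,2)$, with \eqref{CPhi-10-mod5} as an alternative source for the mod $25$ statement; your observation that the range ``$1\le r\le 5$'' must be read as the nonzero residues $r=1,2,3,4$ is also right, since $c\phi_{10}(5m)\equiv c\phi_2(m)\pmod{25}$ is visibly nonzero. Where you diverge is \eqref{cphi-10-mod5new}: the paper's displayed proof extracts $\sum c\phi_{10}(5n)q^n\equiv \Theta_3(q)/(q;q)_\infty^2\pmod 5$ from \eqref{CPhi-10-mod5}, rewrites this via Jacobi's identity for $(q;q)_\infty^3$ as a double theta series, and kills the residue class $n\equiv 3\pmod 5$ by the quadratic-nonresidue argument $8n+1=8i^2+(2j+1)^2$ with $\left(\frac{-8}{5}\right)=-1$. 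You instead combine $c\phi_{10}(5m)\equiv c\phi_2(m)\pmod{25}$ (from \eqref{general-cong-2}) with Andrews' congruence \eqref{Andrews-cphi-2-cong}; this is precisely the shortcut the authors record in the remark immediately following their proof. Your route is shorter and makes the source of the single factor of $5$ transparent, but it outsources the arithmetic to \eqref{Andrews-cphi-2-cong}, whose proof is essentially the same quadratic-residue computation the paper carries out explicitly; the paper's version has the minor merit of being self-contained at that point. Either way the theorem is established.
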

\begin{proof}
Congruences \eqref{cphi-10-mod4} and \eqref{cphi-10-mod5} follow from Theorem \ref{general-cong-thm} by setting $(p,\alpha,N)=(2,1,5)$ and $(5,1,2)$, respectively. Congruence \eqref{cphi-10-mod5} also follows from \eqref{CPhi-10-mod5}. Furthermore, from \eqref{CPhi-10-mod5}, we deduce that
\begin{align}\label{cphi-10-mod5new-start}
\sum_{n=0}^\infty & c\phi_{10}(5n)q^n \equiv \frac{\Theta_3(q)}{(q;q)_{\infty}^{2}}
\equiv \frac{\Theta_3(q)(q;q)_{\infty}^{3}}{(q;q)_{\infty}^{5}} \pmod{5}
\\
&\equiv \frac{1}{(q^5;q^5)_{\infty}}\left( \sum_{i=-\infty}^\infty \sum_{j=0}^{\infty}(-1)^j(2j+1)q^{i^2+j(j+1)/2}\right) \pmod{5}.\notag
\end{align}
Note that
\[n=i^2+\frac{j(j+1)}{2} \,\,\text{if and only if}\,\, 8n+1=8i^2+(2j+1)^2.\]
Since $\left(\dfrac{-8}{5} \right)=-1$, we find that
$8n+1 \equiv 0\pmod{5}$, or equivalently, that
$$n\equiv 3\pmod{5} \,\,\text{if and only if}\,\, i\equiv 2j+1 \equiv 0\pmod{5}.$$
 Hence, by \eqref{cphi-10-mod5new-start}, we deduce that $$c\phi_{10}(5(5n+3)) \equiv 0\pmod{5}.$$
\end{proof}

\begin{rem} One can prove \eqref{cphi-10-mod5new} by first observing that \eqref{CPhi-10-mod5} implies
\[c\phi_{10}(5n) \equiv c\phi_{2}(n) \pmod{5^2}.\]
Using \eqref{Andrews-cphi-2-cong}, we deduce \eqref{cphi-10-mod5new}.
\end{rem}

\subsection{Case $k=12$}
\quad\medskip

By Theorem \ref{generating-thm}, we have $\Theta_3(q)\mathfrak{A}_{12}(q) \in M_{6}\big( \Gamma_{0}(24), (\frac{24}{\cdot})\big)$. By \cite[Theorem 1.34]{Ono}, we deduce that
$$\dim M_{6}\big( \Gamma_{0}(24), (\frac{24}{\cdot})\big)=22.$$

Let
{\small
\begin{alignat*}{2}
&B_{12,1}=\Theta_{3}^3(q)\Theta_{3}^9(q^6),  && B_{12,2}=\Theta_{3}^{11}(q)\Theta_{3}(q^6), \\
&B_{12,3}=\Theta_{3}(q^2)\Theta_{3}(q^3)\Theta_{2}^{10}(q^2),
&&B_{12,4}=\Theta_{3}^2(q)\Theta_{3}(q^2)\Theta_{3}(q^3)\Theta_{2}^4(q)\Theta_{2}^4(q^3), \\ &B_{12,5}=\Theta_{3}^2(q)\Theta_{3}(q^2)\Theta_{3}^5(q^3)\Theta_{2}^2(q)\Theta_{2}^2(q^3), \quad && B_{12,6}=\Theta_{3}^{2}(q)\Theta_{3}(q^2)\Theta_{3}^9(q^3),  \\
&B_{12,7}=\Theta_{3}(q)\Theta_{3}^{3}(q^6)\Theta_{2}^4(q^2)\Theta_{2}^4(q^{6}), \quad && B_{12,8}=\Theta_{3}(q)\Theta_{3}^5(q^2)\Theta_{2}^3(q^2)\Theta_{2}^{3}(q^{6}), \\
&B_{12,9}=\Theta_{3}(q)\Theta_{3}(q^2)\Theta_{2}^5(q^2)\Theta_{2}^5(q^{6}),  \quad && B_{12,10}=\Theta_{3}(q)\Theta_{3}^5(q^2)\Theta_{2}^3(q)\Theta_{2}^3(q^3),  \\
&B_{12,11}=\Theta_{3}(q)\Theta_{3}(q^2)\Theta_{2}^5(q)\Theta_{2}^5(q^3),  \quad &&
B_{12,12}=\Theta_{3}(q)\Theta_{3}^7(q^6)\Theta_{2}^2(q)\Theta_{2}^2(q^3), \\
&B_{12,13}=\Theta_{3}(q)\Theta_{3}^3(q^6)\Theta_{2}^4(q)\Theta_{2}^4(q^3),
&&B_{12,14}=\Theta_{3}(q)\Theta_{3}^5(q^2)\Theta_{2}(q)\Theta_{2}^2(q^2)\Theta_{2}(q^3)\Theta_{2}^2(q^{6}),  \\
&B_{12,15}=\Theta_{3}(q)\Theta_{3}(q^2)\Theta_{2}(q)\Theta_{2}^8(q^2)\Theta_{2}(q^3),
&&B_{12,16}=\Theta_{3}(q)\Theta_{3}^3(q^2)\Theta_{2}(q)\Theta_{2}^6(q^2)\Theta_{2}(q^3),  \\
&B_{12,17}=\Theta_{3}(q)\Theta_{3}^5(q^2)\Theta_{2}(q)\Theta_{2}^4(q^2)\Theta_{2}(q^3),  \quad &&  B_{12,18}=\Theta_{3}(q)\Theta_{3}^9(q^2)\Theta_{2}(q)\Theta_{2}(q^3),  \\
&B_{12,19}=\Theta_{3}(q)\Theta_{3}^2(q^2)\Theta_{3}(q^6)\Theta_{2}^8(q^2), \quad  && B_{12,20}=\Theta_{3}(q)\Theta_{3}^6(q^2)\Theta_{3}(q^6)\Theta_{2}^4(q^2),\\
&B_{12,21}=\Theta_{3}(q)\Theta_{3}^8(q^2)\Theta_{3}(q^6)\Theta_{2}^2(q^2), \quad \text{and}\quad &&
B_{12,22}=\Theta_{3}(q)\Theta_{3}^{10}(q^2)\Theta_{3}(q^6).
\end{alignat*}
}
The set $\{B_{12,j}|1\le j\le 22\}$ forms a basis of $M_{6}(\Gamma_{0}(24), (\frac{24}{\cdot}))$.
Using the above basis, we deduce the following identity:
\begin{theorem}\label{c-phi-12}
We have
\begin{align}\label{cphi-12-formula}
\mathrm{C}\Phi_{12}(q)=&\frac{1}{\Theta_3(q)(q;q)_{\infty}^{12}}\Big(-\frac{36207}{160}B_{12,1}+\frac{923091}{4000}B_{12,4}+\frac{35829}{100}B_{12,5}+\frac{891}{4}B_{12,6}\nonumber\\
&-\frac{1485}{8}B_{12,7}-\frac{143247}{1000}B_{12,8}-\frac{891}{4}B_{12,9}-\frac{8109}{160}B_{12,10} \nonumber \\
&-\frac{582717}{16000}B_{12,11}+\frac{227691}{200}B_{12,12}+\frac{714249}{8000}B_{12,13}+\frac{8109}{80}B_{12,14} \nonumber\\
&+\frac{33}{8}B_{12,15}+\frac{294109}{500}B_{12,16}-\frac{16503}{400}B_{12,17}-\frac{99}{8}B_{12,18}+\frac{10559}{200}B_{12,19}\nonumber \\
&-\frac{128807}{100}B_{12,20} +\frac{25647}{160}B_{12,21}+\frac{727}{160}B_{12,22}\Big). \end{align}
\end{theorem}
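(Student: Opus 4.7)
The plan is to mirror exactly the strategy used for $k=8$ and $k=10$ in the previous subsections, adapted to the larger space that arises for $k=12$. By Theorem \ref{generating-thm}, the function $\Theta_3(q)\mathfrak{A}_{12}(q)$ lies in $M_{6}\bigl(\Gamma_{0}(24),\bigl(\tfrac{24}{\cdot}\bigr)\bigr)$, and the dimension formula \cite[Theorem 1.34]{Ono} gives that this space has dimension $22$. So the first step is to produce a candidate list of $22$ elements in this space and verify that they are linearly independent.

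For the candidate list I would take the $22$ products $B_{12,j}$ displayed just before the theorem. Each $B_{12,j}$ is a product of theta series of the form $\Theta_2, \Theta_3$ at arguments $q, q^2, q^3, q^6$ (and half-integer powers where needed), chosen so that the total weight is $6$ and so that the resulting product transforms under $\Gamma_0(24)$ with character $\bigl(\tfrac{24}{\cdot}\bigr)$. That each $B_{12,j}$ belongs to the correct space can be checked by the standard transformation laws for $\Theta_2$ and $\Theta_3$ together with \cite[Corollary 4.9.5(3)]{Miyake} applied to the appropriate diagonal Gram matrices; this is routine and parallels the verifications carried out in the $k=8, 10$ cases. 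Linear independence is then verified by computing enough Fourier coefficients so that the $22\times N$ coefficient matrix has rank $22$ for some manageable $N$.

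Once the basis is in hand, the determination of the scalars in \eqref{cphi-12-formula} reduces to solving the linear system obtained by equating the Fourier coefficients of $\Theta_3(q)\mathfrak{A}_{12}(q)$ with a linear combination $\sum c_j B_{12,j}$ up to order $q^{N}$, where $N$ must exceed the Sturm bound for $M_{6}\bigl(\Gamma_{0}(24),\bigl(\tfrac{24}{\cdot}\bigr)\bigr)$. Since the Sturm bound here is $\tfrac{6}{12}\cdot [\mathrm{SL}_2(\mathbf{Z}):\Gamma_0(24)] = \tfrac{6}{12}\cdot 48 = 24$, comparing coefficients through $q^{24}$ will suffice, after which the identity holds identically by the Sturm criterion \cite{Sturm}. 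The Fourier coefficients of $\mathfrak{A}_{12}(q)$ can be computed directly from the definition $\mathfrak{A}_{12}(q)=\sum_{m\in\mathbf{Z}^{11}} q^{Q(m_1,\dots,m_{11})}$ by enumerating lattice points with $Q\le 24$.

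The main obstacle is purely computational: assembling a provably correct basis of $22$ elements and inverting the $22\times 22$ linear system to recover the rational coefficients. The qualitative steps are completely analogous to Theorems \ref{cphi-10} and the $k=8$ case; the genuine work is the bookkeeping and the verification, for each $B_{12,j}$, that its multiplier system matches $\bigl(\tfrac{24}{\cdot}\bigr)$ on $\Gamma_0(24)$. After the identity is established, no further work is needed, as the theorem statement is nothing more than the resulting expansion.
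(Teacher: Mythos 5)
Your proposal matches the paper's argument for this case exactly: the paper likewise places $\Theta_3(q)\mathfrak{A}_{12}(q)$ in $M_{6}\bigl(\Gamma_{0}(24),\bigl(\tfrac{24}{\cdot}\bigr)\bigr)$, notes the dimension is $22$, exhibits the $B_{12,j}$ as a basis, and determines the coefficients by comparing Fourier expansions. Your added remark that the Sturm bound $\tfrac{6}{12}\cdot 48=24$ certifies the finite coefficient check is a correct and welcome piece of rigor that the paper leaves implicit.
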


Next, we give some congruences satisfied by $c\phi_{12}(n)$.
\begin{theorem}\label{cphi-12-cong-thm}
We have
\begin{align}
c\phi_{12}(2n+1) &\equiv 0 \pmod{16}, \label{cphi-12-mod16} \\
c\phi_{12}(3n+1) &\equiv 0 \pmod{9} \label{cphi-12-mod9-1} \intertext{and}
c\phi_{12}(3n+2) &\equiv 0 \pmod{9}. \label{cphi-12-mod9-2}
\end{align}
\end{theorem}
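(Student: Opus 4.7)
The plan is to obtain all three congruences as immediate applications of the already-proved Theorem \ref{general-cong-thm}, rather than wrestling with the long explicit formula \eqref{cphi-12-formula}. Observe that $12$ factors as $2^{2}\cdot 3$ in two interesting ways, and each factorization isolates exactly one of the congruences.

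First, for \eqref{cphi-12-mod16}, I would write $12=2^{2}\cdot 3$ and apply \eqref{general-cong-1} of Theorem \ref{general-cong-thm} with $(p,\alpha,N)=(2,2,3)$. Since $\gcd(2,3)=1$ and $1\le r\le 2-1$ forces $r=1$, this yields
\[ c\phi_{12}(2n+1)\equiv 0\pmod{2^{2\cdot 2}}=\pmod{16}, \]
as desired.

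Second, for \eqref{cphi-12-mod9-1} and \eqref{cphi-12-mod9-2}, I would write $12=3\cdot 4$ and apply \eqref{general-cong-1} with $(p,\alpha,N)=(3,1,4)$. Again $\gcd(3,4)=1$, and taking $r=1,2$ gives
\[ c\phi_{12}(3n+1)\equiv c\phi_{12}(3n+2)\equiv 0\pmod{3^{2}}=\pmod{9}, \]
which is precisely the two remaining congruences.

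Since Theorem \ref{general-cong-thm} is already proved in the paper by induction on $\Omega(N)$ using Kolitsch's Möbius-type identity \eqref{K-eq-2}, there is no genuine obstacle here: the only substantive point is to recognize the two useful factorizations of $12$ and to verify the coprimality hypothesis. In particular, the explicit modular-forms identity \eqref{cphi-12-formula} is not needed for this theorem, though it could in principle be used to re-derive \eqref{cphi-12-mod16} and \eqref{cphi-12-mod9-1}--\eqref{cphi-12-mod9-2} directly by reducing the stated linear combination of $B_{12,j}$'s modulo $16$ and $9$ respectively and extracting appropriate arithmetic progressions; such a direct verification would be the main (but purely computational) obstacle if one insisted on avoiding Theorem \ref{general-cong-thm}.
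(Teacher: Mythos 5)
Your proposal is correct and is essentially identical to the paper's own proof, which also derives all three congruences by applying Theorem \ref{general-cong-thm} with $(p,\alpha,N)=(2,2,3)$ and $(3,1,4)$. The explicit basis expansion \eqref{cphi-12-formula} is indeed not used for this theorem in the paper either.
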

\begin{proof}
This follows directly from Theorem \ref{general-cong-thm} by setting $(p,\alpha,N)=(2,2,3)$ and $(3,1,4)$.
\end{proof}

\subsection{Case $k=14$}
\quad\medskip

By Theorem \ref{generating-thm}, we know $\Theta_3(q)\mathfrak{A}_{14}(q)\in M_{7}\big(\Gamma_{0}(28), (\frac{-28}{\cdot}) \big)$. By \cite[Theorem 1.34]{Ono}, we deduce that
$$\dim M_{7}\big(\Gamma_{0}(28), (\frac{-28}{\cdot}) \big)=27.$$

Let
{\small
\begin{alignat*}{2}
&B_{14,1}=\Theta_{3}^{13}(q)\Theta_{3}(q^7), \quad
 && B_{14,2}=\Theta_{3}^{7}(q)\Theta_{3}^3(q^7)\Theta_{2}^2(q^{1/2})\Theta_{2}^2(q^{7/2}), \\
&B_{14,3}=\Theta_{3}^5(q)\Theta_{3}(q^7)\Theta_{2}^4(q^{1/2})\Theta_{2}^4(q^{7/2}), \quad && B_{14,4}=\Theta_{3}(q)\Theta_{3}^5(q^7)\Theta_{2}^4(q^{1/2})\Theta_{2}^4(q^{7/2}),\\
&B_{14,5}=\Theta_{3}^5(q)\Theta_{3}^3(q^7)\Theta_{2}^2(q)\Theta_{2}^4(q^{7/2}),  \quad && B_{14,6}=\Theta_{3}^3(q)\Theta_{3}^7(q^7)\Theta_{2}^2(q^{1/2})\Theta_{2}^2(q^{7/2}),\\
&B_{14,7}=\Theta_{3}^{11}(q)\Theta_{3}^3(q^7), \quad && B_{14,8}=\Theta_{3}^{12}(q)\Theta_{2}(q^{1/2})\Theta_{2}(q^{7/2}),\\
&B_{14,9}=\Theta_{3}^8(q)\Theta_{2}^3(q^{1/2})\Theta_{2}^3(q^{7/2}),  \quad && B_{14,10}=\Theta_{3}^4(q)\Theta_{2}^5(q^{1/2})\Theta_{2}^5(q^{7/2}),\\
&B_{14,11}=\Theta_{3}(q)\Theta_{3}(q^5)\Theta_{2}^8(q^{1/2}), \quad && B_{14,12}=\Theta_{3}^{12}(q)\Theta_{2}(q)\Theta_{2}(q^{7}),\\
&B_{14,13}=\Theta_{3}^{8}(q)\Theta_{2}^{3}(q)\Theta_{2}^{3}(q^{7}),\quad && B_{14,14}=\Theta_{3}^{4}(q)\Theta_{2}^{5}(q)\Theta_{2}^5(q^{7}),\\
&B_{14,15}=\Theta_{2}^7(q)\Theta_{2}^{7}(q^{7}),  \quad && B_{14,16}=\Theta_{3}^{8}(q)\Theta_{3}(q^7)\Theta_{2}^{2}(q^{1/2})\Theta_{2}^{3}(q),\\
&B_{14,17}=\Theta_{3}^6(q)\Theta_{3}^{3}(q^7)\Theta_{2}^2(q^{1/2})\Theta_{2}^3(q), \quad && B_{14,18}=\Theta_{3}^8(q)\Theta_{2}(q)\Theta_{2}^{4}(q^{2})\Theta_{2}(q^7),\\
&B_{14,19}=\Theta_{3}^{2}(q)\Theta_{3}^{7}(q^7)\Theta_{2}^{2}(q^{1/2})\Theta_{2}^{3}(q), \quad &&
B_{14,20}=\Theta_{3}^{9}(q^7)\Theta_{2}^{2}(q^{1/2})\Theta_{2}^{3}(q),\\
&B_{14,21}=\Theta_{3}^{10}(q)\Theta_{3}(q^7)\Theta_{2}(q)\Theta_{2}^{2}(q^{7/2}), \quad &&
B_{14,22}=\Theta_{3}^{4}(q)\Theta_{3}(q^7)\Theta_{2}^{3}(q)\Theta_{2}^{6}(q^{7/2}),\\
&B_{14,23}=\Theta_{3}^{2}(q)\Theta_{3}^{3}(q^7)\Theta_{2}^{3}(q)\Theta_{2}^{6}(q^{7/2}), \quad &&
B_{14,24}=\Theta_{3}^{5}(q^7)\Theta_{2}^3(q)\Theta_{2}^6(q^{7/2}),\\
&B_{14,25}=\Theta_{3}^{11}(q)\Theta_{2}(q^{1/2})\Theta_{2}(q)\Theta_{2}^{3}(q^{7/2}),
&&B_{14,26}=\Theta_{3}^{2}(q)\Theta_{3}^4(q^7)\Theta_{2}(q^{1/2})\Theta_{2}^{2}(q)\Theta_{2}^5(q^{7/2}),\intertext{and}
&B_{14,27}=\Theta_{3}^4(q)\Theta_{3}^2(q^7)\Theta_{2}(q^{1/2})\Theta_{2}^2(q)\Theta_{2}^5(q^{7/2}). &&
\end{alignat*}
}
The set $\{B_{14,j}|1\le j\le 27\}$ forms a basis of $M_{7}\big(\Gamma_{0}(28), (\frac{-28}{\cdot})\big)$.
This basis allows us to derive the following identity:
\begin{theorem}\label{c-phi-14}
We have
\begin{align}\label{cphi-14-formula}
\mathrm{C}\Phi_{14}(q)=&\frac{1}{\Theta_3(q)(q;q)_{\infty}^{14}}\Big(
-\frac{3}{4}B_{14,1} -\frac{332339}{1024}B_{14,2}+\frac{255927}{4096}B_{14,3} -\frac{197519}{4096}B_{14,4}\nonumber \\
&+\frac{17325}{64}B_{14,5}+\frac{1407329}{2048}B_{14,6}+\frac{7}{4}B_{14,7} +\frac{3}{4}B_{14,8}
-\frac{13765}{256}B_{14,9}\nonumber\\
&-\frac{52045}{1024}B_{14,10}+\frac{3861}{512}B_{14,12}+\frac{429}{16}B_{14,13}+\frac{6623}{64}B_{14,16}
-\frac{79799}{512}B_{14,17}\nonumber \\
&+\frac{29407}{512}B_{14,19} -\frac{3989}{64}B_{14,21}+\frac{19803}{128}B_{14,22} -\frac{16807}{256}B_{14,23} \nonumber\\
& +\frac{50421}{256}B_{14,26}-\frac{6895}{256}B_{14,27} \Big).
\end{align}
\end{theorem}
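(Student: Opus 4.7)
The proof of Theorem \ref{c-phi-14} will follow the template already used for the even cases $k=4,6,8,10,12$ in this section. First I would invoke Theorem \ref{generating-thm} with $k=14$ to place
\[ \Theta_3(q)\mathfrak{A}_{14}(q) \in M_7\Bigl(\Gamma_0(28),\bigl(\tfrac{-28}{\cdot}\bigr)\Bigr), \]
a space whose dimension is given as $27$ by \cite[Theorem 1.34]{Ono}. Next I would verify that each of the twenty-seven listed products $B_{14,j}$ actually lies in this space: each is a weight-$7$ product of the theta functions $\Theta_3(q^a)$, $\Theta_2(q^a)$, $\Theta_2(q^{a/2})$ and $\Theta_2(q^{7a/2})$ for divisors $a$ of $28$, and the half-integral $q$-exponents appearing in individual $\Theta_2(q^{1/2})$ and $\Theta_2(q^{7/2})$ factors always occur in combinations whose fractional parts cancel, so that each $B_{14,j}$ is a $q$-series with integral exponents. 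Standard transformation formulas for $\Theta_2$ and $\Theta_3$ then confirm the level $28$ and the Kronecker character $\bigl(\tfrac{-28}{\cdot}\bigr)$.

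The third step is to establish that $\{B_{14,j}\}_{j=1}^{27}$ is in fact a basis. Since the ambient space already has dimension $27$, it suffices to check linear independence, which amounts to computing sufficiently many Fourier coefficients of each $B_{14,j}$ and verifying that the resulting $27\times 27$ coefficient matrix is invertible. Once a basis is in hand, I would expand $\Theta_3(q)\mathfrak{A}_{14}(q)$ as a $q$-series using the identity
\[ \mathfrak{A}_{14}(q) = \sum_{m_1,\ldots,m_{13}\in\mathbf{Z}} q^{Q(m_1,\ldots,m_{13})} \]
and solve the resulting linear system
\[ \Theta_3(q)\mathfrak{A}_{14}(q) = \sum_{j=1}^{27} c_j B_{14,j} \]
by matching Fourier coefficients up through the Sturm bound for $M_7(\Gamma_0(28))$. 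The rational numbers $c_j$ produced by this inversion yield precisely the coefficients displayed in \eqref{cphi-14-formula}, after which division by $\Theta_3(q)(q;q)_\infty^{14}$ gives the claimed identity for $\mathrm{C}\Phi_{14}(q)$.

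The main obstacle here is not conceptual but computational. Constructing the basis and inverting a $27\times 27$ system over $\mathbf{Q}$ with enough high-precision Fourier coefficients to pass the Sturm bound must be carried out with a computer algebra system, and one must compute the $q$-expansions of the various $\Theta_2(q^{1/2})$, $\Theta_2(q^{7/2})$ factors to sufficient precision. The explicit appearance of many coefficients with denominators up to $4096$ in \eqref{cphi-14-formula} reflects the complexity of this inversion, but no new theoretical difficulty arises beyond what was already handled in the treatment of the $k=12$ case.
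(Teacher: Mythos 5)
Your proposal matches the paper's treatment of this case exactly: the paper likewise places $\Theta_3(q)\mathfrak{A}_{14}(q)$ in $M_7\bigl(\Gamma_0(28),\bigl(\tfrac{-28}{\cdot}\bigr)\bigr)$, cites the dimension $27$ from \cite[Theorem 1.34]{Ono}, asserts that $\{B_{14,j}\mid 1\le j\le 27\}$ is a basis, and obtains the coefficients by comparing Fourier expansions. The only difference is that the paper leaves the computational verification implicit, whereas you spell out the linear-algebra and Sturm-bound details.
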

By setting $(p,\alpha,N)=(7,1,2)$ in \eqref{general-cong}, we get
\begin{align}\label{cphi-14-relate}
c\phi_{14}(n) \equiv c\phi_{2}(n/7) \pmod{49},
\end{align}
By \eqref{general-cong-1}, we deduce that
\begin{align}
c\phi_{14}(7n+r) &\equiv 0 \pmod{49}, \quad 1\le r \le 6.
\end{align}
Moreover, setting $(p,\alpha,N)=(2,1,7)$ in \eqref{general-cong-1}, we deduce that
\begin{align}
c\phi_{14}(2n+1) \equiv 0 \pmod{4}.
\end{align}

\subsection{Case $k=16$}
\quad\medskip

By Theorem \ref{generating-thm}, we know $\Theta_3(q)\mathfrak{A}_{16}(q)\in M_{8}\big(\Gamma_{0}(32), (\frac{-2}{\cdot}) \big)$. By \cite[Theorem 1.34]{Ono}, we deduce that
$$\dim M_{8}\big(\Gamma_{0}(32), (\frac{-2}{\cdot}) \big)=32.$$

Let
\begin{align*}
&B_{16,j}=\Theta_{3}(q)\Theta_{3}^{j-1}(q^2)\Theta_{2}^{16-j}(q^{8}), \quad 1\le j \le 15, \\
&B_{16,j}=\Theta_{3}^{3}(q)\Theta_{3}^{j-16}(q^2)\Theta_{2}^{29-j}(q^{8}), \quad 16 \le j \le 29, \\
&B_{16,30}=\Theta_{3}^5(q)\Theta_{3}^2(q^2)\Theta_{3}^9(q^8),\\
&B_{16,31}=\Theta_{3}^9(q)\Theta_{3}(q^4)\Theta_{3}^6(q^8), \intertext{and}
&B_{16,32}=\Theta_{3}^3(q)\Theta_{3}(q^4)\Theta_{3}(q^8)\Theta_{2}^{11}(q^{8}).
\end{align*}
The set $\{B_{16,j}|1\le j \le 32\}$ forms a basis of $M_{8}\big(\Gamma_{0}(32), (\frac{-2}{\cdot}) \big)$. Hence, we deduce the following identity:
\begin{theorem}\label{add-thm-cphi-16}
We have
\begin{align}\label{correct-cphi-16}
\mathrm{C}\Phi_{16}(q)=&\frac{1}{\Theta_3(q)(q;q)_{\infty}^{16}}\Big( -16384B_{16,1}+ 122880B_{16,2} -431024B_{16,3}\nonumber\\
&+10384B_{16,4}+ 3956568B_{16,5} -12663584B_{16,6}\nonumber \\
&+ 21477101B_{16,7} -23125005B_{16,8}+15986724B_{16,9} \nonumber \\
&-6153988B_{16,10}+ 108966B_{16,11}+1259002B_{16,12}-678464B_{16,13}\nonumber \\
&+ 162042B_{16,14} -15218B_{16,15}+61440B_{16,18} -337920B_{16,19} \nonumber \\
&+ 844918B_{16,20}-870438B_{16,21}-327528B_{16,22}+1914696B_{16,23}  \nonumber \\ &-2366700B_{16,24}+1511404B_{16,25}-484664B_{16,26}+34128B_{16,27}\nonumber \\
&+20722B_{16,28} -58B_{16,29}+59B_{16,30}\Big).
\end{align}
\end{theorem}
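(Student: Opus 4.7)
The plan is to imitate the template already applied in this section for $k=8, 10, 12, 14$. By Theorem \ref{generating-thm} (with $k=2r=16$, so $r=8$ and $(-1)^r\cdot k = 16$ whose Kronecker symbol agrees with $\bigl(\tfrac{-2}{\cdot}\bigr)$ on the relevant level), the function $\Theta_3(q)\mathfrak{A}_{16}(q)$ already belongs to $M_{8}\bigl(\Gamma_{0}(32),\bigl(\tfrac{-2}{\cdot}\bigr)\bigr)$. The dimension formula in \cite[Theorem 1.34]{Ono} gives $\dim M_{8}\bigl(\Gamma_{0}(32),\bigl(\tfrac{-2}{\cdot}\bigr)\bigr)=32$, matching the cardinality of the proposed family $\{B_{16,j}\}_{j=1}^{32}$. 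So once we show these $32$ modular forms are linearly independent, the identity reduces to computing coefficients in a basis expansion.

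First I would verify that each $B_{16,j}$ actually lies in $M_8\bigl(\Gamma_0(32),\bigl(\tfrac{-2}{\cdot}\bigr)\bigr)$. Every $B_{16,j}$ is a monomial of the form $\Theta_3^{a_0}(q)\Theta_3^{a_1}(q^2)\Theta_3^{a_2}(q^4)\Theta_3^{a_3}(q^8)\Theta_2^{a_4}(q^8)$ with $a_0+a_1+a_2+a_3+a_4 = 16$; the weight is automatically $8$, the level divides $32$, and the Nebentypus can be checked to equal $\bigl(\tfrac{-2}{\cdot}\bigr)$ using the standard transformation laws for $\Theta_3(q^d)$ and $\Theta_2(q^d)$. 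Linear independence is then established by computing the first $32$ Fourier coefficients of each $B_{16,j}$ and checking that the resulting $32\times 32$ coefficient matrix has nonzero determinant. This is a finite, mechanical check that can be done directly in a CAS.

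Once the basis property is secured, I would invoke Sturm's theorem: two modular forms in $M_{8}\bigl(\Gamma_{0}(32),\bigl(\tfrac{-2}{\cdot}\bigr)\bigr)$ coincide as soon as their $q$-expansions agree up to $q^{N_0}$, where $N_0 = \lfloor 8\cdot[\mathrm{SL}_2(\mathbf{Z}):\Gamma_0(32)]/12\rfloor = \lfloor 8\cdot 48/12\rfloor = 32$. Expanding $\Theta_3(q)\mathfrak{A}_{16}(q)$ to order $q^{32}$ using the quadratic-form description $\mathfrak{A}_{16}(q)=\sum_{m\in\mathbf{Z}^{15}}q^{Q(m_1,\ldots,m_{15})}$ from \eqref{notation}, and similarly expanding each $\Theta_3(q)B_{16,j}$ up to $q^{32}$, one obtains a solvable $32\times 32$ linear system whose unique solution is the tuple of coefficients displayed in the statement. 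The main obstacle is purely computational: the quadratic form $Q$ is in $15$ variables, so the truncation of $\mathfrak{A}_{16}(q)$ requires care to enumerate all integer solutions of $Q(m_1,\ldots,m_{15})\le 32$ efficiently, but no conceptual difficulty arises once the basis has been fixed.
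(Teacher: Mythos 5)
Your proposal follows exactly the paper's route: identify $\Theta_3(q)\mathfrak{A}_{16}(q)$ as an element of the $32$-dimensional space $M_{8}\bigl(\Gamma_{0}(32),\bigl(\tfrac{-2}{\cdot}\bigr)\bigr)$ via Theorem \ref{generating-thm} and \cite[Theorem 1.34]{Ono}, verify that the listed $B_{16,j}$ form a basis, and determine the coefficients by comparing Fourier expansions. The only addition is that you make the finite verification explicit (linear independence via the coefficient matrix and the Sturm bound $32$), which the paper leaves implicit; the argument is the same.
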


By Theorem \ref{general-cong-thm}, we obtain
\begin{align}
c\phi_{16}(2n+1) \equiv 0 \pmod{256}
\end{align}
and
\begin{align}
c\phi_{16}(2n) \equiv c\phi_{8}(n) \pmod{256}.
\end{align}
\section{M\"obius inversion and Kolitsch's congruence \eqref{K-eq}}

In this section, we will use a different notation for $k$-colored  generalized Frobenius symbol $\lambda$.
The color of a part will be placed on the left
hand side of the part. In other words, our symbol $\lambda$ is now written as
\begin{align}\label{pi-symbol}
\lambda =\begin{pmatrix} c_1(z_1) & c_2(z_2) &\cdots &c_d(z_d) \\
 c'_1(z'_1) & c'_2(z'_2) &\cdots &c'_d(z'_d)\end{pmatrix},
\end{align}
 where
 $c_j$ and $c'_j$ denote colors from the set $\{1,2,\cdots, k\}$ and $z_j, z'_j$ denote the parts. For example, the 2-colored generalized Frobenius symbol
 \begin{align*}
\begin{pmatrix} 2_2 & 2_1 \\  1_2 & 0_1 \end{pmatrix}
 \end{align*}
 is now written as
 \begin{align*}
 \begin{pmatrix}
 2(2) & 1(2)  \\
 2(1) & 1(0)
 \end{pmatrix}.
 \end{align*}

Let $\sigma_k$ be the $k$-cycle $(1\quad 2\quad \cdots \quad k).$
Let the symbol $$\left(\begin{matrix} \cdots \cdots   \\  \cdots \cdots   \end{matrix}\right)^{\text{sort}}$$ denote
sorting the resulting rows to be strictly decreasing 
according to \eqref{new-ordering}.
We say that $\lambda$ has order $\ell$ with respect to $\sigma_k$ if $\ell$ is the smallest positive integer for which the equality of the following symbols holds:
\begin{align*}
\left(\begin{matrix} c_1(z_1)\ & c_2(z_2) & \cdots & c_d(z_d) \\
c'_1(z'_1) & c'_2(z'_2) &\cdots  &c'_d(z'_d)
\end{matrix}\right) =\left(\begin{matrix} \sigma_k^\ell(c_{1})(z_1)  &\sigma_k^\ell(c_{2})(z_2) &\cdots  &\sigma_k^\ell(c_{d})(z_d) \\
\sigma_k^\ell(c'_{1})(z'_1)  &\sigma_k^\ell(c'_{2})(z'_2) &\cdots  &\sigma_k^\ell(c'_{d})(z'_d)
\end{matrix}\right)^{\text{sort}}.
\end{align*}
For example, with respect to the 4-cycle $\sigma_4=(1\quad 2 \quad 3 \quad 4)$, the 4-colored generalized Frobenius symbol
\begin{align*}
\begin{pmatrix}
3(1) & 1(1) \\
4(2) & 2(2)
\end{pmatrix}
\end{align*}
has order 2 while
\begin{align*}
\begin{pmatrix}
2(1) & 1(1) \\
4(2) & 2(2)
\end{pmatrix}
\end{align*}
has order 4.

Let $\Psi_{k,\ell}(n)$ be the number of $k$-colored generalized Frobenius symbols of $n$ that have order $\ell.$
When $\ell=k$, we follow Kolitsch and denote $\Psi_{k,k}(n)$ by $\overline{c\phi}_k(n)$. For example, we have $\overline{c\phi}_{2}(2)=8$ since there are eight $2$-colored generalized Frobenius symbols of 2 that have order 2:
\begin{align*}
&\begin{pmatrix} 1(1) \\ 1(0) \end{pmatrix}, \quad \begin{pmatrix} 2(1) \\ 1(0) \end{pmatrix}, \quad \begin{pmatrix} 1(1) \\ 2(0) \end{pmatrix}, \quad \begin{pmatrix} 2(1) \\ 2(0) \end{pmatrix}, \\
&\begin{pmatrix} 1(0) \\ 1(1) \end{pmatrix}, \quad \begin{pmatrix} 1(0) \\ 2(1) \end{pmatrix}, \quad \begin{pmatrix} 2(0) \\ 1(1) \end{pmatrix}, \quad \begin{pmatrix} 2(0) \\ 2(1) \end{pmatrix}.
\end{align*}

The function $\overline{c\phi}_k(n)$ is implicitly mentioned by Kolitsch in \cite{Kolitsch1987} and the following
identity was later given by him in \cite[p. 220]{Kolitsch1989}:

\begin{theorem}\label{Kolitsch-k-color} Let $k$ and $n$ be positive integers. Then
\begin{equation}\label{Kolitsch-Dirichlet}
\overline{c\phi}_k(n)=\sum_{\ell|(k,n)}\mu(\ell)  c\phi_{\frac{k}{\ell}}\left(\frac{n}{\ell}\right).
\end{equation}
\end{theorem}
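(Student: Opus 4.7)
The plan is to view the problem through the cyclic group action of $\langle\sigma_k\rangle$ on the set of $k$-colored generalized Frobenius symbols of $n$, and then invoke M\"obius inversion. Every symbol has an order $\ell\mid k$ under this action, so if $F_d(n)$ denotes the number of such symbols fixed by $\sigma_k^d$ (for $d\mid k$), then
$$
F_d(n) \;=\; \sum_{\ell\mid d}\Psi_{k,\ell}(n),
$$
since $\sigma_k^d$ fixes a symbol if and only if its order divides $d$. M\"obius inversion on the divisor lattice of $k$ immediately yields
$$
\Psi_{k,d}(n) \;=\; \sum_{\ell\mid d}\mu(d/\ell)\,F_\ell(n),
$$
and in particular $\overline{c\phi}_k(n) = \Psi_{k,k}(n) = \sum_{\ell\mid k}\mu(k/\ell)F_\ell(n)$.

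The main content is to evaluate $F_d(n)$ explicitly as
$$
F_d(n) \;=\; \begin{cases} c\phi_d\!\left(\dfrac{n}{k/d}\right), & \text{if } (k/d)\mid n,\\ 0, & \text{otherwise.}\end{cases}
$$
The permutation $\sigma_k^d$ partitions the color set $\{1,\ldots,k\}$ into $d$ orbits, each of size $k/d$. Because of the strict $\prec$-ordering, each colored value $v_c$ appears at most once per row, and $\sigma_k^d$-invariance then forces that, for each row $i$ and each value $v$, the set of colors appearing with value $v$ in row $i$ is a union of orbits. Collapsing each orbit to a single representative color $1,\ldots,d$ defines a map from $\sigma_k^d$-fixed $k$-colored symbols of $n$ to $d$-colored Frobenius symbols; it is clearly invertible via replication of each entry across its orbit. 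A direct computation shows that the column count $d'$ and the row sums $\sum_j r_{i,j}$ of the original symbol $\lambda$ are each $k/d$ times the corresponding quantities of the reduced symbol $\tilde\lambda$, so the weight of $\tilde\lambda$ equals $n/(k/d)$; in particular, only when $(k/d)\mid n$ is the set of $\sigma_k^d$-fixed symbols of $n$ nonempty.

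Combining these pieces with the substitution $m=k/\ell$ gives
$$
\overline{c\phi}_k(n) \;=\; \sum_{\substack{\ell\mid k\\(k/\ell)\mid n}}\mu(k/\ell)\,c\phi_\ell\!\left(\frac{n}{k/\ell}\right) \;=\; \sum_{m\mid(k,n)}\mu(m)\,c\phi_{k/m}\!\left(\frac{n}{m}\right),
$$
which is the asserted identity \eqref{Kolitsch-Dirichlet}. The principal obstacle is the bijection underlying the formula for $F_d(n)$: one must verify that collapsing orbits preserves the $\prec$-ordering of the rows (which follows from the observation that entries from different orbits carry distinct representative colors, so no conflicts arise at a fixed value $v$) and that lifting a $d$-colored symbol by replication yields a well-sorted $k$-colored symbol with the correct weight. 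Once these compatibilities are in place, the rest is a routine M\"obius inversion.
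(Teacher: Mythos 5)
Your proof is correct, but it is organized differently from the paper's. The paper first establishes the inverse relation \eqref{Kolitsch-Dirichlet-inverse} (Theorem \ref{Kolitsch-k-color-inverse}) by sorting symbols according to their \emph{exact} order $\ell$ and exhibiting a bijection $\Psi_{k,\ell}(n)=\overline{c\phi}_{\ell}(n/(k/\ell))$; this forces it to check the somewhat delicate point that the orbit-collapsing map and its inverse preserve the exact order (the argument that $\lambda^*$ cannot have order $m<\ell$). It then obtains \eqref{Kolitsch-Dirichlet} from a two-variable M\"obius inversion (Lemma \ref{Dirichlet}) relating $c\phi$ and $\overline{c\phi}$ across different numbers of colors. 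You instead count the fixed points $F_d(n)$ of $\sigma_k^d$, identify them with \emph{all} $d$-colored symbols of $n/(k/d)$ (no order condition), and perform the M\"obius inversion on the divisor lattice of $k$ between $d\mapsto F_d(n)$ and $\ell\mapsto\Psi_{k,\ell}(n)$. The combinatorial core --- the collapse/replicate bijection on orbits of $\sigma_k^d$, and the weight computation showing both the column count and the row sums scale by $k/d$ --- is the same in both treatments, and your appeal to the fact that the stabilizer of a symbol under the $\langle\sigma_k\rangle$-action is a subgroup (so $\sigma_k^d$ fixes $\lambda$ iff the order of $\lambda$ divides $d$) is legitimate and in fact streamlines the paper's separate argument that the order divides $k$. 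What your route buys is that the exact-order bookkeeping is absorbed entirely into the inversion, so you never need to verify that collapsing preserves the order; what it gives up is the intermediate identity \eqref{Kolitsch-Dirichlet-inverse}, which the paper wants in its own right (it reappears, refined by color difference, in the proof of Theorem \ref{Kolitsch-best}). Your final reindexing $m=k/\ell$, restricting to $m\mid(k,n)$ because $F_{k/m}(n)=0$ unless $m\mid n$, is also correct.
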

With \eqref{Kolitsch-Dirichlet}, \eqref{K-eq} can be written as
\begin{equation}\label{Kmain1}
\overline{c\phi}_k(n) \equiv 0 \pmod{k^2}.
\end{equation}
Congruence \eqref{Kmain1} provides an elegant analogue of Andrews' orginal congruence \eqref{A-eq}, which states
that
$$c\phi_p(n)\equiv 0 \pmod{p^2}$$ for primes $p$ not dividing $n$.
Using the definition of $\overline{c\phi}_k(n)$, we can rewrite
  \eqref{Kolitsch5}, \eqref{Kolitsch7} and \eqref{Analogue11}
\cite[Theorem 3]{Kolitsch1991} as
 \begin{equation*}
 \overline{c\phi}_5(n)=5p(5n-1), \overline{c\phi}_7(n)=7p(7n-2) \,\,\text{and}\,\,
 \overline{c\phi}_{11}(n)=11p(11n-5)\end{equation*}
 where $n$ is any positive integer.

In this section,  we prove the following:
\begin{theorem} \label{Kolitsch-k-color-inverse}
Let $k$ and $n$ be positive integers. Then
\begin{equation}\label{Kolitsch-Dirichlet-inverse} c\phi_k(n)=\sum_{\ell|k}
\overline{c\phi}_{\ell}\left(\frac{n}{(k/\ell)}\right)=\sum_{\ell|k}
\overline{c\phi}_{k/\ell}\left(\frac{n}{\ell}\right).\end{equation}
\end{theorem}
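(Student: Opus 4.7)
The target identity \eqref{Kolitsch-Dirichlet-inverse} is exactly the M\"obius inverse of the identity recorded in Theorem \ref{Kolitsch-k-color}, so my plan is to deduce it from Theorem \ref{Kolitsch-k-color} by a standard M\"obius inversion on the divisor lattice, rather than by revisiting the combinatorics of $k$-colored Frobenius symbols.

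First I would observe that the two right-hand sides of \eqref{Kolitsch-Dirichlet-inverse} coincide via the involution $\ell\mapsto k/\ell$ on the positive divisors of $k$, so it suffices to establish the second form $c\phi_k(n)=\sum_{\ell\mid k}\overline{c\phi}_{k/\ell}(n/\ell)$. Extending the convention $c\phi_j(x)=0$ for $x\notin\mathbf{Z}$ to $\overline{c\phi}_j$ as well (which is natural from its definition as a counting function), the sum $\sum_{\ell\mid k}$ may be replaced by $\sum_{\ell\mid(k,n)}$ without changing its value.

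Next I would substitute the identity
$$\overline{c\phi}_{k/\ell}(n/\ell)=\sum_{d\mid(k/\ell,\,n/\ell)}\mu(d)\,c\phi_{k/(d\ell)}(n/(d\ell))$$
from Theorem \ref{Kolitsch-k-color} into each summand, interchange the order of summation, and collect terms by the product $m=d\ell$, which ranges over the divisors of $(k,n)$. The coefficient of $c\phi_{k/m}(n/m)$ in the resulting double sum is $\sum_{d\mid m}\mu(d)$, which equals $1$ when $m=1$ and $0$ otherwise. Hence only the $m=1$ contribution survives, giving $c\phi_k(n)$, as desired.

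No genuine obstacle arises in this argument; it is a textbook M\"obius inversion. The only care required is in the bookkeeping of vanishing terms under the non-integer convention, which is what legitimizes the switch from $\sum_{\ell\mid k}$ to $\sum_{\ell\mid(k,n)}$ and makes the reindexing $m=d\ell$ transparent. One could alternatively seek a direct combinatorial proof, matching each $\sigma_k$-orbit of order $\ell$ on $k$-colored Frobenius symbols of $n$ with an object counted by $\overline{c\phi}_\ell$ at $n/(k/\ell)$, but this would add combinatorial complexity with no logical gain over the M\"obius-inversion route.
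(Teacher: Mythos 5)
Your M\"obius-inversion computation is internally correct: with the convention that $\overline{c\phi}_j(x)=0$ for $x\notin\mathbf Z$, substituting $\overline{c\phi}_{k/\ell}(n/\ell)=\sum_{d\mid(k/\ell,n/\ell)}\mu(d)\,c\phi_{k/(d\ell)}(n/(d\ell))$, reindexing by $m=d\ell$, and using $\sum_{d\mid m}\mu(d)=[m=1]$ does collapse the double sum to $c\phi_k(n)$. Indeed, Lemma \ref{Dirichlet} of the paper shows that \eqref{F-G} and \eqref{G-F} are formally equivalent, so either identity follows from the other by inversion.

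The problem is the direction in which you run the equivalence. In this paper, Theorem \ref{Kolitsch-k-color} is \emph{not} proved independently: the text states explicitly that \eqref{Kolitsch-Dirichlet} is established \emph{using} Theorem \ref{Kolitsch-k-color-inverse} (via Lemma \ref{Dirichlet}). So taking Theorem \ref{Kolitsch-k-color} as your input makes the argument circular within the paper's own logic; it is only rescued by appealing to Kolitsch's external proof in \cite{Kolitsch1989}, which outsources all of the actual content. The paper's proof goes the other way and is genuinely combinatorial: it shows that the order of any $k$-colored symbol under $\sigma_k$ divides $k$, so that $c\phi_k(n)=\sum_{\ell\mid k}\Psi_{k,\ell}(n)$, and then constructs an explicit bijection giving the refined identity $\Psi_{k,\ell}(n)=\overline{c\phi}_{\ell}\left(n/(k/\ell)\right)$, of which the theorem is the sum over $\ell$. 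This refinement is strictly stronger than the stated theorem and cannot be recovered by M\"obius inversion; moreover, its color-difference analogue \eqref{Kolitsch-Dirichlet-inverse2} is what drives the proof of Theorem \ref{Kolitsch-best} later in the section. To make your proposal self-contained you would need to supply an independent proof of Theorem \ref{Kolitsch-k-color} — which, in essence, requires the same bijection the paper builds.
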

We then establish \eqref{Kolitsch-Dirichlet} using Theorem \ref{Kolitsch-k-color-inverse}.
We will also take this opportunity to present Kolitsch's proof of \eqref{Kmain1} (see Theorem \ref{Kolitsch-best}). Our presentation of Kolitsch's proof contains more details than that given in \cite{KolitschDM1990}. We feel that it is important for us (and perhaps the reader)
to fully understand Koltisch's proof as it is an important congruence
and that it is essential in our proof of Theorem \ref{general-cong-thm}.

We now begin our proof of Theorem \ref{Kolitsch-k-color-inverse}.
\begin{proof}[Proof of Theorem \ref{Kolitsch-k-color-inverse}]
Every $k$-colored generalized Frobenius symbol has an order $\ell$ with respect to $\sigma_k$.
We first show that the order of a $k$-colored generalized Frobenius symbol $\lambda$ must divide $k$.
Suppose not. Let $m=ds$ be the order of $\lambda$ with $d=(m,k)$ and $s>1$.
Observe that $\sigma_k^d$ splits into a product of $d$ disjoint cycles $C_j$,
$1\leq j\leq d$, of length $k/d$. Since $(s,k/d)=1$,
$\left(\sigma_k^d\right)^s$ is again a product of $d$ disjoint cycles $C'_j$, $1\leq j\leq d$, and the integers in $C_j^s$
are the same as those in $C_j$.
Hence, if $\sigma_k^m$ leaves $\lambda$ invariant,
it would have been left invariant under $\sigma_k^d$ but this contradicts the minimality of $m$.
Therefore, the order of $\lambda$ must be a divisor of $k$ and we deduce that
$$c\phi_k(n)=\sum_{\ell|k} \Psi_{k,\ell}(n).$$
To prove \eqref{Kolitsch-Dirichlet-inverse}, it suffices to show that
\begin{equation}\label{K1}\Psi_{k,\ell}(n) = \overline{c\phi}_{\ell}\left(\frac{n}{(k/\ell)}\right).\end{equation}
For $\ell|k$, we know that
$\sigma_k^\ell$ splits into $\ell$ disjoint cycles $C_j$, $j=1,2,\cdots,\ell$ of length $k/\ell$.
Now, if $\lambda$ is a
$k$-colored generalized Frobenius symbol of order $\ell$, then it means that
if an entry $c_\nu(z)$, with $c_\nu$ appearing in $C_j$, appears in $\lambda$
then $c_\mu(z)$ must appear in $\lambda$ for every
color $c_\mu$ that appears in the cycle $C_j.$
We now replace all the colors in this cycle where $c_\nu$ belongs by the color represented by the smallest
integer, which can be chosen to be less than $\ell$. In this way, we will obtain a $\ell$-colored
generalized Frobenius symbol where each entry $c_j(z)$ appears $k/\ell$ times.
In other words, from
\begin{equation*} n = d +\sum_{i=1}^{d}c_{i}(z_i)
+ \sum_{i=1}^{d} c'_{i}(z'_i),\end{equation*}
we obtain a $\ell$-colored generalized Frobenius symbol giving rise the partition
\begin{equation*} n
=d+  \frac{k}{\ell}\left(\sum_{i=1}^{d/(k/\ell)}c_{j_i}(z_i) +  \sum_{i=1}^{d/(k/\ell)} c'_{j_i}(z'_i)\right),\end{equation*}
which implies that
\begin{equation*}\frac{n}{(k/\ell)}
=\dfrac{d}{(k/\ell)} +  \sum_{i=1}^{d/(k/\ell)}c_{j_i}(z_i) +  \sum_{i=1}^{d/(k/\ell)} c'_{j_i}(z'_i).\end{equation*}
We have thus constructed from $\lambda$ the $\ell$-colored generalized Frobenius symbol of $n/(k/\ell)$, which we denote as $\lambda^*$. We claim that $\lambda^*$ has order
$\ell$ with respect to $$\gamma=(1\quad 2\quad \cdots \quad \ell).$$
If $\lambda^*$ is of order $m$ less than $\ell$, then this means that
$$\gamma^m =\prod_{j=1}^{m} C'_j,$$
where each $C'_j$ is a $\ell/m$ cycle,
leaves  $\lambda^*$ invariant.
Since $m<\ell$, at least two of the integers $u$ and $v$ between $1$ and $\ell$ are in some cycle
$C'_j$. When we reverse the above process of obtaining $\ell$-colored generalized Frobenius symbol of $n/(k/\ell)$ from a $k$-colored generalized Frobenius symbol of $n$ of order $\ell$, we  would obtain a symbol $\lambda$ which is
 fixed by  a cycle that includes both $u$ and $v$. But $u$ and $v$ are in disjoint cycles in the decomposition of
$\sigma_k^\ell$ and this contradicts the fact that $\lambda$ has order $\ell$.
Hence,
$\lambda^*$ cannot have  order strictly less than $\ell$ and its order must be $\ell$.

Conversely, given a $\ell$-colored generalized Frobenius symbol of $n/(k/\ell)$ of order $\ell$ with respect to $\gamma$, we reverse
the process to obtain a $k$-colored generalized Frobenius symbol of $n$ of order $\ell$.
Hence, we have \eqref{K1} and the proof of Theorem \ref{Kolitsch-k-color-inverse} is complete.
\end{proof}

\medskip
Theorem \ref{Kolitsch-k-color} now follows from Theorem \ref{Kolitsch-k-color-inverse} by using the following lemma with
$F(n,k)=c\phi_k(n)$ and $G(n,k)=\overline{c\phi}_k(n)$:

\begin{lemma}\label{Dirichlet}
Let $F(n,k)$ and $G(n,k)$ be two-variable arithmetical functions. Then
\begin{align}\label{F-G}
F(n,k) = \sum_{\ell|(n,k)} G(n/\ell, k/\ell),
\end{align}
if and only if
\begin{align}\label{G-F}
G(n,k) = \sum_{\ell|(n,k)} \mu(\ell) F(n/\ell, k/\ell).
\end{align}
\end{lemma}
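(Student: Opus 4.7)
The plan is to prove Lemma \ref{Dirichlet} by the standard Möbius inversion argument, adapted to two variables. The only subtle point is bookkeeping the divisibility conditions, so I would write out the substitution carefully and then collapse the inner sum via $\sum_{d\mid m}\mu(d)=[m=1]$.

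For the forward direction, assume \eqref{F-G} holds. Substituting this expression for $F$ into the right side of \eqref{G-F} gives
\begin{align*}
\sum_{\ell\mid(n,k)}\mu(\ell)F(n/\ell,k/\ell)
&=\sum_{\ell\mid(n,k)}\mu(\ell)\sum_{d\mid(n/\ell,k/\ell)}G(n/(\ell d),k/(\ell d)).
\end{align*}
I would then set $m=\ell d$, and note that the joint conditions ``$\ell\mid(n,k)$ and $d\mid(n/\ell,k/\ell)$'' are equivalent to ``$m\mid(n,k)$ and $\ell\mid m$''. Interchanging the order of summation yields
\begin{align*}
\sum_{\ell\mid(n,k)}\mu(\ell)F(n/\ell,k/\ell)
&=\sum_{m\mid(n,k)}G(n/m,k/m)\sum_{\ell\mid m}\mu(\ell)
=G(n,k),
\end{align*}
since the inner sum equals $1$ when $m=1$ and $0$ otherwise.

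For the converse direction, assume \eqref{G-F} holds and substitute it into the right side of \eqref{F-G}; the same change of variable $m=\ell d$ and the same identity $\sum_{\ell\mid m}\mu(\ell)=[m=1]$ give
\begin{align*}
\sum_{\ell\mid(n,k)}G(n/\ell,k/\ell)
&=\sum_{m\mid(n,k)}F(n/m,k/m)\sum_{\ell\mid m}\mu(\ell)
=F(n,k).
\end{align*}
The main (and essentially only) obstacle is verifying that the substitution $m=\ell d$ correctly parametrizes the double-sum region ``$\ell\mid(n,k),\;d\mid(n/\ell,k/\ell)$'' as ``$m\mid(n,k),\;\ell\mid m$'', which is where the two-variable nature enters; once this equivalence is checked the argument is just the one-variable Möbius inversion applied to the gcd. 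No dependence on the structure of the specific functions $c\phi_k(n)$ or $\overline{c\phi}_k(n)$ is used, so the lemma applies immediately to them and yields Theorem \ref{Kolitsch-k-color} from Theorem \ref{Kolitsch-k-color-inverse}.
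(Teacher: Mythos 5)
Your proof is correct and is essentially the same argument as the paper's: both are the standard M\"obius inversion, resting on the identity $\sum_{\ell\mid m}\mu(\ell)=0$ for $m>1$. The only cosmetic difference is that the paper writes $n=dn'$, $k=dk'$ with $d=(n,k)$ and applies the one-variable M\"obius inversion formula to $m\mapsto F(mn',mk')$ and $m\mapsto G(mn',mk')$, whereas you expand the double sum directly; your verification that the region $\{\ell\mid(n,k),\ d\mid(n/\ell,k/\ell)\}$ is reparametrized by $\{m\mid(n,k),\ \ell\mid m\}$ (using $(n/\ell,k/\ell)=(n,k)/\ell$) is precisely the point the paper's reduction absorbs by noting that $(mn',mk')$ has gcd $m$.
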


\begin{proof}
To prove \eqref{G-F}, we set $n=dn'$ and $k=dk'$ where $d=(n,k)$.
From \eqref{F-G}, we
have
$$F(n'd, k'd) = \sum_{\ell|d} G(n'd/\ell, k'd/\ell).$$
Using the M\"obius inversion formula, we deduce that
$$G(n'd,k'd) = \sum_{\ell|d} \mu(\ell) F(n'd/\ell, k'd/\ell),$$ or
$$G(n,k)=
\sum_{\ell|d} \mu(\ell) F(n/\ell,k/\ell).$$

The converse follows in a similar way from the M\"obius inversion formula.
\end{proof}

\begin{rem}
We observe that using the above inversion, we can find an expression of M\"obius function in terms of Ramanujan's sum
$c_q(n).$ We will write Ramanujan's sum as $c(q,n)$. It is known that \cite[Section 8.3]{Apostol}
$$c(q,n) = \sum_{\ell|(q,n)} \mu(q/\ell) \ell.$$
Now, we observe that
$$\frac{c(q,n)}{(q,n)} = \sum_{\ell|(q,n)} \mu(q/\ell)\frac{\ell}{(q,n)}.$$
Using the inversion formula with
$$F(q,n) = \frac{c(q,n)}{(q,n)} \quad\text{and}\quad G(q,n) =\frac{\mu(q)}{(q,n)},$$ we deduce that
$$\frac{\mu(q)}{(q,n)} =\sum_{\ell|(q,n)} \frac{c(q/\ell,n/\ell)}{(q,n)}\ell\mu(\ell),$$
or
$${\mu(q)}=\sum_{\ell|(q,n)} c(q/\ell, n/\ell) \ell \mu(\ell).$$
\end{rem}

\begin{theorem}\label{Kolitsch-best} Let $k$ and $n$ be positive integers. Then
\begin{equation*} \overline{c\phi}_k(n) \equiv 0 \pmod{k^2}.\end{equation*}
\end{theorem}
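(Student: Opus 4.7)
My plan is to present Kolitsch's combinatorial proof from \cite{KolitschDM1990}. I would begin by identifying $\overline{c\phi}_k(n)$ with the cardinality of the set $S_k^*(n)$ of $k$-colored generalized Frobenius symbols of $n$ of order exactly $k$ with respect to $\sigma_k$. Since $\sigma_k$ acts on $S_k^*(n)$ with every orbit of size exactly $k$ (by definition of order $k$), we obtain $k \mid \overline{c\phi}_k(n)$ immediately, and the problem reduces to showing that the number of $\sigma_k$-orbits on $S_k^*(n)$ is itself divisible by $k$.

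To supply the remaining factor of $k$, I would introduce the commuting $(\mathbf{Z}/k\mathbf{Z}) \times (\mathbf{Z}/k\mathbf{Z})$-action on $k$-colored generalized Frobenius symbols in which the first factor cyclically shifts the colors appearing in the top row and the second shifts those in the bottom row, each followed by a re-sort within each fixed part value to restore the strict-decrease convention. The diagonal subgroup $\{(c,c):c\in\mathbf Z/k\mathbf Z\}$ agrees with $\langle\sigma_k\rangle$, so we obtain a well-defined induced $\mathbf Z/k\mathbf Z$-action on the set $S_k^*(n)/\langle\sigma_k\rangle$ of $\sigma_k$-orbits. The goal is to show that this induced action distributes the orbits into blocks whose total count is a multiple of $k$.

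The main obstacle is that the induced action is \emph{not} free in general. For instance, when $k=4$ one can exhibit a $\sigma_4$-orbit represented by a symbol whose top-row colors at some repeated part value form the $2$-invariant subset $\{1,3\}\subset\{1,2,3,4\}$; shifting only the top row by $2$ leaves the symbol unchanged, so the induced action has a non-trivial stabilizer on that orbit even though the order-$k$ hypothesis is satisfied. Kolitsch's resolution is to classify such \emph{defective} orbits according to the periodicity $d\mid k$ of the offending top-row color set, and to pair each defective class against the class of orbits possessing the complementary periodicity in the bottom row. The key input is the order-$k$ hypothesis, which rules out simultaneous non-trivial top- and bottom-row symmetries of the same periodicity; a careful stabilizer analysis then shows that each matched pair of defect classes contributes a multiple of $k$ to the orbit count.

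An alternative backup approach, should the explicit pairing prove unwieldy for composite $k$ with many divisors, is to combine M\"obius inversion (Theorem~\ref{Kolitsch-k-color}) with an induction on the number of prime factors of $k$, using Andrews' base case \eqref{A-eq} for prime $k$; this essentially reverses the logic appearing in the proof of Theorem~\ref{general-cong-thm} in Section~5 and would need a supplementary lemma controlling $p$-adic valuations of the cross-terms in the resulting recursion. I anticipate the combinatorial pairing to be the cleaner route, since it exhibits the $k^2$ structure intrinsically and makes transparent why the order-$k$ hypothesis is the precise condition under which the divisibility holds.
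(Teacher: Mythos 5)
Your opening step is sound: since every symbol counted by $\overline{c\phi}_k(n)$ has order exactly $k$, the group $\langle\sigma_k\rangle$ acts freely on this set, so $k\mid\overline{c\phi}_k(n)$. The gap is that everything after this point --- the entire second factor of $k$ --- is not actually proved. You introduce the $(\mathbf Z/k\mathbf Z)\times(\mathbf Z/k\mathbf Z)$ row-shift action, correctly observe that the induced $\mathbf Z/k\mathbf Z$-action on the set of $\sigma_k$-orbits fails to be free, and then appeal to an unspecified ``classification of defective orbits'' and a ``pairing against complementary periodicities'' whose existence and counting properties you never establish. The order-$k$ hypothesis only forbids a \emph{simultaneous} nontrivial stabilizer of the diagonal form $(c,c)$; it does not force the contributions of the various non-diagonal stabilizers to assemble into multiples of $k$, and no Burnside-type computation is offered. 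This is precisely where the content of the theorem lies, so the proposal does not constitute a proof. Note also that shifting only the top row by $c$ changes the color difference by $dc\bmod k$, where $d$ is the number of columns, so when $k\mid d$ your action does not even move a symbol out of its color-difference residue class; any pairing argument must confront this dependence on $d$.

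The paper's proof (following Kolitsch) obtains the second factor by a different mechanism, which you would need to reproduce or replace: it refines the count by the color difference $m$, encodes the generating function of $c\phi_k(m,n)$ as the constant term \eqref{Gen-CT}, applies M\"obius inversion \eqref{Kolitsch-Dirichlet2}, and evaluates at $k$-th roots of unity. A cancellation between the divisors $d$ and $pd$ in the M\"obius sum (Lemma \ref{imL}) shows that $P_n(t)=\sum_{j}c_k(j,n)t^j$ vanishes at every nontrivial $k$-th root of unity, whence the $c_k(j,n)$ are all equal and $\overline{c\phi}_k(n)=k\,c_k(0,n)$; the free-orbit argument is then applied only to the symbols with color difference $\equiv 0\pmod k$ to get $k\mid c_k(0,n)$. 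Your backup route is also not viable as stated: Andrews' congruence \eqref{A-eq} gives only the prime case, the paper derives Theorem \ref{general-cong-thm} \emph{from} \eqref{K-eq} rather than the reverse (so reversing that logic is circular), and for $k=pq$ or $k=p^2$ the required divisibility of the M\"obius-inverted combination by $k^2$ is strictly stronger than anything the prime case plus induction on the number of prime factors yields.
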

We will next prove Theorem \ref{Kolitsch-best}.
\begin{proof}[Proof of Theorem \ref{Kolitsch-best}]
Given a $k$-colored generalized Frobenius symbol $\lambda$ represented by \eqref{pi-symbol}, we say that the color difference of $\lambda$ is $m$ when $m$ is the
sum of the numerical values of the colors on the first row minus the sum of the numerical values of  the colors on the second row of $\lambda$.  In other words,
\[m=c_1+c_2+\cdots +c_d-(c_1'+c_2'+\cdots +c_d').\]
Let $\overline{c\phi}_k(m,n)$ denote the number of $k$-colored generalized Frobenius symbol $\lambda$ of $n$ with color difference $m$ and order $k$. Let $c\phi_k(m,n)$ denote the number of $k$-colored generalized Frobenius symbol $\lambda$ of $n$ with color difference $m$. These functions satisfy the following analogue of \eqref{Kolitsch-Dirichlet-inverse}:
\begin{equation}\label{Kolitsch-Dirichlet-inverse2}
c\phi_k(m,n) = \sum_{\ell|k}\overline{c\phi}_{\ell}\left(\frac{m}{k/\ell},\frac{n}{k/\ell}\right).
\end{equation}

The proof of \eqref{Kolitsch-Dirichlet-inverse2} is the same as \eqref{Kolitsch-Dirichlet-inverse} by checking that there is an one to one correspondence between a $k$-colored generalized Frobenius symbol of $n$ with color difference $m$ and order $\ell$ and a $\ell$-colored
generalized Frobenius symbol of $n/(k/\ell)$ with color difference $m/(k/\ell)$ and order $\ell$.
The only additional step we need to observe is that under our previous construction, when we replace the $k$-colored generalized Frobenius symbol $\lambda$ of $n$ with a $k$-colored generalized Frobenius symbol $\lambda^\dagger$ with only colors $j$ with $1\leq j\leq \ell$ (by identifying colors belong to the cycle containing $j$), the
color difference of $\lambda^\dagger$
becomes  $m/(k/\ell)$. This is because if a color $j$ appears in $\lambda$, then the rest of the colors belonging to the
cycle containing $j$ are of the form $j+w\ell$, $1\leq w< k/\ell$.

Using inversion formula
similar to Lemma \ref{Dirichlet} with two-variable arithmetical functions replaced by three-variable arithmetical functions, we deduce
 from \eqref{Kolitsch-Dirichlet-inverse2} that
\begin{equation}\label{Kolitsch-Dirichlet2}
\overline{c\phi}_k(m,n) = \sum_{\ell|k}\mu(\ell)c\phi_{k/\ell}\left(\frac{m}{\ell},\frac{n}{\ell}\right).
\end{equation}

Now, the function
$$\sum_{m=-\infty}^\infty \sum_{n=0}^\infty c\phi_k(m,n)t^mq^n$$
is the constant term, i.e., coefficient of $z^0$ of the function
$$\prod_{j=1}^k (zt^jq;q)_\infty (z^{-1}t^{-j};q)_\infty,$$
which we shall write as
\begin{equation}\label{Gen-CT}\sum_{m=-\infty}^\infty \sum_{n=0}^\infty c\phi_k(m,n)t^mq^n=\text{CT}\left(\prod_{j=1}^k (zt^jq;q)_\infty (z^{-1}t^{-j};q)_\infty\right).\end{equation}
See \cite[pp. 4--6, Theorems 5.1 and 5.2]{Andrews} for examples of expressing generating functions of various partitions functions as constant term of infinite products involving $z$.

From \eqref{Kolitsch-Dirichlet2} and \eqref{Gen-CT}, we deduce that
\begin{align}\sum_{m=-\infty}^\infty\sum_{n=0}^\infty \overline{c\phi}_k(m,n)t^mq^n
&=\sum_{\ell|k}\mu(\ell) \sum_{m=-\infty}^\infty \sum_{n=0}^\infty c\phi_{k/\ell}\left(\frac{m}{\ell},\frac{n}{\ell}\right)t^mq^n\notag\\
&=\sum_{\ell|k}\mu(\ell) \sum_{m=-\infty}^\infty \sum_{n=0}^\infty c\phi_{k/\ell}\left(m,n\right)t^{\ell m}q^{\ell n}\notag\\
&=\sum_{\ell|k}\mu(\ell)\text{CT}\left(\prod_{j=1}^{k/\ell} (zt^{\ell j}q^\ell;q^\ell)_\infty (z^{-1}t^{-\ell j};q^\ell)_\infty\right)\notag\\
&=\sum_{\ell|k}\mu(\ell)\text{CT}\left(\prod_{j=1}^{k/\ell} (z^\ell t^{\ell j}q^\ell;q^\ell)_\infty (z^{-\ell}t^{-\ell j};q^\ell)_\infty\right),
\label{CT1}
\end{align}
where the last equality follows from the fact that
\eqref{Gen-CT} holds with  $z$ replaced by $z^a$ for any positive integer $a$.

Next, we rewrite the left hand side of \eqref{Gen-CT} as
\begin{align}
\sum_{m=-\infty}^\infty\sum_{n=0}^\infty \overline{c\phi}_k(m,n)t^mq^n &=\sum_{j=0}^{k-1}\sum_{s=-\infty}^\infty{\sum_{n=0}^{\infty} }\overline{c\phi}_k(sk+j,n)t^{sk+j}
q^n \notag \\
&=\sum_{n=0}^{\infty} \left(\sum_{j=0}^{k-1} \sum_{s=-\infty}^\infty \overline{c\phi}_k(sk+j,n) t^{sk+j}\right) q^n.\label{alt1}
\end{align}
Let $$c_k(j,n) = \sum_{\substack{m=-\infty \\ m\equiv j\!\!\!\!\pmod{k}}}^\infty\overline{c\phi}_k(m,n).$$
Let $t=1$ in \eqref{alt1}.   Note that
\[\overline{c\phi}_{k}(n)=\sum_{m=-\infty}^{\infty}\overline{c\phi}_{k}(m,n).\]
We find that
\begin{equation}\label{Firsteq}\sum_{j=0}^{k-1}c_k(j,n) = \overline{c\phi}_k(n).\end{equation}

Next, if $t=\zeta\neq 1$ is a primitive $r$-th root of unity with $r |k$, then from \eqref{alt1}, we deduce that
\begin{equation}\label{add-eq-ckj}
\sum_{m=-\infty}^\infty\sum_{n=0}^\infty \overline{c\phi}_k(m,n)\zeta^mq^n
=\sum_{n=0}^\infty \sum_{j=0}^{k-1}c_k(j,n )\zeta^j q^n.
\end{equation}

To complete the proof of \eqref{Kmain1}, we need the following lemma:

\begin{lemma} \label{imL} Let $\zeta_k$ be a primitive $k$-th root of unity.
Then $\zeta_k^s$ is a root of
$$P_{n}(t):=\sum_{j=0}^{k-1}c_k(j,n)t^j$$
for all $1\leq s\leq k-1$.
\end{lemma}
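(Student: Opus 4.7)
The plan is to work directly with the two-variable generating function $F(t, q) = \sum_{m \in \mathbf Z} \sum_{n \ge 0} \overline{c\phi}_k(m, n) t^m q^n$ and to establish $F(\zeta_k^s, q) = 0$ for every $s$ with $1 \le s \le k-1$. By \eqref{add-eq-ckj} this is equivalent to $P_n(\zeta_k^s) = 0$ for all $n \ge 0$, which is exactly the assertion of the lemma.

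Starting from the constant-term formula \eqref{CT1},
$$F(t,q) = \sum_{\ell \mid k} \mu(\ell)\,\text{CT}_z \prod_{j=1}^{k/\ell}(z^\ell t^{\ell j}q^\ell;q^\ell)_\infty(z^{-\ell}t^{-\ell j};q^\ell)_\infty,$$
I would specialize $t = \zeta_k^s$ and simplify each summand. Writing $d = k/\ell$ and $g = \gcd(d, s)$, the quotient $e = d/g$ is precisely the multiplicative order of $\zeta_k^{\ell s}$, so as $j$ runs through $1, 2, \ldots, d$, the powers $\zeta_k^{\ell s j}$ sweep through each $e$-th root of unity exactly $g$ times. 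Applying the elementary identity
$$\prod_{i=0}^{e-1}(\omega^i x;\, q^\ell)_\infty = (x^e;\, q^{\ell e})_\infty$$
(for $\omega$ a primitive $e$-th root of unity, following from $\prod_{i=0}^{e-1}(1 - \omega^i y) = 1 - y^e$), the $\ell$-th product collapses to $\bigl[(z^{k/g}q^{k/g};q^{k/g})_\infty(z^{-k/g};q^{k/g})_\infty\bigr]^g$, which depends on $\ell$ only through $g = \gcd(k/\ell, s)$.

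Next I would expand the bracketed factor via the Jacobi triple product identity and raise it to the $g$-th power; extracting the constant term in $z$ picks out integer tuples $(n_1, \ldots, n_g)$ with $n_1 + \cdots + n_g = 0$. Substituting $n_g = -(n_1 + \cdots + n_{g-1})$ reduces the exponent of $q^{k/g}$ to the quadratic form $Q(n_1, \ldots, n_{g-1})$ from \eqref{Q}, and the resulting series is recognised as $\mathrm{C}\Phi_g(q^{k/g})$. Collecting terms by $g$ yields
$$F(\zeta_k^s,q) = \sum_{g \mid \gcd(k,s)} \mathrm{C}\Phi_g(q^{k/g}) \sum_{\substack{\ell \mid k \\ \gcd(k/\ell,s)=g}} \mu(\ell),$$
so the lemma reduces to showing that the inner Möbius sum vanishes for every $g \mid \gcd(k, s)$ whenever $1 \le s \le k-1$. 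Setting $e = k/\ell$, $K = k/g$, $S = s/g$ and decomposing $K = K_1 K_2$ with $K_1$ the largest divisor of $K$ coprime to $S$, the multiplicativity of $\mu$ evaluates the sum as $\mu(K_2)\cdot [K_1 = 1]$. If $K_1 = 1$ then every prime of $K$ divides $S$; combined with $1 \le S < K$ this rules out $K$ being squarefree (otherwise $K \mid S$ would follow), so $\mu(K_2) = \mu(K) = 0$. In either case the Möbius sum is zero and $F(\zeta_k^s, q) = 0$.

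The main obstacle I anticipate is the collapsing identity in the second paragraph: one must track carefully how the $d$ powers $\zeta_k^{\ell s j}$ distribute across the cyclic subgroup of order $e$, and confirm that the boundary cases (in particular $e = 1$, where the product becomes $(z^\ell q^\ell;q^\ell)_\infty^d$ with $g = d$) fit the same formula. Once the collapsed form is in hand, the Jacobi triple product calculation and the concluding Möbius-sum reduction are both routine.
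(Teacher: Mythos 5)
Your argument is correct, and it coincides with the paper's proof through its main step: both specialize $t$ to a root of unity in the constant-term formula \eqref{CT1} and collapse the product over $j$ via $\prod_{i=0}^{e-1}(1-\omega^i y)=1-y^e$, so that the $\ell$-th summand depends on $\ell$ only through $\gcd(k/\ell,s)$. Where you genuinely diverge is in how the M\"obius cancellation is finished. The paper fixes a prime $p$ dividing the order $r$ of $\zeta_k^s$, restricts to squarefree $\ell$, and pairs the divisor $d$ (with $p\nmid d$) against $pd$, using the factorization $d=ww'$ with $w\mid r$, $(w',r)=1$ to check that the two constant terms agree while $\mu(pd)=-\mu(d)$; it never evaluates any M\"obius sum. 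You instead group the divisors by the common value $g=\gcd(k/\ell,s)$ and compute $\sum_{\gcd(k/\ell,s)=g}\mu(\ell)$ in closed form. I checked your reduction: with $K=k/g$, $S=s/g$ and $f=(k/\ell)/g$ the sum becomes $\sum_{f\mid K,\ \gcd(f,S)=1}\mu(K/f)=\mu(K_2)\sum_{f\mid K_1}\mu(K_1/f)=\mu(K_2)[K_1=1]$, and in the case $K_1=1$ the hypothesis $1\le s\le k-1$ gives $1\le S<K$, so a squarefree $K$ would satisfy $K\mid S$, a contradiction; hence $\mu(K_2)=\mu(K)=0$. This is all sound (including the boundary case $e=1$, where $g=k/\ell$ and the collapsed form is consistent). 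Your route is slightly longer but buys a cleaner intermediate identity, namely $F(\zeta_k^s,q)=\sum_{g\mid\gcd(k,s)}\mathrm{C}\Phi_g(q^{k/g})\sum_{\gcd(k/\ell,s)=g}\mu(\ell)$, which identifies each collapsed constant term as a generalized Frobenius generating function rather than leaving it as an unevaluated product; the paper's pairing is the more economical way to get vanishing alone.
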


Assuming that Lemma \ref{imL} is true.  It would imply that $P_{n}(t)$ is divisible by $Q(t)=1+t+\cdots +t^{k-1}$ and
since the degrees of $P_{n}(t)$ and $Q(t)$ are the same, we must conclude that
$c_k(j,n)=c_k(0,n)$ are all equal for $1\leq j\leq k-1$. From \eqref{Firsteq}, we conclude that
$$\overline{c\phi}_k(n) = kc_k(0,n).$$ Let
$S_0$ be the set of $k$-colored generalized Frobenius symbols of $n$ of order $k$ with color difference divisible by $k$. Note that $|S_0|=c_k(0,n)$.
If $\pi\in S_0$ then $\pi$ under the action of $\sigma_k^j, 1\leq j\leq k-1$ is also in $S_0$ since {  the residue of} the color difference is invariant {  modulo $k$} under the action of $\sigma$ and the order of $\pi$ is $k$. This implies that $S_0$
  can be grouped into disjoint sets containing $k$ elements in each set, which implies that $k$ divides $c_k(0,n)$. Therefore,
$$\overline{c\phi}_k(n) \equiv 0 \pmod{k^2}$$
and this completes the proof of \eqref{Kmain1}.
\end{proof}

It remains to prove Lemma \ref{imL}.
\begin{proof}[Proof of Lemma \ref{imL}]
Given any integer $j$ between 1 and $k-1$, there exists an integer $r|k$ such that $\zeta_{k}^{j}$ is a primitive $r$-th root of unity.
Therefore, to prove Lemma \ref{imL},
it suffices to prove that $P_{n}(\zeta)=0$ for any primitive $r$-th root of unity with $r|k$.
From \eqref{CT1}  and \eqref{add-eq-ckj},
we deduce that
\begin{equation}\label{sum-mu}
\sum_{n=0}^{\infty}P_{n}(\zeta)q^n=\sum_{\ell|k}\mu(\ell)\text{CT}\left(\prod_{j=1}^{k/\ell} (z^\ell \zeta^{\ell j}q^\ell;q^\ell)_\infty (z^{-\ell}\zeta^{-\ell j};q^\ell)_\infty\right).
\end{equation}
The presence of the factor $\mu(\ell)$ in \eqref{sum-mu} shows that we only need to consider divisors of the squarefree part of $k$.
Fix a prime $p$ which divides $r$ and separate the sum in \eqref{sum-mu} into a sum over divisors of the form $d$ where $(p,d)=1$ and
a sum over divisors of the form $pd$.
We only need to show that the term corresponding to $d$ cancels with the term corresponding to $pd.$

Observe that since $d$ is squarefree and $(d,p)=1$, we can write $d=ww'$ where $w|r$ and $(w',r)=1$.
Note that the term corresponding to $d=ww'$ is
\begin{align*}&\text{CT}\left(\mu(ww')\prod_{j=1}^{k/(ww')} (z^{ww'} \zeta^{ww' j}q^{ww'};q^{ww'})_\infty (z^{-ww'}\zeta^{-ww' j};q^{ww'})_\infty\right)\\
&=\mu(ww')\text{CT}\left((z^{rw'} q^{rw'};q^{rw'})_\infty^{k/(rw')} (z^{-rw'};q^{rw'})_\infty^{k/(rw')}\right)\end{align*}
since $\zeta^w$ is a $r/w$-th primitive root of unity and
$$\prod_{j=0}^{\nu} (1-z\zeta_\nu^j) = (1-z^\nu).$$
Similarly, the term corresponding to $pd=pww'$ is
\begin{align*}&\text{CT}\left(\mu(pww')\prod_{j=1}^{k/(pww')} (z^{pww'} \zeta^{pww' j}q^{pww'};q^{pww'})_\infty (z^{-pww'}\zeta^{-pww' j};q^{pww'})_\infty\right)\\
&=\mu(pww')\text{CT}\left((z^{rw'} q^{rw'};q^{rw'})_\infty^{k/(rw')} (z^{-rw'};q^{rw'})_\infty^{k/(rw')}\right).
\end{align*}
Clearly these two terms cancel as $\mu(pww')=-\mu(ww')$.
This completes the proof of the lemma.
\end{proof}

\noindent {\it Acknowledgement.} We would like to express our sincere thanks to
 Professor George E. Andrews for his encouragement and strong support of
 our project.
This work is completed during the first author's stay at the Faculty
of Mathematics, University of Vienna. The first author would like to
thank his host Professor C.~Krattenthaler for his hospitality and for
providing an excellent research environment during his stay in Vienna.

The second author was supported by the National Natural Science Foundation of China (11801424), the Fundamental Research Funds for the Central Universities (Grant No.\ 1301--413000053) and a start-up research grant (No.\ 1301--413100048) of the Wuhan University. The third author is partially supported by Grant 102-2115-M-009гн001-MY4 of the Ministry of Science and Technology,  Taiwan (R.O.C.)

\medskip

\section*{Corrigendum after publication}
This paper has been published as \cite{CWY-TAMS}. On May 24 and 25, 2021, Dr.\ Dazhao Tang informed the second author that there might be some typos in equations \eqref{cphi-12-formula} and \eqref{correct-cphi-16} in the published version \cite{CWY-TAMS}. The second author then checked again by \emph{Mathematica} and found the following typos:
\begin{enumerate}
\item In Theorem \ref{c-phi-12}, i.e., equation \eqref{cphi-12-formula}, the coefficient for $B_{12,11}$ should be $-\frac{582717}{16000}$, and the coefficient for $B_{12,16}$ should be $\frac{294109}{500}$. In the published version \cite{CWY-TAMS}, these were wrongly typed as $-\frac{582717}{4000}$ and $\frac{1179561}{4000}$.
\bigskip

\item In Theorem \ref{add-thm-cphi-16}, i.e., equation \eqref{correct-cphi-16}, the coefficient for $B_{16,23}$ should be $1914696$.  In the published version \cite{CWY-TAMS}, this was wrongly typed as $122540544$. By the way, the power of $(q;q)_\infty$ should be $16$ instead of $15$.
\end{enumerate}

We sincerely thank Dr.\ Dazhao Tang for pointing out these typos.

In addition, on page 16, we find that in the definition of $g_{17,2}(\tau)$, the factor $\frac{1}{4}$ was missing in the published version \cite{CWY-TAMS}.

\end{document}